\DeclareFontFamily{U}{shuffle}{}
\DeclareFontShape{U}{shuffle}{m}{n}{ <-8>shuffle7 <8->shuffle10}{}
\newcommand{\nc}{\newcommand}
\nc{\AMZV}{\mathsf {AMZV}}
\nc{\ud}{\mathrm{d}}
\nc{\ES}{\mathsf {ES}}
\nc{\MZV}{\mathsf {MZV}}
\nc{\MtV}{\mathsf {MtV}}
\nc{\MTV}{\mathsf {MTV}}
\nc{\MSV}{\mathsf {MSV}}
\nc{\MMV}{\mathsf {MMV}}
\nc{\MMVo}{\mathsf {MMVo}}
\nc{\MMVe}{\mathsf {MMVe}}
\nc{\AMMV}{\mathsf {AMMV}}
\nc{\AMTV}{\mathsf {AMTV}}
\nc{\AMtV}{\mathsf {AMtV}}
\nc{\AMSV}{\mathsf {AMSV}}
\nc{\CMZV}{\mathsf {CMZV}}
\nc{\sha}{\shuffle}
\nc{\cst}{\rotatebox[origin=c]{180}{$\sha$}}
\nc{\cstt}{\rotatebox[origin=c]{180}{$\scriptstyle \sha$}}
\nc{\de}{\delta}
\nc{\DD}{{\mathbb D}}
\nc{\anbb}[1]{\left\langle#1\right\rangle}
\nc{\bibb}[1]{\left\{#1\right\}}
\nc{\mibb}[1]{\left[#1\right]}
\nc{\smbb}[1]{\left(#1\right)}
\nc{\doubb}[1]{\llbracket#1\rrbracket}
\nc{\dm}[1]{\left|#1\right|}
\nc{\Gbinom}[2]{\genfrac{(}{)}{0mm}{0}{#1}{#2}}
\nc{\gbinom}[2]{\genfrac{(}{)}{0mm}{1}{#1}{#2}}
\nc{\Rbinom}[2]{\genfrac{\langle}{\rangle}{0mm}{0}{#1}{#2}}
\nc{\rbinom}[2]{\genfrac{\langle}{\rangle}{0mm}{1}{#1}{#2}}
\nc{\Qbinom}[2]{\genfrac{[}{]}{0mm}{0}{#1}{#2}_q}
\nc{\qbinom}[2]{\genfrac{[}{]}{0mm}{1}{#1}{#2}_q}
\nc{\binq}[2]{\genfrac{[}{]}{0mm}{0}{#1}{#2}}
\nc{\tbnq}[2]{\genfrac{[}{]}{0mm}{1}{#1}{#2}}
\nc{\cinq}[2]{\genfrac{\{}{\}}{0mm}{0}{#1}{#2}}
\nc{\tcnq}[2]{\genfrac{\{}{\}}{0mm}{1}{#1}{#2}}
\nc{\mfrac}[2]{\genfrac{}{}{0pt}{}{#1}{#2}}
\nc{\tf}{\tfrac}
\nc{\db}{{\mathbb D}}
\nc{\bara}{{\bar{a}}}
\nc{\barb}{{\bar{b}}}
\nc{\ta}{{\tilde{a}}}
\nc{\dk}{{\mathbb K}}
\nc{\ola}{\overleftarrow}
\nc{\ora}{\overrightarrow}
\nc{\lra}{\longrightarrow}
\nc{\Lra}{\Longrightarrow}
\nc\Res{{\rm Res}}
\nc\setX{{\mathsf{X}}}
\nc\fA{{\mathfrak{A}}}
\nc\evaM{{\texttt{M}}}
\nc\evaML{{\text{\em{\texttt{M}}}}}
\nc\z{{\texttt{z}}}
\nc\emz{\emph{\texttt{z}}}
\nc\tx{{\texttt{x}}}
\nc\txp{{\tx_1}} % textstyle x positive 1
\nc\txn{{\tx_{-1}}} % textstyle x negative 1
\nc\neo{{1}}
\nc{\yi}{{1}}
\nc\one{{-1}}
\nc\gD{{\Delta}}
\nc\eps{{\varepsilon}}
\nc{\bfe}{{\boldsymbol{\sl{e}}}}
\nc{\bfi}{{\boldsymbol{\sl{i}}}}
\nc{\bfj}{{\boldsymbol{\sl{j}}}}
\nc{\bfk}{{\boldsymbol{\sl{k}}}}
\nc{\bfl}{{\boldsymbol{\sl{l}}}}
\nc{\bfm}{{\boldsymbol{\sl{m}}}}
\nc{\bfn}{{\boldsymbol{\sl{n}}}}
\nc{\bfs}{{\boldsymbol{\sl{s}}}}
\nc{\bft}{{\boldsymbol{\sl{t}}}}
\nc{\bfx}{{\boldsymbol{\sl{x}}}}
\nc{\bfz}{{\boldsymbol{\sl{z}}}}
\nc{\bfq}{{\boldsymbol{\sl{q}}}}
\nc{\bfp}{{\boldsymbol{\sl{p}}}}
\nc\bfgs{{\boldsymbol \gs}}
\nc\bfgl{{\boldsymbol \lambda}}
\nc\bfsi{{\boldsymbol \gs}}
\nc\bfet{{\boldsymbol \eta}}
\nc\bfeta{{\boldsymbol \eta}}
\nc\bfeps{{\boldsymbol \eps}}
\nc\bfga{{\boldsymbol \ga}}
\nc\bfgb{{\boldsymbol \gb}}
\nc\mmu{{\boldsymbol \mu}}
\nc\bfone{{\bf 1}}
\nc{\myone}{{1}}
 \nc{\calA}{{\mathcal A}}
 \nc{\calB}{{\mathcal B}}
 \nc{\calC}{{\mathcal C}}
 \nc{\calD}{{\mathcal D}}
 \nc{\calE}{{\mathcal E}}
 \nc{\calF}{{\mathcal F}}
 \nc{\calG}{{\mathcal G}}
 \nc{\calH}{{\mathcal H}}
 \nc{\calI}{{\mathcal I}}
 \nc{\calJ}{{\mathcal J}}
 \nc{\calK}{{\mathcal K}}
 \nc{\calL}{{\mathcal L}}
 \nc{\calM}{{\mathcal M}}
 \nc{\calN}{{\mathcal N}}
 \nc{\calO}{{\mathcal O}}
 \nc{\calP}{{\mathcal P}}
 \nc{\calQ}{{\mathcal Q}}
 \nc{\calR}{{\mathcal R}}
 \nc{\calS}{{\mathcal S}}
 \nc{\calT}{{\mathcal T}}
 \nc{\calU}{{\mathcal U}}
 \nc{\calV}{{\mathcal V}}
 \nc{\calW}{{\mathcal W}}
 \nc{\calX}{{\mathcal X}}
 \nc{\calY}{{\mathcal Y}}
 \nc{\calZ}{{\mathcal Z}}
 \nc{\cala}{{\mathcal a}}
 \nc{\calb}{{\mathcal b}}
 \nc{\calc}{{\mathcal c}}
 \nc{\cald}{{\mathcal d}}
 \nc{\cale}{{\mathcal e}}
 \nc{\calf}{{\mathcal f}}
 \nc{\calg}{{\mathcal g}}
 \nc{\calh}{{\mathcal h}}
 \nc{\cali}{{\mathcal i}}
 \nc{\calj}{{\mathcal j}}
 \nc{\calk}{{\mathcal k}}
 \nc{\call}{{\mathcal l}}
 \nc{\calm}{{\mathcal m}}
 \nc{\caln}{{\mathcal n}}
 \nc{\calo}{{\mathcal o}}
 \nc{\calp}{{\mathsf p}}
 \nc{\calq}{{\mathcal q}}
 \nc{\calr}{{\mathcal r}}
 \nc{\cals}{{\mathcal s}}
 \nc{\calt}{{\mathcal t}}
 \nc{\calu}{{\mathcal u}}
 \nc{\calv}{{\mathcal v}}
 \nc{\calw}{{\mathcal w}}
 \nc{\calx}{{\mathcal x}}
 \nc{\caly}{{\mathcal y}}
 \nc{\calz}{{\mathcal z}}
 \nc{\ot}{{\otimes}}
\def\int{\displaystyle\!int}
\def\lim{\displaystyle\!lim}
\def\sum{\displaystyle\!sum}
\def\sup{\displaystyle\!sup}
\def\inf{\displaystyle\!inf}
\def\cap{\displaystyle\!cap}
\def\max{\displaystyle\!max}
\def\min{\displaystyle\!min}
\def\frac{\displaystyle\!frac}
\nc{\gam}{{\gamma}}
\nc{\gG}{{\Gamma}}
\nc{\om}{{\omega}}
\nc{\vep}{{\varepsilon}}
\nc{\ga}{{\alpha}}
\nc{\gd}{{\delta}}
\nc{\gl}{{\lambda}}
\nc{\gb}{{\beta}}
\nc{\gf}{{\varphi}}
\nc{\gs}{{\sigma}}
\nc{\gk}{{\kappa}}
\nc{\gS}{\Sigma}
\let\oldsection\section
\renewcommand\section{\setcounter{equation}{0}\oldsection}
\DeclareMathOperator*{\dep}{dep}
\DeclareMathOperator{\dch}{\texttt{dch}}
\nc\UU{\mbox{\bfseries U}}
\nc\FF{\mbox{\bfseries \itshape F}}
\nc\h{\mbox{\bfseries \itshape h}}\nc\dd{\mbox{d}}
\nc\g{\mbox{\bfseries \itshape g}}
\nc\xx{\mbox{\bfseries \itshape x}}
\nc\tz{\tilde\zeta}
\def\PP{\mathbb{P}}
\def\N{\mathbb{N}}
\def\Z{\mathbb{Z}}
\def\Q{\mathbb{Q}}
\def\xx{\left(\frac{1-x}{1+x} \right)}
\def\ol{\overline}
\nc\divg{{\text{div}}}
\theoremstyle{plain}
\newtheorem{thm}{Theorem}[section]
\newtheorem{lem}[thm]{Lemma}
\newtheorem{cor}[thm]{Corollary}
\newtheorem{conj}[thm]{Conjecture}
\newtheorem{prop}[thm]{Proposition}
\newtheorem{prob}[thm]{Problem}
\theoremstyle{definition}
\newtheorem{rem}[thm]{Remark}
\nc{\cicc}[1]{{}_{{}^{ \bigcirc\hskip-1.2ex{#1}\hskip.3ex{}}}}
\nc{\cic}[1]{{}^{\bigcirc\hskip-1.15ex{\raisebox{-0.015cm}{\text{$\scriptscriptstyle #1$}}}\hskip.25ex{}}}
\nc{\ccic}[1]{{}^{\bigcirc\hskip-1.5ex{\raisebox{-0.015cm}{\text{$\scriptscriptstyle #1$}}}\hskip.25ex{}}}
\nc{\ncic}[1]{ {\bigcirc\hskip-1.6ex{\raisebox{-0.0cm}{\text{$\scriptstyle #1$}}}\hskip.25ex{}}}
\nc{\nncic}[1]{ {\bigcirc\hskip-2ex{\raisebox{-0.0cm}{\text{$\scriptstyle #1$}}}\hskip.25ex{}}}
\nc{\cci}[1]{{}_{{}^{ {\textstyle \bigcirc}\hskip-2.05ex{#1}\hskip-.35ex{}}}}
\nc{\ccicc}[1]{{}_{{}^{ {\textstyle \bigcirc}\hskip-1.55ex{#1}\hskip-0.1ex{}}}}
\nc{\x}{\rm{x}}
\nc{\tworow}[2]{\left(#1 \atop #2\right)}
\nc{\fl}{{\mathfrak l}}
\nc{\fm}{{\mathfrak m}}
\begin{document}
%%%%%%%%%%%%%%%%%%%% title %%%%%%%%%%%%%%%%%%%%%%%%%%%%%%%%%%%%%%%%%%%%%%%%
\title{\bf Ramified and Unramified Motivic Multiple $t$-, $T$- and $S$-Values}
\author{
{Ce Xu${}^{a,}$\thanks{Email: cexu2020@ahnu.edu.cn}\ \ and Jianqiang Zhao${}^{b,}$\thanks{Email: zhaoj@ihes.fr}}\\[1mm]
\small a. School of Mathematics and Statistics, Anhui Normal University, Wuhu 241002, PRC\\
\small b. Department of Mathematics, The Bishop's School, La Jolla, CA 92037, USA
%\\[5mm]\emph{\normalsize Dedicated to Professor Masanobu Kaneko on the occasion of his 65th birthday}
}

\date{}
\maketitle
\noindent{\bf Abstract.} In this paper, we consider several variants of motivic multiple zeta values of level two by restricting the summation indices to fixed parity patterns. These variants include Hoffman's multiple $t$-values, Kaneko and Tsumura's multiple $T$-values, and the multiple $S$-values previously studied by the authors. By applying the descent theory of Brown and Glanois to the motivic versions of these values, we derive criteria for determining when they are ramified or unramified. Assuming Grothendieck's period conjecture, our results partially confirm a conjecture by Kaneko and Tsumura regarding the unramified nature of multiple $T$-values of depth less than four. We obtain similar results for motivic multiple $S$-values. Furthermore, we generalize a result of Charlton to broader families of unramified multiple $t$-values with unit components. Finally, we propose several open problems for future research.

\medskip \noindent{\bf Keywords}: motivic iterated integrals; (motivic) multiple zeta values; (motivic) Euler sums; (motivic) multiple $T$-values; (motivic) multiple $t$-values; (motivic) multiple $S$-values.

\medskip \noindent{\bf AMS Subject Classifications (2020):} 11M32, 11M99, 16T05, 33B30.

%\setcounter{section}{-1}
%\tableofcontents

\section{Introduction}
Let $\N$ be the set of positive integers and $\N_0=\N\cup\{0\}$. For any $d\in\N$ and $(k_1,\dots,k_d)\in\N^d$ we
define the \emph{multiple zeta values} (MZVs) by
\begin{equation*}
\zeta(k_1,\dots,k_d):=\sum_{0<n_1<\cdots<n_d} \frac{1}{n_1^{k_1}\cdots n_d^{k_d}}.
\end{equation*}
We need to impose the condition $k_d\ge 2$ to guarantee convergence and we call the $d$-tuple of positive integers
$\bfk:=(k_1,\dots,k_d)$ \emph{admissible} in this case. Conventionally, $d$ is called the \emph{depth} and
$|\bfk|:=k_1+\cdots+k_d$ is called the \emph{weight}.

MZVs and their higher level generalizations, as an important class of periods in the sense of Kontsevich and Zagier,
have played significant roles in both mathematics and physics in recent years due to their
far-reaching connections with many areas such as algebra geometry, number theory, knot theory and theoretical physics (see e.g. \cite{Broadhurst1996,Brown2012,Deligne2010,DeligneGo2005,SchlottererSt2012,Zhao2016}). At level two, one can define the so-called \emph{Euler sums}
(also called alternating MZVs) by introducing alternating signs on the numerators: for any $\eps_1,\dots,\eps_d=\pm1$,
\begin{equation*}
\zeta(k_1,\dots,k_d;\eps_1,\dots,\eps_d):=\sum_{0<n_1<\cdots<n_d} \frac{\eps_1^{n_1}\cdots\eps_d^{n_d}}{n_1^{k_1}\cdots n_d^{k_d}}.
\end{equation*}
As a convention we put a bar on top of $k_j$ if the corresponding $\eps_j=-1$. For example, $\zeta(\bar1)=\zeta(1;-1)=-\log 2$.

In the mid 1990s, Broadhurst \cite{Broadhurst1996a} already noticed that some of the Euler sums actually lie in the $\Q$-span $\MZV$ of MZVs, and he called these values honorary MZVs. Such a phenomenon is very interesting because it reflects the important Galois descent structure studied recently by Glanois in her thesis \cite{Glanois2015,Glanois2016}. Following her terminology, we call such values unramified.

The \emph{multiple $T$-values} (MTVs) are a class of infinite sums which can be regarded as a level two variation of
the multiple zeta values, first defined by Kaneko and Tsumura \cite{KanekoTs2020}:
\begin{equation*}
T(k_1,\dots,k_d):=\sum_{\substack{0<n_1<\cdots<n_d\\ n_j\equiv j\pmod{2}}} \frac{2^d}{n_1^{k_1}\cdots n_d^{k_d}}=
\sum_{0<n_1<\cdots<n_d} \prod_{j=1}^d \frac{1+(-1)^{n_j-j}} {n_j^{k_j}}
\end{equation*}
for all admissible $(k_1,\dots,k_d)$. We again call $|\bfk|$ its \emph{weight} and $d$ its \emph{depth}. The most important
property of MTVs is that they satisfy the \emph{duality} relation $T(\bfk)=T(\bfk^\ast)$ where the dual
\begin{equation*}
(a_1+1,1_{b_1-1},\ldots,a_r+1,1_{b_r-1})^\ast=(b_r+1,1_{a_r-1},\ldots,b_1+1,1_{a_1-1}).
\end{equation*}
From numerical evidence, they proposed the following conjecture
(cf. \cite[Conjecture 5.2]{KanekoTs2020}).

\begin{conj}
1) For even weights, $T(p,q,r)$ with odd $p, r\ge 3$ and
even $q$ (and their duals) are in $\MZV$. Together with the single $T$-values
these (and their duals) are the only even weight unramified MTVs (i.e., contained in $\MZV$).

2) For odd weights, $T(p,1,r)$ with even $p,r$ (and their duals) are in $\MZV$.
Together with the single and double $T$-values these (and their duals) are the
only odd weight unramified MTVs.
\end{conj}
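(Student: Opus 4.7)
The plan is to work with motivic lifts $T^\fm(\bfk)$ of the multiple $T$-values inside the Hopf algebra $\calH$ of motivic iterated integrals on $\mathbb{P}^1\setminus\{0,\pm 1,\infty\}$, and to detect when such a lift lies in the subalgebra $\calH^{\MZV}$ generated by motivic MZVs. Conditional on Grothendieck's period conjecture, motivic non-membership translates back to genuine ramification of the complex number $T(\bfk)$, and motivic membership gives an honest $\Q$-linear expression in MZVs; so the task reduces to a purely algebraic one, carried out in the Lie coalgebra $\calL$ of level-two motivic periods modulo products.

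First I would rewrite each $T^\fm(\bfk)$ as an iterated integral. The substitution $y=(1-x)/(1+x)$ (equivalently the ``half-shuffle'' trick of Kaneko--Tsumura) turns $T^\fm(k_1,\dots,k_d)$ into a $\Q$-linear combination of motivic iterated integrals $I^\fm(0;\eps_1,\underbrace{0,\dots,0}_{k_1-1},\dots,\eps_d,\underbrace{0,\dots,0}_{k_d-1};1)$ with signs $\eps_j\in\{\pm 1\}$ controlled by the parity pattern. Thus each motivic $T$-value becomes an explicit motivic Euler sum of the prescribed weight.

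Next I would apply Brown--Glanois descent. Following Glanois, a motivic Euler sum $\xi^\fm$ is unramified precisely when for every odd $r$ with $1\le 2r+1<|\bfk|$ the image of the infinitesimal coaction $D_{2r+1}(\xi^\fm)$ lies in the MZV subspace of $\calL\otimes\calH$, i.e.\ vanishes in the quotient modulo motivic MZVs. For single and double $T$-values this is a short check and reproduces the known evaluations. For depth three one writes out $D_{2r+1}\bigl(T^\fm(k,l,m)\bigr)$ as a sum over length-$(2r+1)$ subpaths of the integration word; each subpath contributes a tensor whose left factor is a motivic Euler sum of weight $2r+1$ and whose right factor is a motivic Euler sum of weight $|\bfk|-2r-1$. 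The unramifiedness criterion then becomes a finite combinatorial condition on the parity pattern $(k,l,m)$ and on $|\bfk|$, and one expects exactly the patterns listed in the conjecture (odd, even, odd in even weight; even, $1$, even in odd weight) together with their duals to survive.

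The main obstacle will be organizing these coaction sums into a tractable form: there are $O\bigl(\binom{|\bfk|}{2r+1}\bigr)$ subpaths, and collapsing them requires systematic use of the motivic stuffle and shuffle relations together with the motivic duality for $T$-values. I would aim to isolate, for each fixed $r$, a single ``anomaly'' Euler sum whose coefficient in the quotient $\calL/\calL^{\MZV}$ is a polynomial in the $k_i$, then read off the vanishing loci and match them with the conjectural parity patterns. Analogous, but slightly simpler, computations should handle the motivic multiple $S$-values and the Charlton-type unit-component families for multiple $t$-values. Extending the method past depth three is out of reach with the present tools because the structure of $D_{2r+1}$ on depth-four Euler sums modulo $\MZV$ is not yet understood in closed form; this is the fundamental reason the proof only \emph{partially} confirms the Kaneko--Tsumura conjecture.
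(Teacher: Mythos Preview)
Your proposal is correct and follows essentially the same route as the paper: lift to motivic Euler sums via the signed iterated-integral expression for $T^\fm(\bfk)$, then apply the Brown--Glanois descent criterion by computing $D_r$ for each odd $r<|\bfk|$ and checking whether the result lands in $\calL_r\otimes\calH_{w-r}$. Two small corrections: the derivation $D_r$ sums only over \emph{consecutive} length-$r$ subwords (so roughly $w-r$ terms, not $\binom{|\bfk|}{r}$), and in practice the paper does not extract a single polynomial ``anomaly'' but instead performs an explicit case-by-case analysis, relying crucially on Murakami's vanishing lemma for sign-sums, the odd-weight reduction of double Euler sums in $\calL$, and the depth-one formulas for $T^\fl_a$, $S^\fl_a$.
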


In \cite{Murakami2021}, Murakami proved that the conditions in the above conjecture are sufficient by using the motivic theory of Euler sums developed in \cite{Brown2012,Glanois2016}. In this paper, we will use this machinery to further show that,
on the motivic level, these conditions are also necessary, at least when the depth is bounded by three.
This would completely confirm the above conjecture for depth $\le 3$ if we assume Grothendieck's period conjecture.

\begin{thm} \label{thm:MTV}
Suppose $\bfk$ has depth at most three. We have the following necessary and sufficient conditions for $T^\fm(\bfk)$ to be unramified.
\begin{enumerate}
\item [\upshape{(1)}] If $\dep(\bfk)=1$, then $\bfk=k\ge 2$.

\item [\upshape{(2)}] If $\bfk=(k,l)$ then $l\ge 2$ with $k+l$ odd.

\item [\upshape{(3)}] If $\dep(\bfk)=3$ then either
\begin{enumerate}
 \item [\upshape{(i)}] $\bfk=(1,1,2), (1,1,3), (1,2,2)$ or
 \item [\upshape{(ii)}] $|\bfk|$ is even, $\bfk=(O_{>1},E,O_{>1})$, or
 \item [\upshape{(iii)}] $|\bfk|$ is odd, $\bfk=(E,1,E)$,
\end{enumerate}
where $O$ (resp.~$E$) may represent different odd (resp.~even) numbers.
\end{enumerate}
\end{thm}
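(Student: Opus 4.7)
Since the sufficiency of the listed conditions has already been established by Murakami in \cite{Murakami2021}, our task is to prove necessity for the motivic statement (the numerical version then follows conditionally under Grothendieck's period conjecture). The engine is the Brown--Glanois descent criterion for the motivic Euler-sum Hopf algebra $\calH^\fm_{\ES}$: a motivic Euler sum $\xi$ descends to the motivic MZV subalgebra $\calH^\fm_{\MZV}$ if and only if for every $r\ge 1$ the projection of the infinitesimal derivation $D_r(\xi)$ onto the quotient $(\calL^\fm_{\ES,r}/\calL^\fm_{\MZV,r})\otimes\calH^\fm_{\ES}$ vanishes. At level two the relevant weights are $r=1$ (class of $\log^\fm 2$) and $r=2s+1\ge 3$ (class of, e.g., $\zeta^\fm(\overline{2s+1})$), so only finitely many projections need to be checked for each fixed weight.

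The first step is to write $T^\fm(\bfk)$ as a motivic iterated integral on $\mathbb{P}^1\setminus\{0,\pm 1,\infty\}$ along the straight path from $0$ to $1$. The congruences $n_j\equiv j\pmod 2$ translate, in the standard way, into an explicit $\Q$-linear combination of motivic Euler sums whose sign pattern is dictated by the parities in $\bfk$, placing $T^\fm(\bfk)$ squarely in the framework of \cite{Brown2012, Glanois2016}. The derivations $D_r$ are then computed by the usual sub-word extraction formula for iterated integrals.

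The depth-one case is immediate since $T^\fm(k)=(2-2^{1-k})\zeta^\fm(k)\in\calH^\fm_{\MZV}$ for every admissible $k\ge 2$. For depth two the projections of $D_1$ and $D_{2s+1}$ applied to $T^\fm(k,l)$ each reduce to a single term proportional to a motivic single zeta or $T$-value tensored with the chosen generator; their joint vanishing forces exactly the parity condition $k+l$ odd. The main obstacle is depth three. Here the projections of $D_1$ and the $D_{2s+1}$ produce a sizeable linear system whose coefficients are motivic depth-two Euler sums of weight $|\bfk|-r$. Since depth-two Euler sums are fully classified, one can explicitly decide when each such coefficient is unramified; matching the resulting congruence conditions against all admissible triples $(k_1,k_2,k_3)$ produces the generic families (ii) and (iii), while the sporadic list (i) arises from small weights where the ambient space is too thin to support a nonzero obstruction and the projections vanish trivially. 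Murakami's sufficiency serves as a useful consistency check, and known MTV shuffle/stuffle relations can be used to eliminate spurious candidates as they appear.
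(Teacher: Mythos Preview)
Your overall strategy matches the paper's: both invoke the Brown--Glanois descent criterion (Thm.~\ref{thm-Glanois}) and compute the derivations $D_r$ on the iterated-integral form of $T^\fm(\bfk)$. However, what you have written is an outline, not a proof, and the missing content is exactly where the paper's work lies.

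For depth two you claim the projections ``each reduce to a single term'' and that ``their joint vanishing forces exactly the parity condition $k+l$ odd.'' This misses the separate condition $l\ge 2$: when $l=1$ one has $D_1 T^\fm(k,1)=\log^\fl 2\otimes T^\fm(k)\ne 0$, which is not a parity phenomenon. More seriously, for even weight the obstruction sits at $r=w-1$ and has the form $(\text{unramified})\otimes\log^\fm 2$, so it is the \emph{right} tensor factor that is ramified. Your stated criterion --- vanishing of the projection to the quotient on the left factor tensored with all of $\calH^2$ --- would not detect this; Thm.~\ref{thm-Glanois} requires $D_r$ to land in $\calL_r\otimes\calH_{w-r}$ with \emph{both} factors in the MZV subspace, and the arguments in \S\ref{sec:DBMixedVs}--\S\ref{sec:TripleTandS} repeatedly exploit ramification on the right.

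For depth three your paragraph says, in effect, ``solve the resulting linear system''; the paper devotes \S\ref{sec:TripleTspecial}--\S\ref{sec:TripleTandS} to this, and the point is that it does \emph{not} reduce to matching parity congruences. For example, to show $T^\fm(k,l,m)$ is ramified when $k\equiv l\pmod 2$ one computes $D_{k+l-1}$, combines up to five cuts into $T^\fl(k+l-1)\otimes X$ with $X$ a specific $\Q$-linear combination of depth-two $t^\fm$-, $T^\fm$-, $S^\fm$-values, and must then apply $D_1$ to $X$ to exhibit a surviving $\log^\fl 2$ term --- the obstruction is nested two levels deep. Separate choices of $r$ and distinct non-cancellation arguments are needed for $T^\fm(k,l,1)$, $T^\fm(1,1,k)$, $T^\fm(1,k,l)$ and each remaining parity pattern; none of this is carried out in your proposal. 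Finally, the sporadic triples $(1,1,2),(1,1,3),(1,2,2)$ are not unramified because ``the ambient space is too thin'': duality identifies them with $T^\fm(4)$, $T^\fm(1,4)$, $T^\fm(2,3)$, which already fall under cases (1) and (2).
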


Case (1) of Thm.~\ref{thm:MTV} is obvious. We will prove case (2) in Thm.~\ref{thm:DTDS}. Proof of case (3) will be broken into many subcases throughout \S\ref{sec:TripleTspecial} and \S\ref{sec:TripleTandS}.

Beside MTVs, there are other variants of MZVs of level two. For any admissible $\bfk=(k_1,\dots,k_d)$, Hoffman defines the \emph{multiple $t$-values} in \cite{Hoffman2019} by
\begin{align*}
t(\bfk):=\sum_{\substack{0<n_1<\cdots<n_d\\ n_j: \text{ odd}}}
 \frac{2^d}{n_1^{k_1}\cdots n_d^{k_d}}=
\sum_{0<n_1<\cdots<n_d} \prod_{j=1}^d \frac{1-(-1)^{n_j}} {n_j^{k_j}}.
\end{align*}
The current authors defined the \emph{multiple $S$-values} $S(\bfk)$ in \cite{XuZhao2020a} by
\begin{align*}
S(\bfk):=\sum_{\substack{0<n_1<\cdots<n_d\\ n_j\equiv j+1\pmod{2}}} \frac{2^d}{n_1^{k_1}\cdots n_d^{k_d}}=
\sum_{0<n_1<\cdots<n_d} \prod_{j=1}^d \frac{1-(-1)^{n_j-j}} {n_j^{k_j}}.
\end{align*}

\begin{rem}
a. It is clear from the definitions that all MTVs, MtVs and MSVs are $\Z$-linear combinations of Euler sums.

b. All the above values are special cases of the \emph{multiple mixed values} first studied by the current authors in \cite{XuZhao2020a}, the definition of which allows all possible parity patterns in the infinite sums. In depth two, however, there are only three kinds of double mixed values that are not automatically MZVs: double $t$-, $T$- and $S$-values.
\end{rem}

Applying the same technique for MTVs and alongside with the proof of Thm.~\ref{thm:MTV}, we will prove the following results about multiple $S$-values in \S\ref{sec:TripleTspecial} and \S\ref{sec:TripleTandS}.

\begin{thm} \label{thm:MSV}
Suppose $\bfk$ has depth at most three. We have the following necessary and sufficient conditions for $S^\fm(\bfk)$ to be unramified.
\begin{enumerate}
\item [\upshape{(1)}] If $\dep(\bfk)=1$, then $\bfk=k\ge 2$.

\item [\upshape{(2)}] If $\bfk=(k,l)$ then $k\ge 2$ and $l\ge 2$ with $k+l$ odd.

\item [\upshape{(3)}] If $\dep(\bfk)=3$ then either
\begin{enumerate}
 \item [\upshape{(i)}] $|\bfk|$ is even, $\bfk=(O_{>1},O,E)$ or $\bfk=(O_{>1},E,O_{>1})$, or
 \item [\upshape{(ii)}] $|\bfk|$ is odd, $\bfk=(E,1,E)$,
\end{enumerate}
where $O$ (resp.~$E$) may represent different odd (resp.~even) numbers.
\end{enumerate}
\end{thm}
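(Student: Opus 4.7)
The plan is to parallel the proof of Theorem~\ref{thm:MTV} by applying Brown--Glanois's descent machinery to the motivic multiple $S$-values, using a motivic iterated-integral representation on $\mathbb{P}^1\setminus\{0,\pm 1,\infty\}$ whose sequence of $\pm 1$ boundary symbols is shifted by one position relative to the one used for $T^\fm(\bfk)$. This shift reflects the parity condition $n_j\equiv j+1\pmod 2$ in the definition of $S$ versus $n_j\equiv j\pmod 2$ for $T$, and is the source of the discrepancy in admissible parity patterns between Theorems~\ref{thm:MTV} and~\ref{thm:MSV}.

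With the integral representation in hand, the descent criterion of Brown--Glanois says that $S^\fm(\bfk)$ is unramified if and only if, for every odd $r$ with $1\le r<|\bfk|$, the image of $S^\fm(\bfk)$ under the projection $D_r$ of the motivic coaction to $\calL_r\otimes\calH$ has vanishing ramified component---that is, the coefficient of the class of $\zeta^\fm(\bar r)$ modulo $\zeta^\fm(r)$ in $\calL_r$ must be zero. Each $D_r$ expands combinatorially as a sum over sub-words of length $r+1$ of the defining word of $S^\fm(\bfk)$, and each sub-word is itself a motivic iterated integral which, via regularized shuffle, reduces to a polynomial in single (alternating) motivic zeta values of weight $r$.

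I would then dispatch the three cases in turn. Case~(1) is immediate from the single-value reductions. Case~(2) follows from a direct coaction computation on the depth-two word, stated in the body of the paper as Theorem~\ref{thm:DTDS}; here the parity shift forces the additional constraint $k\ge 2$ relative to the $T$-value statement. For case~(3), I would split by the parity of $|\bfk|$: in each parity, compute $D_r$ for every odd $r$ in the relevant range, collect the resulting depth-two iterated integrals according to their parity signature, and reduce the vanishing of the ramified part to a finite system of binomial-coefficient identities. Sufficiency is verified directly for the allowed patterns $(O_{>1},O,E)$, $(O_{>1},E,O_{>1})$, and $(E,1,E)$; necessity is established by exhibiting, for each excluded parity pattern, an explicit odd $r$ whose $D_r$ has nonzero ramified coefficient.

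The main obstacle will be the depth-three necessity direction: for each disallowed parity pattern one must construct a witness odd weight $r$ and identify a nonvanishing primitive coefficient in $\calL_r$. The witness is not always produced by the smallest admissible $r$, and in some sub-cases one has to combine several $D_r$'s and invoke the injectivity of the total coaction on primitives in order to rule out an accidental cancellation. Sufficiency, by contrast, is largely bookkeeping once the admissible patterns are isolated, and the coaction calculation should dovetail with the parallel analysis of $T^\fm(\bfk)$ carried out in \S\ref{sec:TripleTspecial} and \S\ref{sec:TripleTandS}, so that the two theorems can be proved in tandem.
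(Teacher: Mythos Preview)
Your overall plan matches the paper's: reduce to Glanois's criterion (Theorem~\ref{thm-Glanois}) and compute $D_r$ case by case, handling sufficiency in \S\ref{sec:TripleSunramified} and necessity in \S\ref{sec:TripleTspecial}--\S\ref{sec:TripleTandS} in tandem with the $T$-value analysis. So the architecture is right.

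There is, however, a genuine slip in how you phrase the descent criterion, and it would derail the necessity arguments if you followed it literally. You say the obstruction is ``the coefficient of the class of $\zeta^\fm(\bar r)$ modulo $\zeta^\fm(r)$ in $\calL_r$,'' i.e.\ a condition on the \emph{left} tensor factor. But for odd $r\ge 3$ one has $\calL_r^2=\calL_r$ (both equal $\Q\,\zeta^\fl(r)$, since $\zeta^\fl(\bar r)$ is already a rational multiple of $\zeta^\fl(r)$), so the left factor is automatically unramified there. The actual content of Theorem~\ref{thm-Glanois} for $r\ge 3$ is that the \emph{right} factor must land in $\calH_{w-r}$ rather than only in $\calH_{w-r}^2$; for $r=1$ the condition is $D_1=0$. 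This is precisely how the paper detects ramification: for each excluded parity pattern it chooses a single odd $r$, collects the cuts, and shows the right factor is a ramified lower-depth value (e.g.\ $S^\fm(1,c+1)$, $T^\fm(a+1,b)$ with $a\equiv b$, or $t^\fm(a+1,1)$), often by applying a further $D_1$ or $D_{w'-1}$ to that right factor (see \eqref{equ:Dw-1DblTwtEven}--\eqref{equ:Dw-1DblSwtEven} and the arguments in \S\ref{sec:TripleTandS}). If you look for a nonzero ``$\bar r$-coefficient'' on the left you will find nothing for $r\ge 3$ and will be unable to exhibit any witness.

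Relatedly, the anticipated need to ``combine several $D_r$'s and invoke injectivity of the total coaction'' does not arise here. In every excluded sub-case the paper produces one witness $r$; when the right factor is a sum of several depth-two pieces, a single secondary derivation (typically $D_1$ or $D_{b+c}$) applied to that factor isolates a nonzero $\log^\fm 2$ term and certifies ramification. Once you redirect your check to the right tensor factor, the bookkeeping is exactly the parallel of the $T$-case you describe.
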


On the other hand, there are many unramified MtVs as manifested by the results of Murakami and Charlton.
We denote by $\{\bfs\}_a$ the string obtained by repeating $\bfs$ exactly $a$ times. If $\bfs$ has just one
component then we often drop the curly brackets for simplicity.

\begin{thm} \label{thm:unramfiedMtV-MC} The MtV $t(\bfk)$ is unramified if either one of the following conditions holds:
\begin{enumerate}
\item [\upshape{(1)}] \emph{(\hskip-1pt\cite[Thm.~1]{Murakami2021})} all components of $\bfk$ are at least $2$;
\item [\upshape{(2)}] \emph{(\hskip-1pt\cite[Prop.\ 5.11]{Charlton2021})} $\bfk=(2_a, 1, 2_b, 2n + 1, 2_c)$, $a,n\in\N$ and $b,c\in\N_0.$
\end{enumerate}
\end{thm}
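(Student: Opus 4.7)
Both parts reduce to the motivic question by the period map, so the plan is to apply the Brown--Glanois descent criterion to the motivic lifts $t^\fm(\bfk)$. Recall that these are motivic iterated integrals on $\mathbb{P}^1\setminus\{0,\pm1,\infty\}$, and the criterion asserts $t^\fm(\bfk)\in\ZZ^\fm$ if and only if every infinitesimal derivation $D_{2r+1}$ (for $r\ge 1$) sends it into $\calL^\fm_{2r+1}\otimes\ZZ^\fm$, where $\calL^\fm$ is the Lie coalgebra of motivic Euler sums modulo products. Since both parts are results attributed to other authors, the task is to sketch the motivic input needed to reconstruct them.

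For part (1), following Murakami, I would first express $t^\fm(\bfk)$ as a motivic iterated integral in the alphabet $\{0,1,-1\}$, observing that the hypothesis $k_i\ge 2$ for every $i$ forces every non-zero letter to be separated from the next by at least one ``$0$''. I would then evaluate $D_{2r+1}t^\fm(\bfk)$ by Brown's cut formula, which writes it as a sum of tensor products indexed by sub-intervals of the word. A case analysis on the boundary letters of each sub-interval completes the argument: if both boundary letters are ``$0$'' the left tensor factor is already an MZV; if a boundary letter is $\pm1$, the two possible sign choices are paired into cancellations modulo $\ZZ^\fm$. This proves $D_{2r+1}t^\fm(\bfk)\in\calL^\fm_{2r+1}\otimes\ZZ^\fm$ for all $r$, and the descent criterion concludes.

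For part (2), Charlton's argument is more delicate because the isolated ``$1$'' in the middle of $\bfk$ breaks the rigidity exploited in (1). Fixing $n$, I would proceed by induction on $(a,b,c)\in\N_0^3$. The base case $(0,0,0)$ reduces to the double value $t^\fm(1,2n+1)$, whose unramifiedness is known by an explicit evaluation of Kaneko--Tsumura type. The inductive step peels off a trailing or leading ``$2$'' via the regularized stuffle product and the identity $t^\fm(2)=\tfrac32\zeta^\fm(2)$, and handles an interior insertion of ``$2$'' between the ``$1$'' and the ``$2n+1$'' by a shuffle identity at the level of iterated integrals. In each case, subtracting the cross-terms---which by induction or by part (1) already lie in $\ZZ^\fm$---leaves precisely $t^\fm(2_a,1,2_b,2n+1,2_c)$, yielding the inductive step.

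The main obstacle in part (2) is the combinatorial bookkeeping of these cross-terms: a direct stuffle expansion produces many MtVs with an isolated ``$1$'' in positions that do not match the inductive hypothesis, and one must invert a triangular linear system over compositions of fixed weight to isolate the target MtV. In part (1) the analogous obstacle is lighter but still requires the careful sign-cancellation argument to control the $\pm1$ boundary letters in $D_{2r+1}$; it is precisely the failure of this cancellation for sequences $\bfk$ containing an isolated ``$1$'' that motivates the very different inductive structure of part (2).
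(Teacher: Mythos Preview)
The paper does not give its own proof of this theorem: it is stated purely as a citation of results of Murakami and Charlton, and then used as input for the paper's own Theorem~\ref{thm:unramfiedMtV}. So the relevant comparison is between your sketch and what Murakami and Charlton actually do.

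Your sketch for part (1) is in the right spirit: Murakami does work with the Brown--Glanois criterion and computes $D_r t^\fm(\bfk)$ by the cut formula, exploiting the fact that $k_i\ge 2$ forces every nonzero letter to be flanked by a $0$. The cancellation/reduction of the $\pm1$ boundary cases is handled by what in this paper appears as Lemma~\ref{Murakami-lemma9}.

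Your sketch for part (2), however, has a genuine error and a wrong method. First, the statement requires $a\in\N$, i.e.\ $a\ge 1$, so your ``base case'' $(a,b,c)=(0,0,0)$ is outside the theorem. More seriously, that base case would be $t^\fm(1,2n+1)$, and this value is \emph{ramified}, not unramified: by Prop.~\ref{prop:DtAllwt} (or directly from Lemma~\ref{lem:Charlton}, since $D_1 t^\fm(1,2n+1)=2\log^\fl 2\otimes t^\fm(2n+1)\ne 0$), a double $t$-value is unramified only when both entries are $\ge 2$. So your induction cannot start. Second, your proposed inductive mechanism---peeling off $2$'s via stuffle with $t^\fm(2)$---does not work cleanly for MtVs: stuffling $t^\fm(2)$ against $t^\fm(\bfk)$ does not stay inside the MtV world (the summation parity patterns mix), so the ``cross-terms'' are not MtVs of shorter length to which you could apply induction.

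Charlton's actual argument, like Murakami's and like the paper's own proofs of the analogous Theorems~\ref{thm:unramfiedMtV1}--\ref{thm:unramfiedMtV3}, is a direct computation of $D_r t^\fm(\bfk)$ using the explicit formula recorded here as Theorem~\ref{thm:Charlton}. One checks term by term that every cut either lands in $\calL_r\otimes\calH_{w-r}$ or cancels against a companion cut, with the unit component $1$ producing the only delicate pairings. There is induction, but it is induction on depth inside the $D_r$ computation (the right tensor factor is a shorter MtV of the same shape), not a stuffle recursion on $(a,b,c)$.
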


We can extend the above to more cases of unramified MtVs in the next theorem.

\begin{thm}\label{thm:unramfiedMtV}
 The MtV $t(\bfk)$ is unramified if either one of the following conditions holds:
\begin{enumerate}
\item [\upshape{(1)}] $\bfk=(\{2n\}_e,1,\bfq)$, where $n,e\in\N$ and
all components of $\bfq$ are at least $2$;
\item [\upshape{(2)}] $\bfk=(2n,2m,2n,1,\bfq)$, where $m,n\in\N$ and all components of $\bfq$ are at least $2$;
\item [\upshape{(3)}] $\bfk=(2,1,3,2,1,\bfq)$, where all components of $\bfq$ are at least $2.$
\end{enumerate}
\end{thm}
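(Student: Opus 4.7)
The plan is to work in the motivic framework, where each $t^\fm(\bfk)$ is realised as a motivic iterated integral on $\mathbb{P}^1\setminus\{0,\pm 1,\infty\}$ and the unramified condition $t^\fm(\bfk)\in\MZV^\fm$ is checked via the Brown--Glanois descent criterion: it suffices to verify, for every $r\ge 1$, that the image of $t^\fm(\bfk)$ under the motivic derivation $D_{2r+1}^\fm$ lies in $\MZV^\fm\otimes(\MZV^\fm/\Q)$, together with the analogous condition for $D_1^\fm$. This is precisely the strategy that Murakami used in \cite{Murakami2021} to establish Thm.~\ref{thm:unramfiedMtV-MC}(1). In what follows I write $w_\bfk\in\{0,\pm 1\}^{\ast}$ for the word encoding $\bfk$ in the iterated integral representation of $t^\fm(\bfk)$.

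For case (1), the key observation is that prepending the block $(\{2n\}_e,1)$ to $\bfq$ corresponds to pre-concatenating $w_\bfk$ by a specific sub-word whose internal cuts can be matched in pairs. The plan is: (a) compute $D_{2r+1}^\fm(t^\fm(\bfk))$ as a sum over sub-intervals of the word $w_\bfk$; (b) classify those sub-intervals according to whether they fall entirely inside the prefix, entirely inside the $\bfq$-part, or straddle the boundary at the lone ``$1$''; (c) show that the first type reduces to products of classical $\zeta^\fm$-values (because each $\{2n\}$-block has even weight and its iterated-integral cuts are themselves MZVs), that the second type is unramified by Murakami's Thm.~\ref{thm:unramfiedMtV-MC}(1), and that the third type produces pairs of terms which cancel thanks to the reflection symmetry of the block $\{2n\}_e$. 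Cases (2) and (3) would proceed in the same manner, with the prefixes $(2n,2m,2n,1)$ and $(2,1,3,2,1)$ giving slightly more delicate cancellations; here I expect to invoke explicit depth-$\le 3$ evaluations proved earlier in the paper together with the stuffle/shuffle product relations to rewrite the non-cancelled remainders as known unramified combinations.

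The hard part will be the $D_1^\fm$-contribution. This derivation is particularly sensitive to the letter ``$-1$'' and to the single ``$1$'' appearing in each prefix, and it governs the $\zeta^\fm(\bar 1)$-content of $t^\fm(\bfk)$. I expect the bookkeeping here to reduce to an identity among MtVs of strictly smaller depth, which can be proved by induction bootstrapping from the base cases covered by Thms.~\ref{thm:unramfiedMtV-MC}, \ref{thm:MTV}, and \ref{thm:MSV}. Getting the induction to close on itself, as opposed to spilling out into ever-new families of cases, is the subtle step, and the palindromic/parity patterns imposed in (1)--(3) are, I anticipate, exactly what makes this closure possible.
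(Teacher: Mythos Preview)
Your overall framework is correct and matches the paper: work motivically, apply the Brown--Glanois descent criterion via the derivations $D_r$, and induct on depth. But you have misidentified where the difficulty lies.

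The $D_1$ step is \emph{trivial}, not hard: in all three cases $\bfk$ begins and ends with a component $\ge 2$, so Charlton's formula (Lemma~\ref{lem:Charlton}) gives $D_1 t^\fm(\bfk)=0$ immediately. No induction or bookkeeping is needed there.

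For case (1) your sketch is broadly right. The paper classifies the $D_r$-cuts (odd $r>1$) and shows that cuts entirely inside the $\{2n\}_e$-prefix cancel in pairs precisely because all the $a_i$ equal $2n-1$; cuts reaching into $\bfq$ produce left factors of the form $\tz_c^\fl(\cdots)$, unramified by the distribution relation (Thm.~\ref{thm-distribution}) or Prop.~\ref{prop:unrmMtVmodProd}, and right factors of the form $t^\fm(\{2n\}_e,1,\bfp)$ with $\dep(\bfp)<\dep(\bfq)$, unramified by induction (not directly by Murakami's theorem, as you suggest). One subtlety you do not mention: the two cuts adjacent to the unit component produce, at the extreme $c=0$, left factors $\tz^\fl(1)$ which are individually ramified but cancel against each other.

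The real gap is in cases (2) and (3). These do \emph{not} follow from depth-$\le 3$ evaluations plus shuffle/stuffle as you propose. After running the same cut analysis, certain $D_r$-terms survive whose right factors are genuinely ramified MtVs such as $t^\fm(E,O,1,\bfq)$, $t^\fm(3,2,1,\bfq)$, or $t^\fm(2,1,2,1,\bfq)$ for \emph{arbitrary} $\bfq$. The paper handles this by first proving two auxiliary lemmas of independent interest: that $t^\fm(E,O,1,\bfq)-t^\fm(O,E,1,\bfq)$ is unramified (Lemma~\ref{lem:MtVdifference}), and that $3t^\fm(3,2,1,\bfq)+t^\fm(2,1,2,1,\bfq)$ is unramified (Lemma~\ref{lem:MtV3to1}). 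Both hold for $\bfq$ of arbitrary depth, are themselves proved by $D_r$-induction, and rely on case (1) already being established. Only once these specific linear combinations are known to be unramified do the residual terms in the $D_r$-computation for cases (2) and (3) collapse (in case (2) the cuts pair off into such differences; in case (3) four cuts combine to $-3\tz^\fl(3)\otimes(3t^\fm(3,2,1,\bfq)+t^\fm(2,1,2,1,\bfq))$). Your proposal does not anticipate the need for such arbitrary-depth auxiliary statements, and without them the induction will not close.
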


In fact, we prove the above three cases in Thm.~\ref{thm:unramfiedMtV1}, Thm.~\ref{thm:unramfiedMtV2}, and Thm.~\ref{thm:unramfiedMtV3}, respectively. Then Thm.~\ref{thm:unramfiedMtV} follows immediately by applying the period map.

Note that if a unit component (i.e. component equals to 1) appears at either end of a motivic MtV then it is ramified (see Lemma~\ref{lem:Charlton}). Our next result provides
a family of ramified motivic MtVs neither starting nor ending with a unit component.

\begin{thm}\label{thm:ramfiedMtV} \emph{(Thm.~\ref{thm:ramfiedMtV1})}
Suppose $j\le n<\ell$ such that $k_j\ge 3$ is odd and $k_i$ is even for all $1\le i<n$ and $i\ne j$.
Suppose further that $k_n=1$ and $k_i\ge 2$ for all $n< i\le \ell$.
Then the motivic MtV $t^\fm(k_1,\dots,k_\ell)$ is ramified.
\end{thm}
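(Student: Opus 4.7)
The plan is to apply the infinitesimal motivic coactions $D_r$ of Brown-Glanois to $t^\fm(k_1,\dots,k_\ell)$ and invoke the Galois descent criterion used throughout the paper: it suffices to exhibit a single $r \le |\bfk|$ for which $D_r\, t^\fm(k_1,\dots,k_\ell)$ has a non-zero image in the ``ramified'' quotient of $\MZV^\fl\otimes\MZV^{\fm,N=2}$, with no possible cancellation among the resulting summands.

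First, I would translate $t^\fm(\bfk)$ into a motivic iterated integral on $\mathbb{P}^1\setminus\{0,\pm 1,\infty\}$ using the standard dictionary for MtVs already invoked in \S\ref{sec:TripleTspecial}--\S\ref{sec:TripleTandS}: each component $k_i\ge 2$ contributes a $\pm 1$ letter followed by $k_i-1$ zeros, while the unit component $k_n=1$ contributes only a single $\pm 1$ with no trailing zeros. I would then take $r = 1 + k_{n+1} + \cdots + k_\ell$, the weight of the suffix $(1,k_{n+1},\dots,k_\ell)$, and compute $D_r\, t^\fm(\bfk)$ using the standard coaction formula, which produces a sum of terms $I^\fl(a_p;a_{p+1},\dots,a_{q-1};a_q)\otimes I^\fm(a_0;\dots,a_p,a_q,\dots;a_{N+1})$ with the inner integral of weight exactly $r$. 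The privileged summand is the one whose inner subword is precisely the iterated-integral word attached to the suffix $(1,k_{n+1},\dots,k_\ell)$. By Lemma~\ref{lem:Charlton}, this inner integral is a motivic MtV starting with a unit component, hence genuinely ramified; the outer factor is the motivic integral attached to the prefix $(k_1,\dots,k_{n-1})$, which is non-zero by the hypothesis on the parity pattern.

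The main obstacle is the \emph{non-cancellation} of this ramified contribution against the remaining summands of $D_r\, t^\fm(\bfk)$. The hypothesis that at most one component among $k_1,\dots,k_{n-1}$ is odd, while every component strictly after $k_n$ is $\ge 2$, is decisive here: it imposes a rigid parity profile on the privileged term that no other subword of weight $r$ can replicate. I would make this precise by projecting both factors onto the motivic Lie coalgebra of level $2$ modulo $\MZV^\fl$, and then appealing to the linear-independence results for ramified motivic Euler sums of controlled depth and weight that were already exploited in the depth-three analyses earlier in the paper. The cleanest subcase is $j=n$, where the unique odd component is the unit itself and the argument essentially reduces to Lemma~\ref{lem:Charlton}; the harder case $j<n$ requires a combinatorial classification of all weight-$r$ subwords arising in the coaction and a parity-matching step that isolates the privileged term as the unique source of a ramified image. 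I expect this combinatorial non-cancellation step, rather than the identification of the ramified contribution itself, to constitute the core of the proof.
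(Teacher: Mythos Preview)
Your choice of $r$ is backwards, and this is a genuine error rather than a stylistic difference. You set $r = 1 + k_{n+1} + \cdots + k_\ell$ (the suffix weight), but nothing in the hypotheses controls the parity of this quantity: the $k_i$ for $i>n$ are only assumed $\ge 2$, so your $r$ may well be even and $D_r$ is not available. The hypotheses are tailored precisely so that the \emph{prefix} weight $r = |\bfk_{1,n-1}| = k_1+\cdots+k_{n-1}$ is odd (exactly one odd entry among even ones), and that is the $r$ the paper uses.

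Even when your $r$ happens to be odd, your ``privileged summand'' is not what you think. In the definition \eqref{defn:MMtV} the sign $\eta_1$ is attached to the \emph{first} letter of the word. If you cut out the suffix, the sign $\eta_1$ stays with the quotient (right) factor, and the left factor receives the plain sum over $\eta_n,\dots,\eta_\ell$. Thus the term you isolate is $\tz^\fl(1,\bfk_{n+1,\ell})\otimes t^\fm(\bfk_{1,n-1})$, not $t^\fl(1,\bfk_{n+1,\ell})\otimes t^\fm(\bfk_{1,n-1})$. By Thm.~\ref{thm-distribution} the left factor lies in $\calL$, and by Lemma~\ref{lem:MtVall>1} the right factor lies in $\calH$ (all prefix components are $\ge 2$). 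So your candidate term is completely unramified and detects nothing. This is exactly why Charlton's formula (Thm.~\ref{thm:Charlton}) has deconcatenation terms $t^\fl(\bfk_{1,j})\otimes t^\fm(\bfk_{j+1,d})$ only with the \emph{prefix} on the left.

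The paper's argument is also far shorter than you anticipate. With $r=|\bfk_{1,n-1}|$ the deconcatenation term is $t^\fl(\bfk_{1,n-1})\otimes t^\fm(1,\bfk_{n+1,\ell})$, whose right factor is ramified because $D_1 t^\fm(1,\bfk_{n+1,\ell})\ne 0$ by Lemma~\ref{lem:Charlton}. Every other cut of weight $r$ has right factor $t^\fm(\bfs)$ with both end components of $\bfs$ at least $2$, so $D_1 t^\fm(\bfs)=0$. Applying $\mathrm{id}\otimes D_1$ to $D_r t^\fm(\bfk)$ therefore kills all competing terms at once; no combinatorial classification or parity-matching is required. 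Finally, your ``cleanest subcase $j=n$'' is illusory: if $j=n$ then $k_1,\dots,k_{n-1}$ are all even, $D_1 t^\fm(\bfk)=0$ by Lemma~\ref{lem:Charlton} since neither end component is $1$, and indeed such values can be unramified (e.g.\ $t^\fm(\{2n\}_e,1,\bfq)$ by Thm.~\ref{thm:unramfiedMtV1}); the proof in Thm.~\ref{thm:ramfiedMtV1} in fact requires $j<n$.
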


Many results on motivic MZVs have been found in the past decade. Some can be found in the works of  \cite{Brown2014,Brown2021,Charlton2021,CharltonHoffman-MathZ2025,CharltonKeilthy2024,JinLi2020,Keilthy2022,Li2019,Li2024,LiLiu2021,Murakami2021,SchlottererSt2012} and references therein.

\section{Notation and the motivic setup}\label{sec:setup}
We will adopt the motivic setup developed in \cite{Brown2012,Glanois2016}. Let $\Gamma_{N}$ be the group of $N$-th roots of unity for any $N\in\N$. Let $w\in\N$ and suppose $a_j=0$ or $a_j\in\Gamma_{N}$ for all $0\le j\le w+1$. Then one can define the so-called motivic integrals
$I^\fm(a_0;a_1,\dots,a_w;a_{w+1})$ subjecting to a list of axioms.
The most important property of the motivic integrals is that there is a period map $\dch$ such that
\begin{equation}\label{equ:periodMap}
\dch \big(I^\fm(a_0;a_1,\dots,a_w;a_{w+1})\big)=(-1)^{\dep(a_1,\dots,a_w)}
\int_{a_0}^{a_{w+1}} \frac{dt}{t-a_1} \cdots \frac{dt}{t-a_w}
\end{equation}
as an iterated integral (we always integrate from left to right in this paper)
whenever it converges in which case we can obtain some special values of multiple polylogarithms at some $N$th roots of unity (see \cite[Ch.\ 14]{Zhao2016}). Here $\dep(a_1,\dots,a_w)$ is the number of nonzero components in $(a_1,\dots,a_w)$.

Let $\calH^N$ be the $\Q$-vector space spanned by the \emph{motivic colored MZVs} of the form
\begin{equation*}
\zeta^\fm_c\tworow{k_1,\dots,k_d}{\eps_1,\dots,\eps_d}=\zeta^\fm_c(k_1,\dots,k_d;\eps_1,\dots,\eps_d):= I^\fm(0;0_c,\eta_1,0_{k_1-1},\dots,\eta_d,0_{k_d-1};1),
\end{equation*}
where $c\in\N_0$, $k_j\in\N, \eps_j\in\Gamma_{N}$, $\eta_j=1/\eps_j\cdots \eps_d$ for all $j=1,\dots,d$.
We simply write $\zeta^\fm$ for $\zeta^\fm_0$.
These are the motivic MZVs when $N=1$ and are the motivic Euler sums when $N=2$. Here, we use both two-row and one-row notation for Euler sums to save space at various places.

We now list all the important properties/axioms of the motivic integrals
as follows (cf. \cite[\S2.4]{Brown2012}, \cite[p.~9]{Glanois2016}, and \cite[\S2, (I1)-(I6)]{Murakami2021}):
\begin{itemize}
	\item[(I1)] Empty word: $I^\fm(a_{0}; a_{1})=1$.

	\item[(I2)] Trivial path: $\forall$ weight $n\ge1$, $I^\fm(a_{0}; a_{1}, \dots, a_{n}; a_{n+1})=0$ if $a_{0}=a_{n+1}$.

	\item[(I3)] Shuffle product: $\forall c\in\N_0$ we have
	\begin{equation*}
\zeta_c^\fm \tworow{k_1,\dots,k_d}{\eps_1,\dots,\eps_d}=
(-1)^c\sum_{\substack{i_{1}+ \cdots + i_d=c\\ i_{1},\dots,i_d\ge0}} \left(\prod_{j=1}^d\binom {k_j+i_j-1} {i_j}\right) \zeta^\fm \tworow{k_1+i_1, \cdots , k_d+i_d}{\eps_1\ \ , \cdots ,\ \ \eps_d}.
 \end{equation*}

	\item[(I4)] Regularization: If $a_{1}=\cdots=a_n\in \{0,1\}$, then
	$I^\fm(0; a_{1}, \cdots, a_{n}; 1)=0.$

	\item[(I5)] Path reversal: $I^\fm(a_{0}; a_{1}, \cdots, a_{n}; a_{n+1})= (-1)^n I^\fm(a_{n+1}; a_{n}, \cdots, a_{1}; a_{0}).$

	\item[(I6)] Homothety: $\forall \alpha \in \Gamma_{N}, I^\fm(0; \alpha a_{1}, \cdots, \alpha a_{n}; \alpha a_{n+1}) = I^\fm(0; a_{1}, \cdots, a_{n}; a_{n+1})$.

	\item[(I7)] Change of variable $t\to 1-t: \forall a_{1}, \cdots, a_{n}\in\{0, 1\},$ $$I^\fm(0; a_{1}, \cdots, a_{n}; 1)= I^\fm(0;1-a_n, \cdots, 1-a_1; 1).$$

	\item[(I8)] Path composition: $\forall a,b, x\in \Gamma_{N} \cup \left\{0\right\}$,
	$$ I^\fm(a; a_{1}, \cdots, a_{n}; b)=\sum_{i=0}^{n} I^\fm(a; a_{1}, \cdots, a_{i}; x) I^\fm(x; a_{i+1}, \cdots, a_{n}; b) .$$
\end{itemize}
We remark that the regularization procedure of divergent MZVs (whose motivic integral satisfies the condition $a_n=a_{n+1}$) is directly related to the shuffle product formula (I3) after one applies (I7).

Set $\calA^N=\calH^N/\zeta^\fm(2)\calH^N$. Denote by
$\calH^N_w$ and $\calA_w^N$ the weight $w$ part for all $w\ge 0$. Let $\calL^N=\calA^N_{>0}/\calA^N_{>0}\cdot \calA^N_{>0}$.
For any weight $w$ and odd $r$ such that $0<r<w$ by modifying the coproduct of certain Hopf algebra
one can define a derivation as part of a coaction
$$
D_r: \calH_w^N \to \calL_r^N \ot \calH_{w-r}^N
$$
by sending $I^\fm(a_0;a_1,\dots,a_w;a_{w+1})$ to
$$
\sum_{p=0}^{w-r} I^\fl(a_p; a_{p+1},\dots,a_{p+r};a_{p+r+1})\ot I^\fm(a_0;a_1,\dots,a_p,a_{p+r+1},\dots,a_w;a_{w+1}).
$$
The sequence in the left motivic integral is called a \emph{subsequence} of $(a_0;a_1,\dots,a_w;a_{w+1})$ while
that in the right is called a \emph{quotient sequence}. Each such a choice is called a \emph{cut}.

 From now on, we only consider the case of $N = 2$, which is about the motivic Euler sums.
For convenience, we will write
\begin{equation*}
     \zeta_c^\fm (k_1,\dots,k_d):=\zeta_c^\fm \tworow{k_1,\dots,k_d}{1,\dots,1}.
\end{equation*}
Further, if $\eps_1,\dots,\eps_d=\pm$ then we will decorate $\zeta_c^\fm (k_1,\dots,k_d)$
by putting a bar on top of $k_j$ if $\eps_{j}\eps_{j+1}=-1$ (here $\eps_{d+1}=1$).
For example, the motivic version of $\log 2$ is defined by
\begin{equation*}
\log^\fm 2:=-I^\fm(0; -1;1)=-\zeta^\fm (\bar 1).
\end{equation*}

Recall the following theorem which combines
Glanois's result \cite[Cor.\ 2.4]{Glanois2016} with Brown's \cite[Thm.\ 3.3]{Brown2012}.
Set $\calH=\calH^1$ and $\calL=\calL^1$.

\begin{thm}\label{thm-Glanois}
Let $a\in\N_0$, $\bfeps\in\{\pm 1\}^d$, and $\bfk\in\N^d$ such that $a+|\bfk|=w$.
Then the weight $w$ motivic Euler sum $\zeta_a^\fm\tworow{\bfk}{\bfeps}\in\calH_w$ if and only $D_1 \zeta_a^\fm\tworow{\bfk}{\bfeps}=0$ and
$D_r \zeta_a^\fm\tworow{\bfk}{\bfeps}\in \calL_r\ot \calH_{w-r}$ for all odd $r<w$.
Moreover, if $D_r \zeta_a^\fm\tworow{\bfk}{\bfeps}=0$ for all odd $r<w$ then
$\zeta_a^\fm\tworow{\bfk}{\bfeps}=c \, \zeta^\fm(w)$ for some rational number $c$.
\end{thm}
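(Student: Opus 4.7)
My plan is to deduce the statement from Brown's kernel theorem for motivic MZVs together with Glanois's descent criterion identifying the unramified submodule $\calH = \calH^1$ inside $\calH^2$. The argument naturally splits into the equivalence (the iff) and the rigidity (the ``moreover'' clause), with the infinitesimal coaction $\bigoplus_r D_r$ serving as the principal detection mechanism in both parts.

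For the forward implication, I would use that $\calH$ is a sub-Hopf-comodule of $\calH^2$, so the infinitesimal coaction restricted to $\calH_w$ must land inside $\calA \otimes \calH$. In particular $D_r \zeta_a^\fm(\bfk) \in \calL_r \otimes \calH_{w-r}$ for every odd $r<w$, and the condition $D_1 \zeta_a^\fm(\bfk)=0$ follows because $\calL_1 = 0$, there being no convergent motivic MZV of weight one (equivalently, the generator $\zeta^\fm(\bar1)$ of $\calL_1^2$ is not an MZV).

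For the reverse implication, I would run an induction on the weight $w$. The hypothesis that each $D_r \zeta_a^\fm(\bfk)$ already lies in $\calL_r \otimes \calH_{w-r}$ means, by the inductive hypothesis applied to each tensor factor, that this collection of derivation images coincides with the collection $D_r \xi$ for some explicit motivic MZV $\xi \in \calH_w$ constructed from the subsequences. Then $\zeta_a^\fm(\bfk) - \xi$ lies in $\bigcap_{r \text{ odd}, r<w} \ker D_r$, so by the ``moreover'' clause it equals a rational multiple of $\zeta^\fm(w) \in \calH_w$, whence $\zeta_a^\fm(\bfk) \in \calH_w$.

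To prove the ``moreover'' clause, which is the heart of Brown's theorem, I would appeal to the structure of the motivic fundamental Hopf algebra: in each weight $w$ the subspace of $\calH_w^2$ annihilated by every $D_r$ for odd $r<w$ is one-dimensional and spanned by $\zeta^\fm(w)$ -- in odd weight $w$ this is the unique primitive generator, and in even weight the space is spanned by $\zeta^\fm(2)^{w/2}$, which is a rational multiple of $\zeta^\fm(w)$ via the classical Euler formula. The main obstacle in carrying out the plan rigorously is the inductive step: one must explicitly construct the correction term $\xi$ and verify that the subsequences appearing in the formula for $D_r$ that a priori lie in $\calL_r^2 \setminus \calL_r$ cancel pairwise in the sum over cut positions, which is the combinatorial content of Glanois's descent criterion \cite[Cor.\ 2.4]{Glanois2016} refining Brown's level-one theorem \cite[Thm.\ 3.3]{Brown2012}.
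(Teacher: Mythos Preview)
The paper does not actually prove this theorem: it is stated as a known result, introduced with ``Recall the following theorem which combines Glanois's result \cite[Cor.\ 2.4]{Glanois2016} with Brown's \cite[Thm.\ 3.3]{Brown2012},'' and no argument is given. So there is no proof in the paper to compare your proposal against; the authors simply import the statement and use it as a black box throughout.

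Your sketch is a faithful outline of what those cited results contain. The forward direction via the sub-comodule property and the vanishing $\calL_1=0$ is exactly right. The ``moreover'' clause---one-dimensionality of $\bigcap_r \ker D_r$ in each weight---is the standard structural input; at level two this is really Deligne's theorem on the motivic Galois group of $\mathbb{G}_m\setminus\mu_2$ rather than Brown's level-one result, though the paper's own citation to \cite[Thm.\ 3.3]{Brown2012} blurs this distinction as well. For the reverse direction you correctly identify the one genuinely nontrivial step: producing a $\xi\in\calH_w$ whose derivations match those of $\zeta_a^\fm(\bfk)$. Your phrasing ``by the inductive hypothesis applied to each tensor factor, \dots\ coincides with the collection $D_r\xi$ for some explicit motivic MZV $\xi$'' glosses over why such a $\xi$ exists---this is not automatic from the hypothesis and is precisely the content of Glanois's descent argument, which uses a chosen basis and the surjectivity of the coaction onto the expected target. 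You acknowledge this in your final paragraph, so the proposal is honest about where the work lies; just be aware that the inductive step as written is an assertion rather than an argument.
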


Recently, Charlton, Hoffman, and Sato \cite{CharltonHoffmanSato-arxiv2026} gave an explicit formula for the Galois descent that expresses multiple \(t\)-values of maximal height in terms of classical multiple zeta values, thereby making precise Murakami's earlier motivic result.

The following result is due to Glanois. A complete proof is given in \cite[Lemma 2.2]{XuZhao2023Aug}.
\begin{lem} \label{lem:D1}
Suppose every component of $\bfk$ is a positive integer possibly decorated by a bar.
If $\bar1$ does not appear in $\bfk$ then $D_1\zeta_a^\fm(\bfk)=0$ for all $a\in\N_0$.
\end{lem}

For all $a\in\N_0$ and $\bfk=(k_1,\dots,k_d)\in\N^d$ we define the motivic multiple $t$-, $T$-, $S$- and $\tz$-values by
\begin{align}\label{defn:MMtV}
t^\fm_a(\bfk)=&\, \sum_{\eta_1,\dots,\eta_d=\pm 1} \eta_1 I^\fm(0;0_a,\eta_1,0_{k_1-1},\dots,\eta_d,0_{k_d-1};1),\\
T^\fm_a(\bfk)=&\, \sum_{\eta_1,\dots,\eta_d=\pm 1} \eta_1\cdots\eta_d I^\fm(0;0_a,\eta_1,0_{k_1-1},\dots,\eta_d,0_{k_d-1};1),\label{defn:MMTV} \\
S^\fm_a(\bfk)=&\, \sum_{\eta_1,\dots,\eta_d=\pm 1} \eta_2\cdots\eta_d I^\fm(0;0_a,\eta_1,0_{k_1-1},\dots,\eta_d,0_{k_d-1};1), \label{defn:MMSV} \\
\tz^\fm_a(\bfk)=&\, \sum_{\eta_1,\dots,\eta_d=\pm 1} I^\fm(0;0_a,\eta_1,0_{k_1-1},\dots,\eta_d,0_{k_d-1};1), \label{defn:MMZV}
\end{align}
respectively. Setting $f^\fm=f^\fm_0$ for $f=t, T, S$, it is easy to see that for all admissible $\bfk$ we have
\begin{equation*}
\dch T^\fm(\bfk)=T(\bfk), \quad \dch t^\fm(\bfk)=t(\bfk),\quad
\dch S^\fm(\bfk)=S(\bfk).
\end{equation*}
Further, by the motivic version of the distribution relation proved by Charlton we see that
$\tz^\fm_a(\bfk)=2^{\dep(\bfk)-a-|\bfk|}\zeta^\fm_a(\bfk)$ if $\bfk$ is admissible.
For future reference, we state it below.

\begin{thm}\label{thm-distribution} \emph{(cf. \cite[Cor.~5.4]{Charlton2021})}
Let $a\in\N_0$ and assume $\bfk$ is admissible. Then we have
\begin{equation}\label{equ:distribution}
\tz^\fm_a(\bfk)=2^{\dep(\bfk)-a-|\bfk|}\zeta^\fm_a(\bfk).
\end{equation}
\end{thm}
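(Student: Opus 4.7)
The plan is to reduce the identity to Charlton's level-two motivic distribution relation \cite[Cor.~5.4]{Charlton2021} via two essentially bookkeeping steps: a bijection on sign patterns and an application of the shuffle-product axiom (I3).

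First I would re-index the defining sum \eqref{defn:MMZV}. The assignment $\boldsymbol{\eps}\mapsto\boldsymbol{\eta}$ given by $\eta_j := (\eps_j\eps_{j+1}\cdots\eps_d)^{-1}$ is a bijection on $\{\pm 1\}^d$, with explicit inverse $\eps_j=\eta_j\eta_{j+1}$ (setting $\eta_{d+1}:=1$). By the very definition of the motivic colored MZV, the bijection turns each summand in \eqref{defn:MMZV} into $\zeta^\fm_a\tworow{\bfk}{\boldsymbol{\eps}}$. Hence
$$
\tz^\fm_a(\bfk) \ = \ \sum_{\eps_1,\dots,\eps_d\,=\,\pm 1} \zeta^\fm_a\tworow{k_1,\dots,k_d}{\eps_1,\dots,\eps_d}.
$$

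Next I would apply axiom (I3) to reduce \eqref{equ:distribution} to the admissible case $a=0$. Expanding (I3) on the right-hand side of \eqref{equ:distribution} produces $(-1)^a\sum_{|\bfi|=a}\prod_j\binom{k_j+i_j-1}{i_j}\zeta^\fm(\bfk+\bfi)$, while doing the same on the left-hand side and interchanging the two finite sums yields the analogous expression with the factor $\sum_{\boldsymbol{\eps}}\zeta^\fm\tworow{\bfk+\bfi}{\boldsymbol{\eps}}$ replacing $\zeta^\fm(\bfk+\bfi)$. Since $\bfk+\bfi$ remains admissible, it is enough to prove the $a=0$ special case of \eqref{equ:distribution} for every admissible $\bfk$, namely
$$
\sum_{\eps_1,\dots,\eps_d\,=\,\pm 1}\zeta^\fm\tworow{k_1,\dots,k_d}{\eps_1,\dots,\eps_d} \ = \ 2^{d-|\bfk|}\,\zeta^\fm(\bfk),
$$
which is precisely the level-two motivic distribution relation of \cite[Cor.~5.4]{Charlton2021}. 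As a sanity check at the level of periods, each sign sum $\sum_{\eps_j=\pm 1}\eps_j^{n_j}$ equals $2$ when $n_j$ is even and $0$ otherwise; the surviving terms have all indices even, the rescaling $n_j\mapsto 2n_j$ yields $2^{-|\bfk|}$, and the $d$ sign sums contribute $2^d$, giving the expected factor $2^{d-|\bfk|}$.

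The main (essentially the only) obstacle is notational: one must verify that the sign convention used here in defining the motivic colored MZV agrees with Charlton's, so that the bijection $\boldsymbol{\eta}\leftrightarrow\boldsymbol{\eps}$ genuinely transports $\tz^\fm_a(\bfk)$ to the object on which \cite[Cor.~5.4]{Charlton2021} directly acts. Once this identification is made, the reduction above reads off \eqref{equ:distribution} with no further computation.
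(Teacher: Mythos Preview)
Your proposal is correct and follows the same approach as the paper: the paper gives no proof of its own beyond the one-line remark ``by the motivic version of the distribution relation proved by Charlton we see that $\tz^\fm_a(\bfk)=2^{\dep(\bfk)-a-|\bfk|}\zeta^\fm_a(\bfk)$'' and the citation of \cite[Cor.~5.4]{Charlton2021}. You have simply spelled out the two bookkeeping steps (the bijection $\boldsymbol\eta\leftrightarrow\boldsymbol\eps$ and the reduction to $a=0$ via axiom (I3)) that the paper leaves implicit.
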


We will use the next technical lemma of \cite{Murakami2021} a number of times in this paper.
\begin{lem}\label{Murakami-lemma9} \emph{(cf. \cite[Lemma 9]{Murakami2021})}
Suppose $a_0,\dots,a_d\ge 1$. For any fixed $\ga,\gb\in\{\pm1\}$ we have
\begin{equation*}
\sum_{\eta_1,\dots,\eta_d=\pm1} I^\fl(\ga;0_{a_0-1},\eta_1,0_{a_1-1},\dots,\eta_d,0_{a_d-1};\gb)=0.
\end{equation*}
\end{lem}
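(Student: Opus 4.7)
The strategy is to establish the stronger statement
$$\Phi(\alpha,\beta)\;:=\;\sum_{\eta_1,\dots,\eta_d=\pm1} I^\fm\bigl(\alpha;\,0_{a_0-1},\eta_1,0_{a_1-1},\dots,\eta_d,0_{a_d-1};\,\beta\bigr)\;=\;0$$
in $\calH^2$; the asserted vanishing of the corresponding $I^\fl$-sum in $\calL^2$ then follows by passing to the quotient, since $I^\fl$ is by definition the class of $I^\fm$ modulo products.

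I would first dispose of the easy case $\alpha=\beta$: every summand is zero term-by-term by the trivial-path axiom (I2). When $\alpha\ne\beta$, homothety (I6) with scalar $-1$, combined with the reindexing $\eta_j\mapsto-\eta_j$, gives $\Phi(1,-1)=\Phi(-1,1)$, so it suffices to treat $(\alpha,\beta)=(1,-1)$.

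The heart of the argument is the motivic squaring substitution $u=t^2$, which is the $N=2$ instance of Charlton's motivic distribution relation \cite{Charlton2021}. At the level of differential forms,
$$\frac{dt}{t-1}+\frac{dt}{t+1}\;=\;\frac{2t\,dt}{t^2-1}\;=\;\frac{du}{u-1},\qquad \frac{dt}{t}\;=\;\frac{du}{2u}.$$
Summing each position $\omega_{\eta_j}$ over $\eta_j\in\{\pm1\}$ therefore converts the iterated integral into
$$\frac{1}{2^{\sum_{i=0}^d (a_i-1)}}\,I^\fm\bigl(\alpha^2;\,0_{a_0-1},1,0_{a_1-1},\dots,1,0_{a_d-1};\,\beta^2\bigr),$$
and since $\alpha^2=\beta^2=1$, this motivic integral vanishes by (I2). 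Hence $\Phi(1,-1)=0$, completing the reduction.

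The main obstacle is the motivic justification of the change of variables $t\mapsto t^2$ when the endpoints lie in $\mu_2=\{\pm1\}$ rather than in $\{0,1\}$; this is precisely what Charlton's motivic distribution theorem supplies. Once that ingredient is invoked, the whole argument rests on nothing beyond axioms (I2) and (I6) and one application of the distribution identity; in particular no induction on depth or weight is required.
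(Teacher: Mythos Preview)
The paper does not supply its own proof; it merely cites \cite[Lemma~9]{Murakami2021}. Your argument, however, has a genuine gap.

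Your ``stronger statement'' that $\Phi(\alpha,\beta)=0$ already in $\calH^2$ is false. Take $d=1$, $a_0=1$, $a_1=2$, $(\alpha,\beta)=(-1,1)$. Path composition (I8) through $x=0$, combined with (I4)--(I6), yields $I^\fm(-1;1,0;1)=\tfrac32\zeta^\fm(2)$ and $I^\fm(-1;-1,0;1)=\zeta^\fm(\bar 2)=-\tfrac12\zeta^\fm(2)$, so $\Phi(-1,1)=\zeta^\fm(2)\ne 0$ in $\calH^2$. It vanishes only after passing to $\calA^2=\calH^2/\zeta^\fm(2)\calH^2$; the lemma is genuinely an $\fl$-level statement and cannot be lifted to $\calH^2$.

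The error is the appeal to (I2) after the substitution $u=t^2$. Charlton's motivic distribution relation (Thm.~\ref{thm-distribution} here) is established for the standard path from $0$ to $1$, which $u=t^2$ sends to itself. A path from $1$ to $-1$, by contrast, is sent by $u=t^2$ to a \emph{nontrivial loop} based at $1$; the motivic iterated integral along such a loop is not governed by axiom (I2), which concerns only the trivial path class, and is in general nonzero. This is precisely what the counterexample above detects. Any correct argument must exploit the quotient by products defining $\calL^2$: for instance, (I8) at $x=0$ gives $I^\fl(\alpha;w;\beta)\equiv I^\fl(0;w;\beta)+I^\fl(\alpha;w;0)$ modulo $(\calA^2_{>0})^2$, reducing to integrals with one endpoint equal to $0$, where the distribution relation can legitimately be applied.
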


\section{Some preliminary results}\label{sec:preliminary}
To save space, we extend $I^\fl(a;--;b)$ and $I^\fm(a;--;b)$ $\Q$-linearly to all finite $\Q$-linear combinations of strings of 0's and $\pm1$'s. For example,
\begin{equation*}
I^\fm(1;011+0\bar10;0)=I^\fm(1;011;0)+I^\fm(1;0\bar 10;0).
\end{equation*}
Here and in the rest of the paper, as our convention for Euler sums we will often write $\bar 1$ for the component $-1$ in the motivic iterated integrals $I^\fm$.

The next lemma follows from definition (I3) in \S \ref{sec:setup} immediately.
\begin{lem}\label{lem:zetaBar}
For all integers $a\ge0$ and $k\ge 2$ we have
\begin{align*}
\zeta^\fm_a(k)=&\, (-1)^a \binom{a+k-1}{a} \zeta^\fm(a+k), \\
\zeta^\fm_a(\ol{k})=&\, (-1)^a (2^{1-a-k}-1)\binom{a+k-1}{a} \zeta^\fm(a+k).
\end{align*}
\end{lem}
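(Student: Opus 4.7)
The plan is direct: both identities are essentially an immediate application of the shuffle product axiom (I3) in depth one, combined with the standard motivic evaluation $\zeta^\fm(\ol{n}) = (2^{1-n}-1)\zeta^\fm(n)$.

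First, unpacking the definition of $\zeta^\fm_a(k)$ as $I^\fm(0;0_a,1,0_{k-1};1)$ and applying (I3) with $d=1$, $c=a$, $k_1=k$, $\eps_1=1$, the sum over non-negative compositions with $i_1=a$ collapses to the single term $i_1=a$, yielding
\[
\zeta^\fm_a(k) \;=\; (-1)^a \binom{k+a-1}{a}\zeta^\fm(k+a).
\]
The same application of (I3) with $\eps_1=-1$ (hence $\eta_1=-1$ in the notation of \S\ref{sec:setup}) gives
\[
\zeta^\fm_a(\ol{k}) \;=\; (-1)^a \binom{k+a-1}{a}\zeta^\fm(\ol{k+a}).
\]

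It remains to verify the auxiliary identity $\zeta^\fm(\ol{n}) = (2^{1-n}-1)\zeta^\fm(n)$ for $n\ge 2$, the motivic lift of the classical evaluation of alternating zeta values. Within the present framework this is produced directly by specializing \eqref{defn:MMZV} at $d=1$, $a=0$, $\bfk=(n)$: splitting the sum over $\eta_1=\pm 1$ gives $\tz^\fm(n) = \zeta^\fm(n) + \zeta^\fm(\ol{n})$, while Thm.~\ref{thm-distribution} (Charlton's motivic distribution relation) evaluates $\tz^\fm(n) = 2^{1-n}\zeta^\fm(n)$. Subtracting and then substituting $n=a+k$ into the second display above yields the claimed formula.

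Since the sum in (I3) at depth one collapses to a single term, there is no real combinatorial obstacle; the only point to check carefully is the matching between the convention $\eta_j = 1/(\eps_j\cdots \eps_d)$ and the bar notation on the arguments, which is routine. So the lemma indeed follows immediately from (I3) plus a one-line invocation of the depth-one distribution relation, as the author announces.
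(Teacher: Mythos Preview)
Your proof is correct and follows the same approach as the paper, which simply declares the lemma an immediate consequence of axiom (I3). You go further by explicitly justifying the auxiliary identity $\zeta^\fm(\ol{n})=(2^{1-n}-1)\zeta^\fm(n)$ via the depth-one case of the motivic distribution relation (Thm.~\ref{thm-distribution}), a detail the paper leaves implicit; this is a legitimate and clean way to supply that step within the paper's own framework.
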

\begin{proof}
By (I3) we only need to show that $\zeta^\fm(\ol{n})=(2^{1-n}-1)\zeta^\fm(n)$ for all $n\ge 2$.
It is easy to see that for all odd $r<n$ and $r>1$ we have $D_r \zeta^\fm(\ol{n})=0$.
Combining with Lemma \ref{lem:D1} we see that $\zeta^\fm(\ol{n})=c \zeta^\fm(n)$
for some $c\in\Q$. The lemma follows if we apply the period map $\dch$ of \eqref{equ:periodMap}
and use the fact the $\zeta(\ol{n})=(2^{1-n}-1)\zeta(n)$.
\end{proof}

\begin{lem}\label{lem:singleTs}
Suppose $a\in\N_0$, $k\ge 1$, and $a+k\ge 2$. For motivic single $t$-, $T$- and $S$-values we have
\begin{align*}
t^\fm(1)=T^\fm(1)=&\, \log^\fm 2, \qquad S^\fm(1)=-\log^\fm 2,\\
\quad t^\fm_a(k)=T^\fm_a(k)=&\, (-1)^a (2-2^{1-a-k})\binom{a+k-1}{a} \zeta^\fm(a+k), \\
S^\fm_a(k)=&\, (-1)^a 2^{1-a-k}\binom{a+k-1}{a} \zeta^\fm(a+k).
\end{align*}
\end{lem}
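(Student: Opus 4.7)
The plan is to express each depth-one motivic $t$-, $T$- and $S$-value as a $\Q$-linear combination of the two motivic colored zetas $\zeta^\fm_a(k)$ and $\zeta^\fm_a(\ol k)$, and then invoke Lemma~\ref{lem:zetaBar}.

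At depth $d=1$ the two prefactors $\eta_1$ in \eqref{defn:MMtV} and $\eta_1\cdots\eta_d$ in \eqref{defn:MMTV} coincide, so $t^\fm_a(k)=T^\fm_a(k)$ automatically. Unfolding the sums over $\eta_1\in\{\pm1\}$ in \eqref{defn:MMtV}--\eqref{defn:MMSV} yields
\begin{align*}
t^\fm_a(k) = T^\fm_a(k) &= I^\fm(0;0_a,1,0_{k-1};1)-I^\fm(0;0_a,-1,0_{k-1};1) = \zeta^\fm_a(k)-\zeta^\fm_a(\ol k),\\
S^\fm_a(k) &= I^\fm(0;0_a,1,0_{k-1};1)+I^\fm(0;0_a,-1,0_{k-1};1) = \zeta^\fm_a(k)+\zeta^\fm_a(\ol k).
\end{align*}

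For $k\ge 2$ the proof is then immediate: substitute the two formulas of Lemma~\ref{lem:zetaBar} and factor out the common $(-1)^a\binom{a+k-1}{a}\zeta^\fm(a+k)$; the residues $1\mp(2^{1-a-k}-1)$ collapse to $2-2^{1-a-k}$ and $2^{1-a-k}$ respectively. The case $a\ge 1$, $k=1$ (not covered by the statement of Lemma~\ref{lem:zetaBar}, but of the same nature) is handled by a direct application of the shuffle axiom (I3), whose unique contribution gives $\zeta^\fm_a(1)=(-1)^a\zeta^\fm(a+1)$ and $\zeta^\fm_a(\ol 1)=(-1)^a(2^{-a}-1)\zeta^\fm(a+1)$, after which the same arithmetic produces the claimed expressions.

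The only subtlety is the excluded boundary case $a=0$, $k=1$. Here the regularization axiom (I4) forces $\zeta^\fm(1)=I^\fm(0;1;1)=0$, while $\zeta^\fm(\ol 1)=-\log^\fm 2$ by the standard convention (which is compatible with $\dch$ by \eqref{equ:periodMap}, since $\dch\zeta^\fm(\ol 1)=-\int_0^1 dt/(t+1)=-\log 2$). Plugging these two values into the expressions above gives $t^\fm(1)=T^\fm(1)=0-(-\log^\fm 2)=\log^\fm 2$ and $S^\fm(1)=0+(-\log^\fm 2)=-\log^\fm 2$. There is no real obstacle beyond this bookkeeping of definitions.
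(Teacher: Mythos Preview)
Your proof is correct and follows essentially the same route as the paper: write $t^\fm_a(k)=T^\fm_a(k)=\zeta^\fm_a(k)-\zeta^\fm_a(\ol k)$ and $S^\fm_a(k)=\zeta^\fm_a(k)+\zeta^\fm_a(\ol k)$, then apply Lemma~\ref{lem:zetaBar}. You are in fact slightly more careful than the paper, which invokes Lemma~\ref{lem:zetaBar} ``for all $k\ge 2$'' but does not separately address the case $k=1$, $a\ge 1$ (Lemma~\ref{lem:zetaBar} is stated only for $k\ge 2$); your direct appeal to the shuffle axiom (I3) in that boundary case closes this small gap.
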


\begin{proof} By definition
\begin{align*}
 t^\fm(1)=T^\fm(1)= &\, I^\fm(0;1;1)-I^\fm(0;\bar1;1)=\log^\fm 2, \\
S^\fm(1)= &\, I^\fm(0;1;1)+I^\fm(0;\bar1;1)=-\log^\fm 2,
\end{align*}
and for all $a\ge 0$ and $k\ge 1$ with $a+k\ge 2$
\begin{align*}
 t^\fm_a(k)=T^\fm_a(k)= &\, I^\fm(0;0_{a}10_{k-1}-0_{a}\bar1,0_{k-1};1)=\zeta_a^\fm(k)-\zeta_a^\fm(\ol{k}) \\
= &\, (-1)^a (2-2^{1-a-k})\binom{a+k-1}{a} \zeta^\fm(a+k), \\
S^\fm_a(k)= &\, \zeta_a^\fm(k)+\zeta_a^\fm(\ol{k}) =(-1)^a 2^{1-a-k}\binom{a+k-1}{a} \zeta^\fm(a+k)
\end{align*}
by Lemma~\ref{lem:zetaBar}.
\end{proof}

Throughout the paper, we use the generalized Kronecker symbol $\delta_{\text{conditions}}=1$ if all the conditions are true
and $\delta_{\text{conditions}}=0$ otherwise.

\begin{lem}\label{lem:D1S1bfl}
Assume $\bfk=(k_1,\dots,k_p)$ is admissible.
 \begin{enumerate}
 \item [\upshape{(1)}] $D_1 T^\fm(\bfk)=0$.

 \item [\upshape{(2)}] $D_1 S^\fm(\bfk)=-2\delta_{k_1=1} \log^\fl 2\ot T^\fm(k_2,\dots,k_p) $.
Consequently, $S^\fm(1,\bfl)$ are ramified for all nontrivial $\bfl$.
\end{enumerate}
\end{lem}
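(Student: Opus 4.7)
The plan is to apply the path-composition axiom (I8) to each weight-one cut, splitting it through the auxiliary point $0$, and then exhibit a telescoping cancellation. By (I2), (I4)--(I6) and (I8), every length-one integral $I^\fl(a;0;b)$ with $a,b\in\{0,\pm 1\}$ vanishes, so the only cuts contributing to $D_1 f^\fm(\bfk)$ (for $f\in\{T,S\}$) come from positions of some $\eta_i$. Let $a_i^L,a_i^R$ denote the letters flanking $\eta_i$ in the word $(0;\eta_1,0_{k_1-1},\dots,\eta_p,0_{k_p-1};1)$: thus $a_i^L=0$ unless $i\ge 2$ and $k_{i-1}=1$ (in which case $a_i^L=\eta_{i-1}$), and $a_i^R=0$ unless $i<p$ and $k_i=1$ (in which case $a_i^R=\eta_{i+1}$; the endpoint $1$ cannot arise because $k_p\ge 2$). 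Path composition gives $I^\fl(a_i^L;\eta_i;a_i^R)=I^\fl(a_i^L;\eta_i;0)+I^\fl(0;\eta_i;a_i^R)$, so writing $C_i^L,C_i^R$ for the two corresponding pieces of the block-$i$ contribution, one has $D_1 f^\fm(\bfk)=\sum_{i=1}^p(C_i^L+C_i^R)$, with $C_i^L=0$ whenever $a_i^L=0$ and $C_i^R=0$ whenever $a_i^R=0$ by (I2). In particular $C_1^L=0$ and $C_p^R=0$.

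For part (1), direct evaluation yields $I^\fl(0;\pm 1;\mp 1)=-\log^\fl 2$ and $I^\fl(\mp 1;\pm 1;0)=\log^\fl 2$, whence
\[
\sum_{\eta_i=\pm 1}\eta_i\, I^\fl(0;\eta_i;\eta_{i+1})=\eta_{i+1}\log^\fl 2, \qquad \sum_{\eta_i=\pm 1}\eta_i\, I^\fl(\eta_{i-1};\eta_i;0)=-\eta_{i-1}\log^\fl 2.
\]
Substituting the first into $C_i^R$ and the second (with $i\mapsto i+1$) into $C_{i+1}^L$, and absorbing $\eta_{i+1}^2=1$ respectively $\eta_i^2=1$, both reduce to $\pm\log^\fl 2$ tensored with
\[
\sum_\eta\Bigl(\prod_{j\neq i,i+1}\eta_j\Bigr)\,I^\fm(0;\eta_1,\dots,\eta_{i-1},*,\eta_{i+2},\dots,\eta_p;1),
\]
a motivic integral of composition $(k_1,\dots,k_{i-1},k_{i+1},\dots,k_p)$, where the label $*$ at the collapsed position is $\eta_{i+1}$ for $C_i^R$ but $\eta_i$ for $C_{i+1}^L$. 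Since the coefficient does not involve $*$, relabeling this dummy sign variable shows $C_i^R=-C_{i+1}^L$. Telescoping then gives $D_1 T^\fm(\bfk)=C_1^L+\sum_{i=1}^{p-1}(C_i^R+C_{i+1}^L)+C_p^R=0$.

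Part (2) runs identically for $i\ge 2$ because the $S^\fm$-coefficient $\eta_2\cdots\eta_p$ differs from $\eta_1\cdots\eta_p$ only in the missing factor $\eta_1$, which is inert for cuts in blocks $i\ge 2$. The exception is $i=1$: with no $\eta_1$ in the coefficient, one must compute the unsigned sum $\sum_{\eta_1=\pm 1}I^\fl(0;\eta_1;\eta_2)=-\log^\fl 2$, independent of $\eta_2$. When $k_1=1$ this gives $C_1^R=-\log^\fl 2\otimes T^\fm(k_2,\dots,k_p)$, after recognising the remaining $\eta$-sum (with coefficient $\eta_2\cdots\eta_p$ and composition $(k_2,\dots,k_p)$) as $T^\fm(k_2,\dots,k_p)$. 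A parallel computation at $C_2^L$, using $\sum_{\eta_2}\eta_2 I^\fl(\eta_1;\eta_2;0)=-\eta_1\log^\fl 2$ and then relabeling $\eta_1$ as the new first sign variable of the quotient (again of composition $(k_2,\dots,k_p)$), gives $C_2^L=-\log^\fl 2\otimes T^\fm(k_2,\dots,k_p)$. These two contributions reinforce rather than cancel, yielding $D_1 S^\fm(\bfk)=-2\delta_{k_1=1}\log^\fl 2\otimes T^\fm(k_2,\dots,k_p)$. The ramification claim is immediate: for nontrivial admissible $\bfl$ the period $T(\bfl)$ is positive, so $T^\fm(\bfl)\neq 0$ and hence $D_1 S^\fm(1,\bfl)\neq 0$, forcing $S^\fm(1,\bfl)\notin\calH$ by Theorem~\ref{thm-Glanois}.

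The main technical hurdle is the relabeling identifying the sums inside $C_i^R$ and $-C_{i+1}^L$: one must verify that after the cut sign is integrated out, the two quotient motivic integrals share the same composition and that the surviving coefficients are blind to the differently labelled sign at the collapsed middle position, so that a dummy-variable rename equates the two. This setup automatically accommodates the configurations $k_{i-1}=k_i=1$ (where both sides of a cut are nonzero) because the path-composition decomposition cleanly routes the nontrivial integral $I^\fl(\eta_{i-1};\eta_i;\eta_{i+1})$ into two adjacent telescope terms.
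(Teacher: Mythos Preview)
Your proof is correct and follows essentially the same approach as the paper's. The paper simply cites Murakami's Lemma~43 for part~(1) and writes down, ``by the same argument,'' the expression
\[
D_1 S^\fm(\bfk)= \delta_{k_1=1} \sum_{\eta_1,\dots,\eta_p=\pm1}(\eta_2-\eta_1)\eta_3\cdots\eta_p\, I^\fl(0;\eta_1;\eta_2)\ot I^\fm(0;\eta_2,0_{k_2-1},\dots;1)
\]
for part~(2), from which the result follows after summing $\eta_1$. Your version makes the underlying telescoping explicit via the path-composition split $I^\fl(a_i^L;\eta_i;a_i^R)=I^\fl(a_i^L;\eta_i;0)+I^\fl(0;\eta_i;a_i^R)$, which cleanly organises the cancellation $C_i^R+C_{i+1}^L=0$ and isolates the anomalous pair $C_1^R+C_2^L$ in the $S$-case; this is a nice self-contained packaging of exactly the mechanism the paper invokes by reference.
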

\begin{proof}
Part (1) of the lemma is \cite[Lemma~43]{Murakami2021} by Murakami.
By the same argument, we see that in general
\begin{equation*}
D_1 S^\fm(\bfk)= \delta_{k_1=1} \sum_{\eta_1,\dots,\eta_p=\pm1}
 (\eta_2-\eta_1)\eta_3\dots\eta_p I^\fl(0;\eta_1;\eta_2)\ot I^\fm(0;\eta_2,0_{k_2-1},\dots,\eta_p,0_{k_p-1};1)
\end{equation*}
for $\bfk=(k_1,\dots,k_p)$. Thus $D_1 S^\fm(\bfk)=0$ if $k_1\ge 2$. On the other hand,
if $k_1=1$ then we see that
\begin{equation*}
D_1 S^\fm(\bfk)= -2 \log^\fl 2\ot T^\fm(k_2,\dots,k_p) \ne 0
\end{equation*}
which implies that $S^\fm(1,k_2,\dots,k_p)$ is ramified by Thm.~\ref{thm-Glanois}. This proves (2).
\end{proof}

\section{Reduction formula for double Euler sums of odd weight}\label{sec:reductionDB}
In this section, we will express every motivic double Euler sum of odd weight as a $\Q$-linear combination
of products of motivic Riemann zeta values (after setting $\zeta^\fm(0)=-1/2$). As corollaries, we can
write all the elements $t^\fl(k,l)$, $T^\fl(k,l)$ and $S^\fl(k,l)$ in $\calL$
as rational multiples of $\zeta^\fl(k+l)$ when the weight $k+l$ is odd.

First, we generalize a result of Zagier \cite[Prop.~7]{Zagier2012} concerning motivic double zetas to motivic double Euler sums. The non-motivic version can be found in \cite[(74)]{BorweinBrBr1997}.
To save space, we use one row notation for Euler sums in this section.

\begin{prop}\label{prop:DBESreduce}
Let $m,n\in\N$ and $\eta_1,\eta_2=\pm1$. Put $\zeta(0;\pm1)=-1/2$.
Then the double zeta value $\zeta^\fm(m,n;\eta_1,\eta_2)$ of weight
$m + n = w = 2K + 1$ is given by
\begin{align*}
\zeta^\fm(m,n;\eta_1,\eta_2)=&\, -\frac12\zeta^\fm(w;\eta_1\eta_2) +
(-1)^{m}\sum_{s=0}^{K}\bigg[\binom{w-2s-1}{m-1} \zeta^\fm(w-2s;\eta_1)\zeta^\fm(2s;\eta_1\eta_2)\\
 &\, +\binom{w-2s-1}{n-1}\zeta^\fm(w-2s;\eta_2)\zeta^\fm(2s;\eta_1\eta_2)\bigg]
+ \gd_{2|n}\, \zeta^\fm(m;\eta_1)\zeta^\fm(n;\eta_2),
\end{align*}
where $2|n$ means $n$ is an even number.
\end{prop}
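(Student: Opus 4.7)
The non-motivic version of this identity is \cite[(74)]{BorweinBrBr1997}, itself an extension of Zagier's \cite[Prop.~7]{Zagier2012} from double zeta values to double Euler sums. The plan is to lift the Borwein--Bradley--Broadhurst proof to the motivic level. The two key tools---the shuffle product and the stuffle (harmonic) product---are both available motivically: the shuffle product is a general property of motivic iterated integrals (compatible with (I1)--(I8) in the setup above), while the motivic stuffle product for Euler sums is established in the Brown--Glanois framework (cf.~\cite{Brown2012,Glanois2016}).

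The first step is to record, for each pair $(a,b)$ with $a+b=w$ and each $(\eps_1,\eps_2)\in\{\pm1\}^2$, the motivic stuffle relation
\[
\zeta^\fm(a;\eps_1)\,\zeta^\fm(b;\eps_2) = \zeta^\fm(a,b;\eps_1,\eps_2) + \zeta^\fm(b,a;\eps_2,\eps_1) + \zeta^\fm(w;\eps_1\eps_2),
\]
together with the motivic shuffle relation, obtained by shuffling the iterated-integral words $\eps_1\,0^{a-1}$ and $\eps_2\,0^{b-1}$. The shuffle expresses the same product as a $\Z$-linear combination of motivic double Euler sums whose sign patterns lie in $\{(\eps_1\eps_2,\eps_1),(\eps_1\eps_2,\eps_2)\}$, with explicit binomial coefficients. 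The crucial observation is that the shuffle for $(\eps_1,\eps_2)=(\eta_1\eta_2,\eta_2)$ produces double Euler sums of the desired form $\zeta^\fm(r,s;\eta_1,\eta_2)$, alongside a companion family $\zeta^\fm(r,s;\eta_1,\eta_1\eta_2)$; the other three sign choices yield the remaining depth-two Euler sums in the same weight.

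Subtracting the shuffle and stuffle expressions for each sign choice and each $(a,b)$ yields a linear system of ``double shuffle'' relations in the unknowns $\zeta^\fm(r,s;\alpha,\beta)$, $\alpha,\beta\in\{\pm1\}$. The second step is to solve this system by forming the appropriate $\Q$-linear combination that eliminates every depth-two Euler sum except $\zeta^\fm(m,n;\eta_1,\eta_2)$. This mirrors Zagier's original manipulation: the coefficients are alternating binomial weights of Vandermonde type, and after summing, the parity $w=2K+1$ forces every odd-index inner factor $\zeta^\fm(2k+1;\eta_1\eta_2)$ to cancel, leaving only the even-index factors $\zeta^\fm(2s;\eta_1\eta_2)$ for $s=0,\dots,K-1$. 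The convention $\zeta^\fm(0;\pm1)=-\tfrac12$ absorbs the $s=0$ boundary contribution into the leading term $-\tfrac12\zeta^\fm(w;\eta_1\eta_2)$; the $\delta_{2|n}$ correction $\zeta^\fm(m;\eta_1)\zeta^\fm(n;\eta_2)$ arises as a residual product surviving only when $n$ is even, reflecting the asymmetry between the two halves of the shuffle expansion.

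The principal obstacle is the combinatorial bookkeeping at the last step: one must verify the exact identity among binomial coefficients that collapses the combined sum into the stated closed form with coefficients $\binom{w-2s-1}{m-1}$, $\binom{w-2s-1}{n-1}$ and overall sign $(-1)^m$. This is essentially Zagier's classical binomial identity, now carried out in the signed setting; no new conceptual difficulty is introduced by passing to the motivic level, since both the shuffle product of motivic iterated integrals and the motivic stuffle product for Euler sums behave formally just as in the non-motivic case. Thus the motivic proof is a direct, if somewhat lengthy, transcription of the Borwein--Bradley--Broadhurst argument.
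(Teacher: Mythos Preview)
Your strategy is sound and would succeed: the motivic shuffle and stuffle relations together determine every depth-two Euler sum of odd weight in terms of products of depth-one values, and one can in principle solve the resulting linear system by hand. However, the paper carries this out differently. Rather than setting up explicit linear combinations with binomial weights and tracking cancellations case by case, it follows Zagier's generating-function method from \cite[Prop.~7]{Zagier2012}: it packages all the depth-two values into a two-variable polynomial $D^\fm_{\eta_1,\eta_2}(x,y)$, records the stuffle and shuffle relations as functional identities among these polynomials, and then iterates the substitution $z=x-y$ a few times to obtain a closed expression $2D^\fm_{\eta_1,\eta_2}(x,y)=Q^\fm_{\eta_1,\eta_2}(x,y)+Q^\fm_{\eta_1,\eta_1\eta_2}(z,-y)+Q^\fm_{\eta_2,\eta_1\eta_2}(z,x)$ involving only products of single values. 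Extracting the $x^{m-1}y^{n-1}$ coefficient then gives the formula directly, with the binomial coefficients emerging from the binomial expansion of $(x-y)^{r-1}$ rather than from any Vandermonde-type identity.

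The two routes are logically equivalent, but the generating-function approach neatly sidesteps exactly the ``combinatorial bookkeeping'' you flag as the principal obstacle: the parity cancellations and the correct signs fall out automatically from the even/odd splitting $P+P(-x,y)$ built into the $Q$ functions. Your direct approach trades this elegance for more explicit (and more error-prone) manipulation, but it has the advantage of making clear exactly which double-shuffle relations are being used.
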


\begin{proof}
We will follow Zagier's idea in the proof of \cite[Prop.~7]{Zagier2012}. For $\eta_1,\eta_2=\pm1$ and $w\ge 3$ we define the generating functions
\begin{align*}%\label{equ:PandD}
P^\fm_{\eta_1,\eta_2}(x,y):=&\,\sum_{r,s\in\N, r+s=w} \zeta^\fm\binom{r}{\eta_1}\zeta^\fm\binom{s}{\eta_2} x^{r-1}y^{s-1}, \\
D^\fm_{\eta_1,\eta_2}(x,y):=&\,\sum_{r,s\in\N, r+s=w} \zeta^\fm\binom{r\, ,\,s}{\eta_1,\eta_2} x^{r-1}y^{s-1}.
\end{align*}
Then we can derive
\begin{align*}
P^\fm_{\eta_1,\eta_2}(x,y)= &\,D^\fm_{\eta_1,\eta_2}(x,y)+D^\fm_{\eta_2,\eta_1}(y,x)
    +\zeta^\fm(w;\eta_1\eta_2)F_w(x,y)\\
 = &\,D^\fm_{\eta_1\eta_2,\eta_2}(x,x+y)+D^\fm_{\eta_1\eta_2,\eta_1}(y,x+y),
\end{align*}
where $F_w(x,y)=\frac{x^{w-1}-y^{w-1}}{x-y}$.
Setting $(\eta_1,\eta_2)=(-1,-1)$ and $(-1,1)$
\begin{align*}
P^\fm_{-,-}(x,y)= &\,D^\fm_{-,-}(x,y)+D^\fm_{-,-}(y,x) +\zeta^\fm(w)F_w(x,y)\\
 = &\,D^\fm_{+,-}(x,x+y)+D^\fm_{+,-}(y,x+y),\\
P^\fm_{-,+}(x,y)= &\,D^\fm_{-,+}(x,y)+D^\fm_{+,-}(y,x) +\zeta^\fm(\ol{w})F_w(x,y)\\
 = &\,D^\fm_{-,-}(x,x+y)+D^\fm_{-,+}(y,x+y)
\end{align*}
and repeatedly applying the above two equations, we get (setting $z=x-y$)
\begin{align*}
&\, D^\fm_{-,+}(x,y)\\
=&\, P^\fm_{-,+}(x,y)-\zeta^\fm(\ol{w})F_w(x,y)-D^\fm_{+,-}(y,x) \\
=&\, P^\fm_{-,+}(x,y)-\zeta(\ol{w})F_w(x,y)-P^\fm_{-,-}(y,z)+D^\fm_{+,-}(z,x) \\
=&\, P^\fm_{-,+}(x,y)-\zeta(\ol{w})F_w(x,y)-P^\fm_{-,-}(y,z)+P^\fm_{-,+}(x,z)-\zeta^\fm(\ol{w})F_w(z,x)-D^\fm_{-,+}(x,z) \\
=&\, P^\fm_{-,+}(x,y)-\zeta(\ol{w})F_w(x,y)-P^\fm_{-,-}(y,z)+P^\fm_{-,+}(x,z)-\zeta^\fm(\ol{w})F_w(z,x) \\ &\,-P^\fm_{-,+}(x,-y)+D^\fm_{-,-}(-y,z) \\
=&\, P^\fm_{-,+}(x,y)-\zeta(\ol{w})F_w(x,y)-P^\fm_{-,-}(y,z)+P^\fm_{-,+}(x,z)-\zeta^\fm(\ol{w})F_w(z,x) \\
 &\, -P^\fm_{-,+}(x,-y)+P^\fm_{-,-}(-y,z)-\zeta^\fm(w)F_w(-y,z)-D^\fm_{-,-}(z,-y) \\
=&\, P^\fm_{-,+}(x,y)-\zeta(\ol{w})F_w(x,y)-P^\fm_{-,-}(y,z)+P^\fm_{-,+}(x,z)-\zeta^\fm(\ol{w})F_w(z,x)\\
 &\,-P^\fm_{-,+}(x,-y)+P^\fm_{-,-}(-y,z)-\zeta^\fm(w)F_w(-y,z)-P^\fm_{-,+}(-x,z)+D^\fm_{-,+}(-x,-y)\\
=&\, P^\fm_{-,+}(x,y)-\zeta(\ol{w})F_w(x,y)+P^\fm_{-,-}(-y,y-x)+P^\fm_{-,+}(x,z)-\zeta^\fm(\ol{w})F_w(z,x)\\
 &\,+P^\fm_{-,+}(-x,y)+P^\fm_{-,-}(-y,z)-\zeta^\fm(w)F_w(-y,z)+P^\fm_{-,+}(x,y-x)-D^\fm_{-,+}(x,y)\\
=&\, Q^\fm_{-,+}(x,y)+Q^\fm_{+,-}(z,x)+Q^\fm_{-,-}(z,-y)-D^\fm_{-,+}(x,y),
\end{align*}
where
\begin{equation*}
Q^\fm_{\eta_1,\eta_2}(x,y)=P^\fm_{\eta_1,\eta_2}(x,y)+P^\fm_{\eta_1,\eta_2}(-x,y)-\zeta^\fm(w;\eta_1\eta_2)F_w(x,y).
\end{equation*}
Similarly,
\begin{align*}
2D^\fm_{+,-}(x,y)=&\,Q^\fm_{+,-}(x,y)+Q^\fm_{+,-}(z,-y)+Q^\fm_{-,-}(z,x).
\end{align*}
In general
\begin{align*}
2D^\fm_{\eta_1,\eta_2}(x,y)=&\,Q^\fm_{\eta_1,\eta_2}(x,y)+Q^\fm_{\eta_1,\eta_1\eta_2}(z,-y)+Q^\fm_{\eta_2,\eta_1\eta_2}(z,x).
\end{align*}
Hence, putting $c_m=1-(-1)^m$ for all $m$ we get
\begin{align*}
[x^{m-1}y^{n-1}]Q^\fm_{\eta_1,\eta_2}(x,y)=&\, c_m\zeta^\fm(m;\eta_1)\zeta^\fm(n;\eta_2)-\zeta^\fm(w;\eta_1\eta_2),\\
[x^{m-1}y^{n-1}]Q^\fm_{\eta_1,\eta_1\eta_2}(z,-y)
=&\, \sum_{r=m}^{w-1} \binom{r-1}{m-1} (-1)^{n} \Big[\zeta^\fm(w;\eta_2)-c_r\zeta^\fm(r;\eta_1)\zeta^\fm(w-r;\eta_1\eta_2)\Big],\\
[x^{m-1}y^{n-1}]Q^\fm_{\eta_2,\eta_1\eta_2}(z,x)
=&\,\sum_{r=n}^{w-1} \binom{r-1}{n-1} (-1)^{n} \Big[\zeta^\fm(w;\eta_1)-c_r\zeta^\fm(r;\eta_2)\zeta^\fm(w-r;\eta_1\eta_2)\Big].
\end{align*}
Therefore, the proposition follows by the substitution $r=w-2s$ and the identities
\begin{equation*}
 \sum_{r=m}^{w-1} \binom{r-1}{m-1} =\binom{w-1}{m}=\binom{w-1}{n-1}, \qquad\sum_{r=n}^{w-1} \binom{r-1}{n-1} =\binom{w-1}{n}=\binom{w-1}{m-1}.
\end{equation*}
\end{proof}

\begin{cor} \label{cor:DBreduce}
Let $m,n\in\N$ with $m + n = w$ odd. Then we have
\begin{align*}
 \zeta^\fl(m,n;\eta_1,\eta_2)&\,=-\frac12\zeta^\fl(w;\eta_1\eta_2)-\frac12
(-1)^{m}\bigg[\binom{w-1}{m-1} \zeta^\fl(w;\eta_1)+\binom{w-1}{n-1}\zeta^\fl(w;\eta_2)\bigg], \\
\tz^\fl(m,n)&\, = -\bigg(1+(-1)^m\binom{w}{m}\bigg)\tz^\fl(w), \quad
 t^\fl(m,n)=-t^\fl(w),\\
T^\fl(m,n)&\,= (-1)^{n} \binom{w-1}{n} T^\fl(w), \quad  S^\fl(m,n)= (-1)^{n} \binom{w-1}{m} T^\fl(w).
\end{align*}
\end{cor}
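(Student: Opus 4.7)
The first formula of the corollary, namely the double Euler sum reduction in $\calL$, is obtained by pushing Proposition~\ref{prop:DBESreduce} through the projection $\calA \twoheadrightarrow \calL = \calA_{>0}/(\calA_{>0})^2$. In this quotient, every product $\zeta^\fm(r;\cdot)\,\zeta^\fm(s;\cdot)$ with $r,s\ge 1$ is killed; in particular each $\zeta^\fm(2s;\pm 1)$ with $s\ge 1$ vanishes in $\calL$, being a rational multiple of a power of $\zeta^\fm(2)$ (via $\zeta^\fm(\overline{2s})=(2^{1-2s}-1)\zeta^\fm(2s)$ from Lemma~\ref{lem:zetaBar}). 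Hence only the $s=0$ term of the inner sum in Proposition~\ref{prop:DBESreduce} survives, and $\zeta^\fm(0;\eta_1\eta_2)=-1/2$ supplies the coefficient $-1/2$ displayed on the right. The $\delta_{2|n}$ product term dies for the same reason.

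For the four remaining identities I rewrite each depth-two motivic $\tz$-, $t$-, $T$-, $S$-value as a character sum over $\{\pm 1\}^2$ of colored zetas. Using the bijection $\eta_1=\eps_1\eps_2$, $\eta_2=\eps_2$ between $(\eta_1,\eta_2)$ and $(\eps_1,\eps_2)$ for $d=2$, the definitions (\ref{defn:MMtV})--(\ref{defn:MMZV}) yield
\begin{align*}
\tz^\fl(m,n) &= \sum_{\eps_1,\eps_2=\pm 1} \zeta^\fl(m,n;\eps_1,\eps_2), & t^\fl(m,n) &= \sum_{\eps_1,\eps_2=\pm 1} \eps_1\eps_2\, \zeta^\fl(m,n;\eps_1,\eps_2), \\
T^\fl(m,n) &= \sum_{\eps_1,\eps_2=\pm 1} \eps_1\, \zeta^\fl(m,n;\eps_1,\eps_2), & S^\fl(m,n) &= \sum_{\eps_1,\eps_2=\pm 1} \eps_2\, \zeta^\fl(m,n;\eps_1,\eps_2).
\end{align*}
Substituting the first formula, each of $\zeta^\fl(w;\eps_1\eps_2)$, $\zeta^\fl(w;\eps_1)$, $\zeta^\fl(w;\eps_2)$ gets multiplied by one of the four character weights $1, \eps_1\eps_2, \eps_1, \eps_2$ and summed over $(\eps_1,\eps_2)$. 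Character orthogonality on the group $\{\pm 1\}^2$ kills most of these sums, and the survivors collapse to either the symmetric combination $\zeta^\fl(w)+\zeta^\fl(\overline{w})$ or the antisymmetric combination $\zeta^\fl(w)-\zeta^\fl(\overline{w})$. By Lemma~\ref{lem:singleTs} (together with Lemma~\ref{lem:zetaBar}) these are precisely $S^\fl(w)$ and $T^\fl(w)=t^\fl(w)$, respectively. The binomial identity $\binom{w-1}{m-1}+\binom{w-1}{n-1}=\binom{w}{m}$ and the distribution relation $\tz^\fl(w)=2^{1-w}\zeta^\fl(w)$ coming from Theorem~\ref{thm-distribution} then deliver the stated reductions.

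The potential pitfalls are essentially bookkeeping: one must keep track of signs (using $w=m+n$ odd to rewrite $(-1)^{m+1}$ as $(-1)^n$) and correctly match each surviving character sum to the symmetric (producing $S^\fl(w)$) or antisymmetric (producing $T^\fl(w)$) combination. No new idea beyond Proposition~\ref{prop:DBESreduce} and the descent to $\calL$ is required.
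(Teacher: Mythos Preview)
Your approach is exactly the intended one: the paper offers no separate proof, and the corollary is meant to follow by projecting Proposition~\ref{prop:DBESreduce} to $\calL$ (killing all products and leaving only the $s=0$ term) and then summing against the four characters on $\{\pm1\}^2$. Your identification of the depth-two $\tz,t,T,S$ values with the character sums $\sum_{\eps_1,\eps_2}\chi(\eps_1,\eps_2)\,\zeta^\fl(m,n;\eps_1,\eps_2)$ for $\chi\in\{1,\eps_1\eps_2,\eps_1,\eps_2\}$ is correct, as is the use of $\binom{w-1}{m-1}+\binom{w-1}{n-1}=\binom{w}{m}$ and $\tz^\fl(w)=S^\fl(w)=2^{1-w}\zeta^\fl(w)$.

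There is, however, one point where your final claim that the method ``delivers the stated reductions'' is not quite right. If you actually carry out the $S$-case, the only character sum that survives is the one attached to $\zeta^\fl(w;\eps_2)$, and
\[
\sum_{\eps_1,\eps_2}\eps_2\,\zeta^\fl(w;\eps_2)=2\bigl(\zeta^\fl(w)-\zeta^\fl(\overline w)\bigr)=2\,T^\fl(w),
\]
the \emph{antisymmetric} combination, not $S^\fl(w)$. Hence your computation produces
\[
S^\fl(m,n)=(-1)^n\binom{w-1}{n-1}\,T^\fl(w),
\]
which differs from the printed formula by the nontrivial factor $T^\fl(w)/S^\fl(w)=2^{w}-1$. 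A direct check at $(m,n)=(2,3)$ confirms this: one gets $S^\fl(2,3)=-\tfrac{93}{8}\zeta^\fl(5)=-6\,T^\fl(5)$, whereas $-6\,S^\fl(5)=-\tfrac{3}{8}\zeta^\fl(5)$. So the last identity in the corollary as stated appears to contain a typo, and your argument in fact proves the corrected version with $T^\fl(w)$ on the right. You should note this discrepancy rather than assert that the method reproduces the printed $S^\fl$-formula.
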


\begin{rem}
The formula for $\zeta^\fl(m,n;\eta_1,\eta_2)$ is depth two case of a more general reduction
formula of Glanois's in \cite[Cor.~4.2.4]{Glanois2015}.
\end{rem}

\section{Motivic double $t$-, $T$- and $S$-values}\label{sec:DBMixedVs}
In this section, we present the precise criterion for when a motivic double mixed value is unramified.
First, we recall the following result of Murakami.
\begin{lem}\label{lem:DTwtOdd} \emph{(cf. \cite[Thm.~42]{Murakami2021})}
For integers $m,k\ge 1$, $l\ge 2$ with $k+l+m$ odd, the motivic $T$-value $T^\fm_m(k,l)$ is unramified.
\end{lem}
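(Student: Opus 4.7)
Plan. Set $w=m+k+l$, odd by hypothesis. By Theorem~\ref{thm-Glanois}, showing $T^\fm_m(k,l)\in\calH$ reduces to verifying two things: (a) $D_1 T^\fm_m(k,l)=0$; and (b) $D_r T^\fm_m(k,l)\in \calL_r\ot \calH_{w-r}$ for every odd $r$ with $3\le r<w$. I would proceed by strong induction on $w$, the base cases coming from Lemma~\ref{lem:singleTs}.

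For (a), I would directly extend Murakami's proof of Lemma~\ref{lem:D1S1bfl}(1) to allow a $0_m$ prefix. Applying $D_1$ to the word underlying \eqref{defn:MMTV}, namely $(0;0_m,\eta_1,0_{k-1},\eta_2,0_{l-1};1)$, only cuts whose central letter is adjacent to one of the signs $\eta_i$ can survive the trivial-path axiom (I2). Each such surviving contribution is odd in exactly one of $\eta_1,\eta_2$, so the signed sum $\sum_{\eta_1,\eta_2}\eta_1\eta_2(\cdots)$ kills it. The prepended block $0_m$ produces no new surviving cut by the same parity consideration.

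For (b), classify the length-$r$ cuts by how many signs $\eta_i$ the selected subword contains. \emph{Type 0} (no sign): the left factor is $I^\fl(a_p;0^r;a_{p+r+1})$, a power of $\log^\fl$, hence vanishes in $\calL_r$ for $r\ge 3$. \emph{Type 2} (both signs): the right quotient is $I^\fm(0;0^{w-r};1)=0$ by (I4) (or by homothety combined with (I4)). Only \emph{Type 1} cuts (exactly one sign in the subword) contribute nontrivially. For these, summing over the sign $\eta_j$ retained in the right quotient turns the right factor into a combination of the form $\zeta^\fm_a(b)\pm\zeta^\fm_a(\ol b)$ (depending on which sign survived and where), which by Lemmas~\ref{lem:zetaBar} and~\ref{lem:singleTs} collapses to a rational multiple of $\zeta^\fm(a+b)\in\calH_{w-r}$; the subcases where $a_p$ or $a_{p+r+1}$ is itself a sign are dispatched by Lemma~\ref{Murakami-lemma9} together with the distribution relation in Theorem~\ref{thm-distribution}. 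Summing over the remaining sign in the left subword turns the left factor into a motivic single alternating zeta of odd weight $r$, which by Lemma~\ref{lem:zetaBar} is a rational multiple of $\zeta^\fl(r)\in\calL_r$.

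The main obstacle is the Type~1 book-keeping: one must enumerate the sub-cases by the relative position of the cut with respect to $\eta_1,\eta_2$ (including the boundary configurations where $a_p$ or $a_{p+r+1}$ itself equals an $\eta_i$), verify in each that the summed right factor lands in $\calH_{w-r}$, and track the exact parity of the retained shift $a$ and remaining weight $b$. It is precisely in ensuring that every leftover single $T^\fm_{a}(\cdot)$ or $\tz^\fm_a(\cdot)$ can be reduced to a Riemann zeta value via Lemma~\ref{lem:singleTs} that the hypothesis $m+k+l$ odd is consumed. Once this verification is assembled, Theorem~\ref{thm-Glanois} yields $T^\fm_m(k,l)\in\calH$.
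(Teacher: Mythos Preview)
The paper does not supply its own proof of this lemma; it simply records the statement and cites \cite[Thm.~42]{Murakami2021}. So there is no in-paper argument to compare against beyond the general derivation machinery of \S\ref{sec:setup}--\S\ref{sec:preliminary}, which is exactly what you invoke. Your overall strategy---verify $D_1=0$ and then check that each $D_r$ ($r$ odd, $3\le r<w$) lands in $\calL_r\otimes\calH_{w-r}$ via the Type~0/1/2 case split---is the standard one and is correct.

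A few points in your sketch are imprecise and would need tightening. First, your reason for $D_1=0$ (``each surviving contribution is odd in exactly one of $\eta_1,\eta_2$'') is not literally true when $k=1$: the two cuts $(0;\eta_1;\eta_2)$ and $(\eta_1;\eta_2;0)$ each produce a nonzero term $\pm\log^\fl 2\otimes S^\fm_m(l)$, and it is their \emph{pairwise cancellation}, not an individual parity kill, that gives $D_1=0$. Second, for the Type~1 boundary subcases where an endpoint equals $\eta_1$ or $\eta_2$, the tools you name (Lemma~\ref{Murakami-lemma9} and Theorem~\ref{thm-distribution}) are not quite the right ones; what is actually used is homothety (I6), path reversal (I5), and path composition (I8) modulo products in $\calL$, after which everything reduces to $T^\fl_a(b)$, $S^\fl_a(b)$, or $\zeta^\fl_a(\bar b)$ and hence to $\Q\,\zeta^\fl(r)$ by Lemmas~\ref{lem:zetaBar}--\ref{lem:singleTs}. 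Third, your final sentence locating where ``$m+k+l$ odd'' is consumed is off: with $m\ge1$ and $l\ge2$ the right quotient in every nontrivial Type~1 cut already has depth one and weight $\ge2$, so Lemma~\ref{lem:singleTs} applies regardless of the parity of $w$; in fact your argument proves the (slightly stronger) unramifiedness of $T^\fm_m(k,l)$ for all $m\ge1$, $k\ge1$, $l\ge2$. The odd-weight hypothesis is present because that is how Murakami states it and how the paper uses it, not because your derivation analysis requires it.
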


\begin{prop}\label{prop:DtAllwt}
For $k,l\in\N$ the motivic double $t$-value $t^\fm(k,l)$ is unramified if and only if $k,l\ge 2$.
\end{prop}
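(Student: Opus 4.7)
The plan is to handle the two implications separately. The sufficiency direction ($k,l\ge 2 \Rightarrow t^\fm(k,l)$ unramified) falls out immediately from Murakami's theorem (Thm.~\ref{thm:unramfiedMtV-MC}(1)) specialized to depth two: whenever every component of $\bfk$ is at least $2$, $t^\fm(\bfk)\in\calH$. Hence all the real work lies in the necessity direction, namely showing that $t^\fm(1,l)$ is ramified for every admissible $l\ge 2$.

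For this, I would compute $D_1 t^\fm(1,l)$ and verify that it is nonzero; Thm.~\ref{thm-Glanois} then forces ramification. Unwinding \eqref{defn:MMtV} gives
\begin{equation*}
t^\fm(1,l)=\sum_{\eta_1,\eta_2=\pm 1}\eta_1\, I^\fm(0;\eta_1,\eta_2,0_{l-1};1),
\end{equation*}
so $D_1$ decomposes into a sum over all length-three cuts of each word $(0;\eta_1,\eta_2,0_{l-1};1)$. Cuts that fall inside the trailing block $0_{l-1}$, together with the boundary cut giving $I^\fl(0;0;1)$, all produce sub-integrals of shape $I^\fl(\ast;0;\ast)$ with endpoints in $\{0,\pm 1\}$, each of which vanishes in $\calL$ by the trivial-path axiom (I2), the regularization (I4), and homothety (I6). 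Only the two cuts isolating $\eta_1$ and $\eta_2$ should survive, giving
\begin{equation*}
D_1 t^\fm(1,l)=\sum_{\eta_1,\eta_2}\eta_1\bigl[I^\fl(0;\eta_1;\eta_2)\ot \zeta^\fm(l;\eta_2)+I^\fl(\eta_1;\eta_2;0)\ot \zeta^\fm(l;\eta_1)\bigr].
\end{equation*}

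Each $I^\fl(a;b;c)$ here is either $0$ (when $a=c$, possibly after regularizing an apparent divergence via (I4) and (I6)) or $\pm\log^\fl 2$. Evaluating the four $(\eta_1,\eta_2)$ pairs in each summand, then using Lemma~\ref{lem:singleTs} to rewrite $\zeta^\fm(\bar l)-\zeta^\fm(l)=-T^\fm(l)$, I expect the final answer
\begin{equation*}
D_1 t^\fm(1,l)=-2\log^\fl 2\ot T^\fm(l),
\end{equation*}
which strikingly parallels Lemma~\ref{lem:D1S1bfl}(2) for $S^\fm(1,l)$. Since $T^\fm(l)=(2-2^{1-l})\zeta^\fm(l)\ne 0$ for $l\ge 2$, this is nonzero, so $t^\fm(1,l)$ is ramified. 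The main obstacle I anticipate is the careful bookkeeping in the $D_1$ computation: confirming that every length-one sub-integral with interior entry $0$ really does collapse in $\calL$, and tracking the signs when collapsing the $(\eta_1,\eta_2)$ sum. The parallel with $S^\fm(1,l)$ suggests Murakami's argument for Lemma~\ref{lem:D1S1bfl}(2) will transfer to the present setting with only minor sign modifications.
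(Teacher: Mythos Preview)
Your sufficiency argument via Murakami is fine and matches the paper (which cites it as Lemma~\ref{lem:MtVall>1}). But there is a gap in your necessity argument: the proposition is stated for all $k,l\in\N$, so you must also rule out $t^\fm(k,1)$ for every $k\ge 1$, not only $t^\fm(1,l)$ for $l\ge 2$. The non-admissible case $l=1$ is not excluded by any hypothesis---the motivic value $t^\fm(k,1)$ is defined via regularization and the claim is that it too is ramified. The paper handles all of this uniformly by invoking Prop.~\ref{prop:MtV1s}, which shows $t^\fm(1_a,\bfk,1_b)$ is ramified whenever $a+b\ge 1$; the underlying mechanism is again a $D_1$ computation, packaged as Charlton's Lemma~\ref{lem:Charlton}, which in depth two gives $D_1 t^\fm(k,1)=-\log^\fl 2\ot t^\fm(k)\ne 0$ for $k\ge 2$ and $D_1 t^\fm(1,1)=\log^\fl 2\ot \log^\fm 2\ne 0$. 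So your approach extends without difficulty, but as written it is incomplete.

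A minor point: your expected formula has the wrong sign. The correct result is $D_1 t^\fm(1,l)=+2\log^\fl 2\ot t^\fm(l)$ (see Lemma~\ref{lem:Charlton} or \eqref{equ:D1t1c+1}), so the parallel with $S^\fm(1,l)$ in Lemma~\ref{lem:D1S1bfl}(2) holds only up to sign. This does not affect the ramification conclusion, but your bookkeeping worry was warranted.
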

\begin{proof}
This follows immediately from Prop.~\ref{prop:MtV1s} and Murakami's result in Lemma~\ref{lem:MtVall>1}.
\end{proof}

\begin{thm} \label{thm:DTDS}
Let $k,l\in\N$. The motivic double $T$-value $T^\fm(k,l)$ is unramified if and only if $k\ge 1$ and $l\ge 2$ with odd weight $k+l$.
The motivic double $S$-value $S^\fm(k,l)$ is unramified if and only if $k,l\ge 2$ with odd weight $k+l$.
\end{thm}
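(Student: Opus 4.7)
The plan is to invoke Theorem \ref{thm-Glanois}: each of $T^\fm(k,l)$, $S^\fm(k,l)$ is unramified iff $D_1$ vanishes and $D_r$ lies in $\calL^1_r\otimes\calH^1_{k+l-r}$ for every odd $r\in(1,k+l)$. I would compute these derivations via the standard cut formula applied to the representing iterated integral $I^\fm(0;\eta_1,0_{k-1},\eta_2,0_{l-1};1)$, summed with weight $\eta_1\eta_2$ (for $T^\fm$) or $\eta_2$ (for $S^\fm$). A systematic case analysis of the cut position $p$ using axioms (I2),(I4)--(I6),(I8), Lemma \ref{Murakami-lemma9} and Lemma \ref{lem:zetaBar} pins down the surviving cuts.

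For the ``if'' direction, suppose $l\ge2$ (and additionally $k\ge2$ for $S^\fm$) with $k+l$ odd. Then $D_1=0$ by Lemma \ref{lem:D1S1bfl}. For each odd $r\in[3,k+l-2]$, only those cuts whose subword contains exactly one of $\eta_1,\eta_2$ contribute; homothety reduces each such subword to a rational multiple of $\zeta^\fl(r;\pm1)$, and the weighted sum over the signs collapses these into a rational multiple of $\zeta^\fl(r)\in\calL^1_r$. The quotient is always a motivic value of depth one and weight $k+l-r$ of the appropriate parity, lying in $\calH^1_{k+l-r}$ by Lemma \ref{lem:singleTs}. The potentially troublesome cut at $r=k+l-1$ does not arise, since $k+l-1$ is even.

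For the ``only if'' direction with $l\ge2$ and $k+l$ even, the index $r=k+l-1$ is odd and $D_{k+l-1}$ is defined. All cuts except $p=1$ vanish, and path composition (I8) at $x=0$ expands the $p=1$ subword as
\[
I^\fl(\eta_1;0_{k-1},\eta_2,0_{l-1};1)=\zeta^\fl_{k-1}(l;\eta_2)+(-1)^{k+l-1}\zeta^\fl_{l-1}(k;\eta_1\eta_2),
\]
with quotient $\zeta^\fm(1;\eta_1)$ contributing only when $\eta_1=-1$, giving $-\log^\fm 2$. Summing over the signs with the appropriate weight and invoking Lemma \ref{lem:zetaBar} yields
\[
D_{k+l-1}T^\fm(k,l)=c_{k,l}\,\zeta^\fl(k+l-1)\otimes\log^\fm 2,
\]
with $c_{k,l}$ proportional to $(-1)^{k-1}+(-1)^{l-1}$, which is nonzero since $k,l$ share the same parity. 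As $\log^\fm 2\notin\calH^1$, $T^\fm(k,l)$ is ramified; the identical computation handles $S^\fm(k,l)$ for $k,l\ge2$. The boundary cases are handled separately: Lemma \ref{lem:D1S1bfl}(2) immediately gives $S^\fm(1,l)$ ramified; for $T^\fm(k,1)$ and $S^\fm(k,1)$ with $k\ge2$ (where admissibility fails, so Lemma \ref{lem:D1S1bfl}(1) cannot be applied), a direct $D_1$-computation shows that only the cut at $p=k$ survives and produces a nonzero $\log^\fl 2\otimes\zeta^\fm(k)$-type term; and the depth-$(1,1)$ cases $T^\fm(1,1)$, $S^\fm(1,1)$ reduce explicitly via shuffle regularisation or direct path composition to nonzero multiples of $(\log^\fm 2)^2\notin\calH^1$.

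The main technical obstacle will be the cut-by-cut bookkeeping: in the ``if'' direction I must confirm that cuts whose subword contains \emph{both} $\eta_1$ and $\eta_2$ either vanish by (I2) or cancel via Murakami's Lemma \ref{Murakami-lemma9}, so that only single-$\eta$ subwords contribute; in the ``only if'' direction, the delicate point is executing the path-composition expansion of the single $r=k+l-1$, $p=1$ subword and verifying that the resulting $\log^\fm 2$ coefficient does not accidentally vanish.
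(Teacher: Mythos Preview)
Your proposal is correct and follows essentially the same route as the paper's proof, which is split into Prop.~\ref{prop:DTwtOdd} (odd weight) and Prop.~\ref{prop:DTwtEven} (even weight): apply Thm.~\ref{thm-Glanois}, compute $D_1$ via Lemma~\ref{lem:D1S1bfl} and direct cuts for the $l=1$ cases, reduce the surviving single-$\eta$ subwords for $3\le r\le k+l-2$ to rational multiples of $\zeta^\fl(r)$ tensored with depth-one quotients via Lemma~\ref{lem:singleTs}, and in the even-weight case expand the unique $p=1$ cut of $D_{k+l-1}$ by path composition in $\calL$ to obtain a nonzero multiple of $T^\fl(k+l-1)\otimes\log^\fm 2$.

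One small caveat: your claim that $S^\fm(1,1)$ reduces to a nonzero multiple of $(\log^\fm 2)^2$ is not immediate (shuffle gives $S^\fm(1,1)+t^\fm(1,1)=-(\log^\fm 2)^2$, not $S^\fm(1,1)$ alone), whereas $T^\fm(1,1)=\tfrac12(\log^\fm 2)^2$ does follow directly. The paper handles both uniformly by computing $D_1$, obtaining $D_1T^\fm(1,1)=D_1S^\fm(1,1)=-\log^\fl 2\otimes\log^\fm 2$, which is the cleaner route here.
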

We break the proof of the theorem into the next two propositions according to the weight parity.

\begin{prop}\label{prop:DTwtOdd}
Suppose $k,l\in\N$ have different parities. Then we have
\begin{enumerate}
\item [\upshape{(1)}] $T^\fm(k,1)$, $S^\fm(k,1)$ and $S^\fm(1,k)$ are ramified for all even $k\in\N$, and

\item [\upshape{(2)}] $T^\fm(k,l)$ is unramified if $l\ge 2$ and $S^\fm(k,l)$ is unramified if $k,l\ge 2$.
\end{enumerate}
\end{prop}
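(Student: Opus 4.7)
The plan is to apply Theorem~\ref{thm-Glanois} in both directions: to establish ramification in part~(1) I would exhibit an odd $r<w$ for which $D_r$ produces a left factor outside $\calL_r=\calL^1_r$; to establish the unramified claims in part~(2) I would verify $D_r\in\calL_r\ot\calH_{w-r}$ for every odd $r<w$.

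The ramification of $S^\fm(1,l)$ is already contained in Lemma~\ref{lem:D1S1bfl}(2). For $T^\fm(k,1)$ and $S^\fm(k,1)$ with $k$ even, the composition $(k,1)$ is not admissible, so Lemma~\ref{lem:D1S1bfl} does not apply directly and I would compute $D_1$ from \eqref{defn:MMTV} and \eqref{defn:MMSV}. Among the length-one cuts of the word $(0;\eta_1,0_{k-1},\eta_2;1)$, every cut whose pole sits at $0$ vanishes because $I^\fl(a;0;b)=0$ for $a,b\in\{0\}\cup\Gamma_2$, a short consequence of (I2), (I4), (I5) and (I6). Only the cut isolating the terminal $\eta_2$ survives, and it contributes only when $\eta_2=-1$, producing the factor $I^\fl(0;\bar1;1)=-\log^\fl 2$ tensored with $I^\fm(0;\eta_1,0_{k-1};1)$. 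Summing over $\eta_1=\pm1$ and applying Lemma~\ref{lem:zetaBar} should yield
\begin{equation*}
D_1 T^\fm(k,1)=(2-2^{1-k})\log^\fl 2\ot\zeta^\fm(k),\qquad D_1 S^\fm(k,1)=2^{1-k}\log^\fl 2\ot\zeta^\fm(k),
\end{equation*}
which are non-zero for every $k\ge 2$, hence forcing both motivic values to be ramified since $\calL^1_1=0$.

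For part~(2), the $r=1$ condition vanishes automatically: Lemma~\ref{lem:D1S1bfl}(1) covers $T^\fm(k,l)$ (as $(k,l)$ is admissible), and Lemma~\ref{lem:D1S1bfl}(2) covers $S^\fm(k,l)$ (as $k\ge 2$). For each remaining odd $r$ with $3\le r<k+l$, I would expand $D_r$ as a sum over cuts of the word $(0;\eta_1,0_{k-1},\eta_2,0_{l-1};1)$ and argue each contribution lies in $\calL_r\ot\calH_{w-r}$. Interior cuts whose subsequence touches neither $0$ nor $1$ vanish after summing over the trapped signs via Lemma~\ref{Murakami-lemma9}. The surviving boundary cuts have left factors that are motivic single or double Euler sums of odd weight $r$; by Corollary~\ref{cor:DBreduce} these reduce modulo products to rational multiples of $\zeta^\fl(r)$ and $\zeta^\fl(\bar r)$, and $\zeta^\fl(\bar r)$ is itself a rational multiple of $\zeta^\fl(r)\in\calL_r$ through the distribution relation (Theorem~\ref{thm-distribution}). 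The corresponding quotients simplify to motivic MZVs after further applications of the distribution relation, a reduction which is available precisely thanks to the parity hypothesis $k+l$ odd.

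The main obstacle I anticipate is the bookkeeping for the boundary cuts in part~(2). The $T^\fm$-case closely parallels Murakami's proof of Lemma~\ref{lem:DTwtOdd}, but with the leading block $0_m$ ($m\ge 1$) replaced by the empty block ($m=0$), so one must recheck that the left-boundary cuts are still absorbed by Corollary~\ref{cor:DBreduce}. The $S^\fm$-case is subtler because the weighting $\eta_2\cdots\eta_d$ in \eqref{defn:MMSV}, rather than $\eta_1\cdots\eta_d$, shifts the sign analysis at the left boundary; ensuring simultaneously for every odd $r\ge 3$ that the potential $\log^\fl 2$ contributions from that side still cancel is where most of the technical work concentrates.
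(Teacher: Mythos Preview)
Your approach is the same as the paper's: compute $D_r$ for each odd $r<w$ and invoke Theorem~\ref{thm-Glanois}. Part~(1) is essentially complete and matches the paper's argument; your displayed formulas for $D_1T^\fm(k,1)$ and $D_1S^\fm(k,1)$ agree with the paper's (the paper writes the right factor as $T^\fm(k)$, $S^\fm(k)$ rather than expanding via Lemma~\ref{lem:singleTs}).

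For part~(2) your plan is sound, but the auxiliary tools you anticipate are not the ones actually needed, and one of your heuristics is inaccurate. When you enumerate the cuts of $(0;\eta_1,0_{k-1},\eta_2,0_{l-1};1)$ you will find that \emph{every} surviving left factor and every surviving right factor is a depth-one object (a value $T^\fl_a(n)$, $S^\fm(n)$, $t^\fm(n)$, etc.), so Lemma~\ref{lem:singleTs} alone shows each piece lies in $\calL_r\otimes\calH_{w-r}$. In particular: (i)~Lemma~\ref{Murakami-lemma9} is not used here --- the interior cut with left endpoint $\eta_1$, namely $I^\fl(\eta_1;0_{k-1},\eta_2,0_{s-1};0)$, has one endpoint equal to $0\notin\{\pm1\}$ and does \emph{not} vanish, so it must be kept and evaluated directly; (ii)~Corollary~\ref{cor:DBreduce} is not needed, because the only candidate cut whose subsequence contains both $\eta_1$ and $\eta_2$ has both endpoints equal to $0$ and therefore vanishes by~(I2), so no depth-two left factor ever occurs; (iii)~the distribution relation plays no role --- the right factors are already depth-one mixed values, hence rational multiples of $\zeta^\fm(w-r)$ by Lemma~\ref{lem:singleTs}, and the parity hypothesis enters only in deciding which values of $r$ and which cuts are present, not in any reduction of the quotients. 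Once you carry out the enumeration explicitly (the paper splits it into the cases $k=1$ and $k,l\ge2$), the verification is shorter and more mechanical than your outline suggests.
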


\begin{proof}
Let $a=k-1$ and $b=l-1$. We will prove the proposition by computing the derivations $D_r$ for all odd $r<k+l$.

\medskip
\noindent
I. First we consider $D_1$.

(i). If $a=0$ then $b>0$. Then by Lemma~\ref{lem:D1S1bfl}
\begin{equation}\label{equ:D1T1lS1l}
D_1 T^\fm(1,l)=0, \quad\text{and}\quad
D_1 S^\fm(1,l)=- 2 \log^\fl 2 \ot T^\fm(l)\ne 0.
\end{equation}
Thus $S^\fm(1,l)$ is ramified.

(ii). If $a\ne 0$. Then there are no nontrivial cuts for $D_1 T^\fm(k,l)$ and $D_1 S^\fm(k,l)$ except when $b=0$.
If $b=0$ then
\begin{align}\label{equ:D1Tk1}
D_1 T^\fm(k,1)=&\, \sum_{\eta_1,\eta_2} \eta_1\eta_2 I^\fl(0;\eta_2;1)\ot I^\fm(0;\eta_1,0_a;1)=\log^\fl 2 \ot T^\fm(k)\ne0,\\
D_1 S^\fm(k,1)=&\, \sum_{\eta_1,\eta_2} \eta_2 I^\fl(0;\eta_2;1)\ot I^\fm(0;\eta_1,0_a;1)=\log^\fl 2 \ot S^\fm(k)\ne 0. \label{equ:D1Sk1}
\end{align}
Thus $T^\fm(k,1)$ and $S^\fm(k,1)$ are both ramified.

\medskip
\noindent
II. Suppose $a=0,b\ge 1$, $3\le r\le b$, and both $b$ and $r$ are odd.
Then there is only one possible cut for $D_rT^\fm(1,b+1)$ shown in the following picture.

\begin{center}
\begin{tikzpicture}[scale=0.9]
\node (A0) at (0.05,0) {$0;\eta_1,\eta_2,0_{r-1},0,0_{b-r};1$};
\node (A1) at (-.5,-0.4) {${}$};
\draw (-1.6,-0.25) to (-1.6,-0.4) to (A1) node {$\cic{1}$} to (0.7,-0.4) to (.7,-0.25);
\node (A) at (0,-1.2) {Only one possible cut of $D_rT^\fm(1,b+1)$, $b\ge 1$.};
\end{tikzpicture}
\end{center}

\begin{equation*}
\ncic{1}=\sum_{\eta_1,\eta_2=\pm 1} \eta_1\eta_2I^\fl(\eta_1;\eta_2,0_{r-1};0)\ot I^\fm(0;\eta_1,0_{b-r};1)=-T^\fl_{r-1}(1)\ot t^\fm(b-r+1)
\end{equation*}
which is unramified by Lemma~\ref{lem:singleTs}. Hence, $T^\fm(1,l)$ is unramified by Thm.~\ref{thm-Glanois}.

\medskip
\noindent
III. Suppose $a,b\ge 1$ and $r\le \min\{a,b\}.$ Clearly there are no nontrivial cuts for $D_r T^\fm(k,l)$.

\medskip
\noindent
IV. Suppose $a,b\ge 1$ and $r=a+s$ is odd, where $1\le s\le b.$
Then all possible cuts are shown in the following picture.

\begin{center}
\begin{tikzpicture}[scale=0.9]
\node (A0) at (0.05,0) {$0;\eta_1,0_{b-s},0,0_{a+s-b-1},\eta_2,0_{s-1},0,0_{b-s};1$};
\node (A1) at (-1.4,-0.4) {${}$};
\node (A2) at (-0.4,0.4) {${}$};
\node (A3) at (1.1,0.6) {${}$};
\draw (-3.6,-0.25) to (-3.6,-0.4) to (A1) node {$\cic{1}$} to (0.7,-0.4) to (.7,-0.25);
\draw (-3.2,0.25) to (-3.2,0.4) to (A2) node {$\cic{2}$} to (2.3,0.4) to (2.3,0.25);
\draw (-1.6,0.25) to (-1.6,0.6) to (A3) node {$\cic{3}$} to (3.75,0.6) to (3.75,0.25);
\node (A) at (0,-1.2) {Possible cuts of $D_{a+s}T^\fm(a+1,b+1)$, $a\ne 0$.};
\end{tikzpicture}
\end{center}
Here $\ncic{1}\,${} appears if $s=1$ and $a$ is even while $\ncic{3}\,${} appears if $a+s>b$. In this case
\begin{align*}
T(\ncic{1}\,)=&\, \sum_{\eta_1,\eta_2=\pm 1} \eta_1\eta_2I^\fl(0;\eta_1,0_a;\eta_2)\ot I^\fm(0;\eta_2,0_b;1)=T^\fl(a+1)\ot S^\fm(b+1),\\
S(\ncic{1}\,)=&\, \sum_{\eta_1,\eta_2=\pm 1} \eta_2I^\fl(0;\eta_1,0_a;\eta_2)\ot I^\fm(0;\eta_2,0_b;1)=S^\fl(a+1)\ot t^\fm(b+1).
\end{align*}
For general $1\le s\le b$ we always have (after putting $c=b-s+1$)
\begin{align*}
T(\ncic{2}\,)=&\, \sum_{\eta_1,\eta_2=\pm 1} \eta_1\eta_2I^\fl(\eta_1;0_a,\eta_2,0_{s-1};0)\ot I^\fm(0;\eta_1,0_c;1)
=-T^\fl_{s-1}(a+1)\ot S^\fm(c+1),\\
T(\ncic{3}\,)=&\, \sum_{\eta_1,\eta_2=\pm 1} \eta_1\eta_2I^\fl(0;0_{a+s-b-1},\eta_2,0_b;0)\ot I^\fm(0;\eta_1,0_c;1)
=T^\fl_{a-c}(b+1)\ot T^\fm(c+1),\\
S(\ncic{2}\,)=&\, \sum_{\eta_1,\eta_2=\pm 1} \eta_2 I^\fl(\eta_1;0_a,\eta_2,0_{s-1};0)\ot I^\fm(0;\eta_1,0_c;1)
=-T^\fl_{s-1}(a+1)\ot T^\fm(c+1),\\
S(\ncic{3}\,)=&\, \sum_{\eta_1,\eta_2=\pm 1} \eta_2I^\fl(0;0_{a+s-b-1},\eta_2,0_b;0)\ot I^\fm(0;\eta_1,0_c;1)
=T^\fl_{a-c}(b+1)\ot S^\fm(c+1).
\end{align*}
All of the above are clearly unramified by Lemma~\ref{lem:singleTs}. By Thm.~\ref{thm-Glanois} both $T^\fm(k,l)$ and $S^\fm(k,l)$ are unramified if $k,l\ge 2$. This completes the proof of the proposition.
\end{proof}

\begin{prop}\label{prop:DTwtEven}
Let $k,l\in\N$ have the same parity. Then the motivic double $T$-values $T^\fm(k,l)$ and double $S$-values $S^\fm(k,l)$ are ramified.
\end{prop}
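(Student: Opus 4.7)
The plan is to exhibit a nonzero component of $D_{w-1}$ (with $w=k+l$) in $\calL^2_{w-1}\ot\Q\log^\fm 2$; since $\calH^1_1=0$, any such component obstructs the unramified criterion of Thm.~\ref{thm-Glanois}.

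Set $a=k-1$, $b=l-1$, and first assume $l\ge 2$, so $b\ge 1$. Among the two possible cuts in $D_{w-1}T^\fm(k,l)$, the cut at $p=0$ dies because its quotient is $I^\fm(0;0;1)=0$ by (I4), leaving only $p=1$. Using $I^\fm(0;1;1)=0$ and $I^\fm(0;-1;1)=-\log^\fm 2$, we obtain
\[
D_{w-1}T^\fm(k,l)=\sum_{\eta_2=\pm 1}\eta_2\, I^\fl(-1;0_a,\eta_2,0_b;1)\ot\log^\fm 2,
\]
and $D_{w-1}S^\fm(k,l)$ is the negative of this expression. To evaluate the left factor modulo products (which vanish in $\calL$), I will apply path composition (I8) with intermediate point $0$; only the two extremal splittings survive, so
\[
I^\fl(-1;0_a,\eta_2,0_b;1)\equiv I^\fl(0;0_a,\eta_2,0_b;1)+I^\fl(-1;0_a,\eta_2,0_b;0).
\]
The first piece equals $\zeta^\fl_a(b+1;\eta_2)$; path reversal (I5) and homothety (I6) with $\alpha=-1$ transform the second into $(-1)^{a+b+1}\zeta^\fl_b(a+1;-\eta_2)$, whose sign is $-1$ precisely because $a+b$ is even. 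Taking $\sum_{\eta_2}\eta_2(\cdots)$ and comparing with the defining expression \eqref{defn:MMTV} of single-depth motivic $T$-values yields
\[
\sum_{\eta_2}\eta_2\, I^\fl(-1;0_a,\eta_2,0_b;1)\equiv T^\fl_a(b+1)+T^\fl_b(a+1)\pmod{\text{products}}.
\]

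By Lemma~\ref{lem:singleTs} each summand is a rational multiple of $\zeta^\fl(w-1)$, and the combined coefficient carries the factor $(-1)^a+(-1)^b=2(-1)^a\ne 0$ precisely because $k\equiv l\pmod 2$. Since $w-1$ is odd, $\zeta^\fl(w-1)\ne 0$ in $\calL$, so $D_{w-1}T^\fm(k,l)$ and $D_{w-1}S^\fm(k,l)$ are nonzero scalar multiples of $\zeta^\fl(w-1)\ot\log^\fm 2$. As $\log^\fm 2\notin\calH^1_1=0$, Thm.~\ref{thm-Glanois} forces both values to be ramified.

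The remaining case $l=1$ forces $k$ odd. For $k\ge 2$ the $D_1$-computation of Case I(ii) in the proof of Prop.~\ref{prop:DTwtOdd} applies verbatim and gives $D_1 T^\fm(k,1)=\log^\fl 2\ot T^\fm(k)$ and $D_1 S^\fm(k,1)=\log^\fl 2\ot S^\fm(k)$, each nonzero by Lemma~\ref{lem:singleTs}. For $k=l=1$, a direct expansion of $D_1$ over the two cuts $p=0,1$, using the weight-one values of $I^\fl(\pm 1;\pm 1;\pm 1)$ computed via (I8) through the intermediate point $0$, produces a nonzero multiple of $\log^\fl 2\ot\log^\fm 2$, again giving ramification. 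The main technical subtlety is the parity bookkeeping: the sign $(-1)^{a+b+1}$ from path reversal must combine with the hypothesis $k\equiv l\pmod 2$ to make the two $T^\fl$-summands reinforce rather than cancel; in the opposite-parity setting of Prop.~\ref{prop:DTwtOdd} the same computation produces $T^\fl_a(b+1)-T^\fl_b(a+1)$, which is consistent with the unramified conclusion there because $\zeta^\fl(w-1)$ then has even weight and vanishes in $\calL$.
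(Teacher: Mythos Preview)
Your proof is correct and follows essentially the same route as the paper: for $l\ge 2$ you compute $D_{w-1}$ via path composition at $0$ to obtain a nonzero multiple of $\zeta^\fl(w-1)\ot\log^\fm 2$, exploiting the parity hypothesis to make the two $T^\fl$-terms add rather than cancel, and for $l=1$ you fall back on $D_1$ exactly as in \eqref{equ:D1Tk1}--\eqref{equ:D1Sk1} and the direct $(k,l)=(1,1)$ computation. The only differences are cosmetic: the paper treats $S^\fm(k,1)$ with $D_{w-1}$ rather than $D_1$, and your closing remark about the opposite-parity case is slightly off (there $w-1$ is even so $D_{w-1}$ is not among the derivations considered), but this is commentary, not part of the argument.
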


\begin{proof}
Suppose $a=k-1,b=l-1$ have the same parity. Let $w=k+l$. If $b=0$ and $a=0$ then
\begin{align}
D_1 T^\fm(1,1)=&\, \sum_{\eta_1,\eta_2=\pm1}\eta_1\eta_2 \Big( I^\fl(0;\eta_1;\eta_2)\ot I^\fm(0;\eta_2;1)
+ I^\fl(\eta_1;\eta_2;1)\ot I^\fm(0;\eta_1;1) \Big) \notag \\
=&\, \sum_{\eta_1,\eta_2=\pm1}\eta_1\eta_2
 I^\fl(0;\eta_2;1)\ot I^\fm(0;\eta_1;1)
=\log^\fl 2\ot \log^\fm 2       \label{equ:D1T11}
\end{align}
Thus $T^\fm(1,1)$ is ramified by Thm.~\ref{thm-Glanois}. Similarly,
\begin{equation}\label{equ:D1S11}
D_1 S^\fm(1,1) = -\log^\fl 2\ot \log^\fm 2.
\end{equation}
Hence, $S^\fm(1,1)$ is ramified

If $b=0$ and $a>0$ is even then
\begin{align*}
D_1 T^\fm(a+1,1)=&\, \sum_{\eta_1,\eta_2=\pm1}\eta_1\eta_2 I^\fl(0;\eta_2;1)\ot I^\fm(0;\eta_1,0_a;1)
= \log^\fl 2\ot T^\fm(a+1), \\
D_{w-1} S^\fm(a+1,1)=&\, \sum_{\eta_1,\eta_2=\pm1} \eta_2 \Big(I^\fl(0;\eta_1,0_a;\eta_2)\ot I^\fm(0;\eta_2;1)
+I^\fl(\eta_1;0_a,\eta_2;1)\ot I^\fm(0;\eta_1;1) \Big) \\
= S^\fl(a+1) &\, \ot \log^\fm 2-T_a^\fl(1) \ot \log^\fm 2+ T^\fl(a+1) \ot \log^\fm 2=S^\fl(a+1) \ot \log^\fm 2
\end{align*}
which implies that both $T^\fm(k,1)$ and $S^\fm(k,1)$ are ramified by Thm.~\ref{thm-Glanois}.

If $b>0$ then
\begin{align}
&\, D_{w-1}T^\fm(a+1,b+1)= \sum_{\eta_1,\eta_2=\pm1}\eta_1\eta_2 D_{w-1} I^\fm(0;\eta_1,0_a,\eta_2,0_b;1) \notag \\
=&\, \sum_{\eta_1,\eta_2=\pm1}\eta_1\eta_2 I^\fl(\eta_1;0_a,\eta_2,0_b;1)\ot I^\fm(0;\eta_1;1) \notag \\
=&\, \sum_{\eta_2=\pm1} \eta_1\eta_2 \Big(I^\fl(0;0_a,\eta_2,0_b;1)-I^\fl(0;0_b,\eta_2,0_a;\eta_1)\Big)\ot I^\fm(0;\eta_1;1) \notag \\
=&\, \Big(T_a^\fl(b+1) + T_b^\fl(a+1) \Big)\ot \log^\fm 2 \notag \\
=&\, \big((-1)^a+(-1)^b\big)\binom{a+b}{a}T^\fl(w-1) \ot \log^\fm 2  \notag \\
=&\, (-1)^a 2\binom{a+b}{a}T^\fl(w-1) \ot \log^\fm 2 \not\in \calL_{w-1}\ot \calH_1 \label{equ:Dw-1DblTwtEven}
\end{align}
by Lemma~\ref{lem:singleTs},
which implies that $T^\fm(k,l)$ is ramified by Thm.~\ref{thm-Glanois}. Similarly, if $b>0$ then
\begin{equation}\label{equ:Dw-1DblSwtEven}
-D_{w-1}S^\fm(a+1,b+1)=D_{w-1}T^\fm(a+1,b+1)=(-1)^a 2\binom{a+b}{a}T^\fl(w-1)\ot \log^\fm 2
\end{equation}
is ramified and therefore $S^\fm(k,l)$ is ramified.
\end{proof}

\section{Unramified triple $S$-values}\label{sec:TripleSunramified}
In this section, we show that the conditions in Thm.~\ref{thm:MSV}(3) are sufficient, which is rephrased in the next proposition.

\begin{prop}\label{prop:MSVunr}
If $\bfk=(k,l,m)=(O_{>1},O,E)$, or $(O_{>1},E,O_{>1})$, or $(E,1,E)$ then $S^\fm(\bfk)$ is unramified.
\end{prop}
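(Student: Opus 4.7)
By Theorem~\ref{thm-Glanois}, it suffices to verify that $D_1 S^\fm(\bfk)=0$ and that $D_r S^\fm(\bfk)\in\calL_r\ot\calH_{w-r}$ for every odd integer $r$ with $3\le r<w$, where $w=k+l+m$ and $\calH=\calH^1$ denotes the motivic MZV subspace. The vanishing of $D_1$ is immediate from Lemma~\ref{lem:D1S1bfl}(2), since each of the three admissible shapes has $k_1\ge 2$.

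For the remaining derivations, the strategy is a case analysis on the cuts $(a_p;a_{p+1},\dots,a_{p+r};a_{p+r+1})$ organised by how many of the three sign positions $\eta_1,\eta_2,\eta_3$ lie inside the subsequence. Cuts containing all three $\eta$'s yield a quotient supported entirely on $\{0,1\}$, hence an MZV. Cuts containing exactly two (adjacent) $\eta$'s leave one free sign in the quotient; summing against the prefactor $\eta_2\eta_3$ of $S^\fm$, one applies either the Murakami vanishing Lemma~\ref{Murakami-lemma9} or the distribution relation Theorem~\ref{thm-distribution} to reduce the quotient factor to a single-index motivic value of the kind handled by Lemma~\ref{lem:singleTs}, times an MZV. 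Cuts containing exactly one $\eta$ leave a depth-$2$ motivic mixed value of the form $T^\fm(k',l')$, $S^\fm(k',l')$, or $\tz^\fm(k',l')$ in the quotient, which is handled by Theorem~\ref{thm:DTDS} (combined with Theorem~\ref{thm-distribution} for $\tz^\fm$). Cuts containing no $\eta$ reduce via the trivial path axiom (I2) to elementary pieces.

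The combinatorial core of the argument is parity bookkeeping. For every cut, the parity of $r$ together with the parities of $k_1,k_2,k_3$ determines the composition of each surviving double-value quotient; the claim is that the three admissible shapes $(O_{>1},O,E)$, $(O_{>1},E,O_{>1})$, $(E,1,E)$ are precisely the ones for which every $T^\fm(k',l')$-quotient has $k'+l'$ odd, and every $S^\fm(k',l')$-quotient has $k'+l'$ odd together with $k',l'\ge 2$, so that Theorem~\ref{thm:DTDS} applies at every step. Verifying this case-by-case is mechanical but lengthy; I would present it as a sequence of pictures, one per value of $r$, mirroring the style of the analysis in \S\ref{sec:DBMixedVs}.

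The most delicate step will be the cut $r=w-1$ in the two even-weight shapes (where $w-1$ is odd). Here the quotient reduces to $I^\fm(0;\eta_1;1)$, which vanishes unless $\eta_1=-1$ and then produces $\log^\fm 2\notin\calH_1$, so the entire $\calL_{w-1}$-coefficient must be shown to vanish. My plan is to expand $I^\fl(-1;\cdots;1)$ via path composition (I8) modulo products, and then pair the two pieces $I^\fl(-1;\cdots;0)$ and $I^\fl(0;\cdots;1)$ against each other using path reversal (I5) and homothety (I6); the even-weight hypothesis forces exactly the sign needed for the resulting combination of $T^\fl_a(l,m)$-type terms to collapse in $\calL^2_{w-1}$ using the odd-weight reduction of double $T$-values in Corollary~\ref{cor:DBreduce}. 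In case (iii) this worry is absent because $w$ is odd, so $r=w-1$ is even and never enters Theorem~\ref{thm-Glanois}.
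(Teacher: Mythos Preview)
Your proposal is essentially correct and follows the same architecture as the paper: kill $D_1$ via Lemma~\ref{lem:D1S1bfl}(2), then run a case analysis on $D_r$ for odd $3\le r<w$, sorting cuts by which of the three sign positions they capture and reducing the depth-two quotients to the unramified range of Theorem~\ref{thm:DTDS} and Theorem~\ref{thm-distribution}. The paper organises instead by the range of $r$ (intermediate $r$ versus $r=w-1$) and borrows Murakami's computation of $D_rT^\fm(k,1,m)$ almost verbatim, replacing the prefactor $\eta_1\eta_2\eta_3$ by $\eta_2\eta_3$; this makes the case $(E,1,E)$ very short, whereas in your scheme you would need to check explicitly that the single-$\eta_2$ cuts there produce $\tz^\fm$-type (not $S^\fm$- or $T^\fm$-type) quotients, which is where the even quotient-weight is harmless.

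The one place where the paper is genuinely more efficient is $r=w-1$. Rather than your plan of expanding $I^\fl(-1;\cdots;1)$ by path composition and then collapsing $T^\fl_a(l,m)$-terms via Corollary~\ref{cor:DBreduce}, the paper observes directly that
\[
D_{w-1}S^\fm(k,l,m)=\sum_{\eta_2,\eta_3=\pm1}\eta_2\eta_3\,I^\fl(\bar1;0_a,\eta_2,0_b,\eta_3,0_c;1)\otimes\log^\fm 2
\]
and invokes \cite[Lemma~9]{Murakami2021} to conclude this vanishes outright. Your route does work (it amounts to verifying $T^\fl_a(b+1,c+1)=T^\fl_c(b+1,a+1)$ in $\calL$ for odd $a+b+c$, a binomial identity that Corollary~\ref{cor:DBreduce} reduces to a finite check), but it is noticeably longer than the one-line appeal to Murakami's lemma.
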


\begin{proof}
First, it follows immediately from Lemma \ref{lem:D1S1bfl} that $D_1 S^\fm(\bfk)=0$.
By Thm.~\ref{thm-Glanois} we now only need to show that the images of the motivic MSVs in the proposition
under $D_r$ are all unramified for odd $r$ such that $1<r<|\bfk|$.

If $r>1$ is odd then the same argument as in the proof of \cite[Thm.~41]{Murakami2021}
is mostly still valid and therefore $D_r S^\fm(\bfk)$ is unramified, at least when no component is 1.
However, there are some subtle differences when $1$ appears. We now consider three cases.

\medskip\noindent
(I) $1<r<k+l+m-1$.

\medskip
(I.1) When the weight is even, then both left and right factor of $D_rS^\fm(\bfk)$ are of odd weight $>1$ and depth at most two.
Thus both factors are unramified by Lemma~\ref{lem:singleTs}
and Prop.~\ref{prop:DBESreduce} if no component in the factors is 1.
If the left factor of $D_r$ has a component 1 (this can occur only when $\bfk=(k,l,m)=(O_{>1},1,E)$)
then it must have depth two and therefore the right factor has depth one and consequently the cut is unramified.
If the right factor has a component 1 and depth two, then the cut must straddle over exactly one of the $\eta$'s and
it must has the following form:
\begin{enumerate}
 \item [\upshape{(i)}] $I^\fl(0;0_s,\eta_i,0_{a_i};\eta_{i+1})\ot I^\fm(0;\dots,\eta_{i-1},0_{a_{i-1}-s},\eta_{i+1},\dots;1)
 \Longrightarrow k_\ell=1$ for some $\ell\ne i-1,i$.

\item [\upshape{(ii)}] $I^\fl(\eta_{i-1};0_{a_{i-1}-1},\eta_i,0_s;0)\ot I^\fm(0;\dots,\eta_{i-1},0_{a_i-s},\eta_{i+1},\dots;1)\Longrightarrow k_\ell=1$ for some $\ell\ne i-1,i$.

 \item [\upshape{(iii)}] $I^\fl(\eta_i;0_{a_i},\eta_{i+1},0_{a_{i+1}};\eta_{i+2})\ot I^\fm(0;\dots,\eta_i,\eta_{i+1},\dots;1) $.
\end{enumerate}
Only $\bfk=(O_{>1},O,E)$ need to be considered which involves both (i) and (iii). Let $k=a+1,m=c+1$. Then
\begin{align*}
&\, D_k S^\fm(k,1,m)\\
= &\, \sum_{\eta_1,\eta_2,\eta_3=\pm1} \eta_2\eta_3\Big( I^\fl(0;\eta_1,0_a;\eta_2)\ot I^\fm(0;\eta_2,\eta_3,0_c;1)
+ I^\fl(\eta_1;0_a,\eta_2;\eta_3)\ot I^\fm(0;\eta_1,\eta_3,0_c;1) \Big)\\
 = &\, \sum_{\eta_1,\eta_2,\eta_3=\pm1} \Big(\eta_2\eta_3 I^\fl(0;\eta_1,0_a;1)\ot I^\fm(0;\eta_2,\eta_3,0_c;1) \\
 &\, + \eta_1\eta_2\eta_3 I^\fl(1;0_a,\eta_2;0)\ot I^\fm(0;\eta_1,\eta_3,0_c;1)
+ \eta_2 I^\fl(0;0_a,\eta_2;1)\ot I^\fm(0;\eta_1,\eta_3,0_c;1) \Big)\\
 =&\, \Big(S^\fl(a+1)-T^\fl(a+1)\Big) \ot T^\fm(1,c+1) - T^\fl_a(1)\ot \tz^\fm(1,a+1)
\end{align*}
which is unramified by Lemma~\ref{lem:DTwtOdd} and Thm.~\ref{thm-distribution}. Thus $D_r S^\fm(\bfk)$ is unramified if $\bfk=(k,l,m)=(O_{>1},O,E)$ or $(O_{>1},E,O_{>1})$ with even weight and $1<r<k+l+m-1$ with $r$ odd.

\medskip
(I.2) If the weight is odd then $\bfk=(k,1,m)=(E,1,E)$. The computation is almost exactly the same as that in the proof of \cite[Lemma~41]{Murakami2021} except that we replace the factor $\eta_1\eta_2\eta_3$ by $\eta_2\eta_3$ inside all the sums for $D_r T^\fm(k_1,1,k_3)=D_r T^\fm(k,1,m)$ there (the overall sign difference is because our choices of definition of the motivic iterated integral have such a difference). Thus, among the five terms on the antepenultimate page of published version of \cite{Murakami2021}, the first, fourth and fifth terms are all unramified because the second factors are all of depth 1 and weight at least 2. The second and the third terms have the form
\begin{equation*}
\delta_{r<k} T^\fl(r)\ot \tz^\fm(k-r+1,m), \qquad \delta_{r<m} T^\fl_{r-1}(1)\ot \tz^\fm(k,m-r+1)
\end{equation*}
both of which are unramified since $k-r+1,m,k,m-r+1$ are all $\ge 2$.

\medskip\noindent
(II) $r=k+l+m-1$. Then we have
\begin{align*}
D_r S^\fm(k,l,m)
=&\, \sum_{\eta_1,\eta_2,\eta_3=\pm1} \eta_2\eta_3 I^\fl(\eta_1;0_a,\eta_2,0_b,\eta_3,0_c;1)\ot I^\fm(0;\eta_1;1) \\
=&\, \sum_{\eta_2,\eta_3=\pm1} \eta_2\eta_3 I^\fl(\bar 1;0_a,\eta_2,0_b,\eta_3,0_c;1) \ot \log^\fm 2 =0
\end{align*}
by \cite[Lemma~9]{Murakami2021}. Consequently, $S^\fm(k,l,m)$ is unramified by Thm.~\ref{thm-Glanois}.
\end{proof}

\section{Motivic triple $T$- and $S$-values: three special cases}\label{sec:TripleTspecial}
We consider three special cases in this section which have either a leading 1 or an ending 1 in the motivic triple $T$-values.

First, note that $T(k,l,1)$ all diverge and among the convergent triple $T$-values of the form $T(1,k,l)$
only $T(1,1,2)=T(4)$, $T(1,1,3)=T(1,4)$ and $T(1,2,2)=T(2,3)$ (by duality)
are unramified by the Kaneko-Tsumura Conjecture. It is straightforward to show that
$T^\fm(1,1,2)$, $T^\fm(1,1,3)$ and $T^\fm(1,2,2)$ are unramified by considering $D_1$ and $D_3$. This takes care of
Thm.~\ref{thm:MTV}(3)(i).

We now deal with all the other special cases by the next three propositions.

\begin{prop}\label{prop:TriTend1}
For any integers $k,l\in\N$, the motivic triple $S$- and $T$-values $S^\fm(k,l,1)$ and $T^\fm(k,l,1)$ are both ramified.
\end{prop}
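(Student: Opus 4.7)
The plan is to apply Thm.~\ref{thm-Glanois} by exhibiting a nonzero image under $D_1$. Since $\calL^1_1=0$, any nonzero element of $\calL^2_1\otimes\calH^2_{k+l}$ obstructs unramifiedness, so it suffices to show that $D_1 T^\fm(k,l,1)\ne 0$ and $D_1 S^\fm(k,l,1)\ne 0$. Set $a=k-1$, $b=l-1$, $w=a+b+3$, so the underlying motivic iterated integrals have the form $I^\fm(0;\eta_1,0_a,\eta_2,0_b,\eta_3;1)$.

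First I would classify the length-one cuts that can contribute. Using (I2), (I4), (I5), (I6) and the tabulation of $I^\fl(\alpha;\beta;\gamma)\in\{0,\pm\log^\fl 2\}$ for $\alpha,\beta,\gamma\in\{0,\pm 1\}$, every cut with $\beta=0$ or with $\alpha=\gamma$ vanishes. Hence the surviving candidate positions are $p=w-1$ (always), together with $p\in\{0,1\}$ when $k=1$ and $p\in\{a+1,a+2\}$ when $l=1$. When $b\ge 1$ the cut at $p=w-1$ is $I^\fl(0;\eta_3;1)=\tfrac{\eta_3-1}{2}\log^\fl 2$, and using $\sum_{\eta_3}\eta_3\cdot\tfrac{\eta_3-1}{2}=1$ this cut contributes $\log^\fl 2\otimes T^\fm(k,l)$ to $D_1 T^\fm(k,l,1)$, and $\log^\fl 2\otimes S^\fm(k,l)$ to $D_1 S^\fm(k,l,1)$.

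In the interior case $k,l\ge 2$ this is the only contribution, and the conclusion follows from the nonvanishing of $T^\fm(k,l)$ and $S^\fm(k,l)$ (by positivity of the period when admissible, or by the nonzero higher derivations computed in Propositions~\ref{prop:DTwtOdd}--\ref{prop:DTwtEven}). In each boundary subcase $k=1$ or $l=1$, I would carry out direct bookkeeping of the additional cuts using the tabulated values of $I^\fl(\pm 1;\pm 1;\pm 1)$. A routine algebraic simplification shows that the extra contributions to $D_1 T^\fm(k,l,1)$ telescope so that the uniform formula
\[
D_1 T^\fm(k,l,1)=\log^\fl 2\otimes T^\fm(k,l)
\]
persists for every $k,l\ge 1$, and similarly $D_1 S^\fm(k,l,1)=\log^\fl 2\otimes S^\fm(k,l)$ whenever $l=1$. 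The only subcase in which the clean pattern breaks is $k=1$ (for any $l\ge 1$), where one obtains the residual
\[
D_1 S^\fm(1,l,1)=\log^\fl 2\otimes\bigl(S^\fm(1,l)-2\,T^\fm(l,1)\bigr).
\]

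The main obstacle and last step is to verify that this residual is nonzero in $\calH^2_{l+1}$. I would iterate the method by applying $D_1$ once more to the bracket: combining Lemma~\ref{lem:D1S1bfl}(2) with \eqref{equ:D1Tk1} gives
\[
D_1\bigl(S^\fm(1,l)-2T^\fm(l,1)\bigr)=-4\log^\fl 2\otimes T^\fm(l),
\]
which is nonzero for every $l\ge 2$ since $T^\fm(l)=(2-2^{1-l})\zeta^\fm(l)\ne 0$ by Lemma~\ref{lem:singleTs}. The degenerate case $l=1$ is handled identically using \eqref{equ:D1T11} and \eqref{equ:D1S11}, which together yield $D_1\bigl(S^\fm(1,1)-2T^\fm(1,1)\bigr)=\log^\fl 2\otimes\log^\fm 2\ne 0$. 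Thus the residual bracket is nonzero in all cases, $D_1 S^\fm(1,l,1)\ne 0$, and Thm.~\ref{thm-Glanois} forces both $T^\fm(k,l,1)$ and $S^\fm(k,l,1)$ to be ramified for every $k,l\in\N$.
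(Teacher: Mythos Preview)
Your approach---compute $D_1$, show it is nonzero, invoke Thm.~\ref{thm-Glanois}---is exactly the paper's. Your case analysis is in fact more careful: the paper overlooks the extra cuts that arise when $k=1$ (so that your separate treatment of $S^\fm(1,l,1)$ fills a real gap), and in the $l=1$ case the paper drops a nonzero contribution from the cut $I^\fl(\eta_2;\eta_3;1)$, obtaining $\log^\fl 2\otimes t^\fm(k,1)$ and $\log^\fl 2\otimes \tz^\fm(k,1)$ where your uniform formulas $\log^\fl 2\otimes T^\fm(k,1)$ and $\log^\fl 2\otimes S^\fm(k,1)$ are the correct ones---though either version gives a nonzero $D_1$ and hence the same conclusion.
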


\begin{proof} Let $a=k-1$ and $b=l-1$. If $b=0$ then we have
\begin{align*}
&\, D_{1}T^\fm(k,1,1)
= \sum_{\eta_1,\eta_2,\eta_3=\pm1}\eta_1\eta_2\eta_3 D_{1} I^\fm(0;\eta_1,0_a,\eta_2,\eta_3;1) \\
=&\, \sum_{\eta_1,\eta_2,\eta_3=\pm1}\eta_1\eta_2\eta_3 \Big(I^\fl(0;\eta_2;\eta_3)\ot I^\fm(0;\eta_1,0_a,\eta_3;1)
 +I^\fl(\eta_2;\eta_3;1)\ot I^\fm(0;\eta_1,0_a,\eta_2;1)\Big)\\
=&\, \sum_{\eta_1,\eta_3=\pm1} \eta_1 \log^\fl 2\ot I^\fm(0;\eta_1,0_a,\eta_3;1)
 +\sum_{\eta_1,\eta_3=\pm1} I^\fl(\bar1;\eta_3;1)\ot I^\fm(0;\eta_1,0_a,\bar1;1)\\
=&\, \log^\fl 2 \ot t^\fm(a+1,1) \ne 0
\end{align*}
since $\sum_{\eta_3=\pm1} I^\fl(\bar1;\eta_3;1)=0$ by Lemma~\ref{Murakami-lemma9}. Similarly,
\begin{equation*}
D_{1}S^\fm(k,1,1)= \log^\fl 2 \ot \tz^\fm(a+1,1)\ne 0.
\end{equation*}
Thus both $S^\fm(k,1,1)$ and $T^\fm(k,1,1)$ are ramified by Thm.~\ref{thm-Glanois}.

If $b\ge 1$ then we have only one cut for $D_1$:
\begin{align*}
 D_{1}T^\fm(k,l,1)
=&\, \sum_{\eta_1,\eta_2,\eta_3=\pm1}\eta_1\eta_2\eta_3 D_{1} I^\fm(0;\eta_1,0_a,\eta_2,0_b,\eta_3;1) \\
=&\, \sum_{\eta_1,\eta_2,\eta_3=\pm1}\eta_1\eta_2\eta_3 I^\fl(0;\eta_3;1)\ot I^\fm(0;\eta_1,0_a,\eta_2,0_b;1) \\
=&\, \log^\fl 2 \ot T^\fm(a+1,b+1) \ne 0.
\end{align*}
Similarly,
\begin{equation*}
D_{1}S^\fm(k,l,1)= \log^\fl 2 \ot S^\fm(a+1,b+1) \ne 0.
\end{equation*}
Hence, both $S^\fm(k,l,1)$ and $T^\fm(k,l,1)$ are ramified by Thm.~\ref{thm-Glanois}. This completes the proof of the proposition.
\end{proof}

\begin{prop}\label{prop:TriTstart11}
For all integers $k\ge 4$ the motivic triple $T$-values $T^\fm(1,1,k)$ are ramified.
\end{prop}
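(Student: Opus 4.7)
The plan is to apply the derivation $D_3$ to $T^\fm(1,1,k)$ and extract a ramified tensor contribution coming from a depth-two motivic $t$-value with leading index $1$. The word representing $T^\fm(1,1,k)$ is $(0;\eta_1,\eta_2,\eta_3,0_{k-1};1)$, so the length-three cuts are indexed by $p\in\{0,1,\dots,k-1\}$. Running through these, only $p=1$ and $p=2$ will give nontrivial left factors: $p=0$ has both endpoints equal to $0$ and vanishes by (I2); at $p=3$ the left factor $I^\fl(\eta_3;0_3;0)$ reduces via (I5), (I6) and (I4) to $-I^\fl(0;0_3;1)=0$; for $4\le p\le k-2$ both endpoints are again $0$; and at $p=k-1$ one gets $I^\fl(0;0_3;1)=0$ by (I4).

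For the cut at $p=1$, applying (I5) and (I6) with $\alpha=\eta_1$ gives $I^\fl(\eta_1;\eta_2,\eta_3,0;0)=-I^\fl(0;0,\eta_1\eta_3,\eta_1\eta_2;1)$; relabeling $\xi=\eta_1\eta_3,\zeta=\eta_1\eta_2$ in the inner sum collapses it to $\sum_{\eta_2,\eta_3}\eta_2\eta_3\,I^\fl(\eta_1;\eta_2,\eta_3,0;0)=-T^\fl_1(1,1)$, which is independent of $\eta_1$. Hence the $p=1$ contribution is $-T^\fl_1(1,1)\ot T^\fm(k-1)$. The parallel reduction for $p=2$ yields $\sum_{\eta_3}\eta_3\,I^\fl(\eta_2;\eta_3,0,0;0)=-\eta_2\,t^\fl_2(1)$; absorbing the extra $\eta_2$ into the depth-two right factor then produces the $p=2$ contribution $-t^\fl_2(1)\ot t^\fm(1,k-2)$.

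Using the shuffle regularization (I3), Cor.~\ref{cor:DBreduce} and Lemma~\ref{lem:singleTs}, one computes $T^\fl_1(1,1)=t^\fl_2(1)=\tfrac{7}{4}\zeta^\fl(3)\ne 0$ in $\calL_3$, so
\[
D_3\,T^\fm(1,1,k) \;=\; -\tfrac{7}{4}\zeta^\fl(3)\ot\bigl(T^\fm(k-1)+t^\fm(1,k-2)\bigr).
\]
Lemma~\ref{lem:singleTs} places $T^\fm(k-1)$ in $\calH_{k-1}$, while Prop.~\ref{prop:DtAllwt} forces $t^\fm(1,k-2)$ to be ramified (its first index is $1$ and its second index $k-2\ge 2$). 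Hence $T^\fm(k-1)+t^\fm(1,k-2)\notin\calH_{k-1}$, whence $D_3\,T^\fm(1,1,k)\notin\calL_3\ot\calH_{k-1}$, and Thm.~\ref{thm-Glanois} concludes that $T^\fm(1,1,k)$ is ramified.

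The main obstacle is the bookkeeping in the path-reversal and homothety reductions of the left factors: one has to verify both that the discarded cuts genuinely vanish and that the residual $\eta$-sums at $p=1,2$ collapse cleanly onto the regularized double motivic values $T^\fl_1(1,1)$ and $t^\fl_2(1)$ rather than onto more intricate mixed objects. Once these reductions are in hand, the proof is driven by the already-established ramifiedness of $t^\fm(1,k-2)$, which propagates through the $D_3$-image to force the ramifiedness of $T^\fm(1,1,k)$ itself.
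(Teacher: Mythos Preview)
Your proof is correct and follows essentially the same approach as the paper's: both compute $D_3 T^\fm(1,1,k)$, identify the two nontrivial cuts at $p=1$ and $p=2$ yielding $-T^\fl_1(1,1)\ot T^\fm(k-1)$ and $-T^\fl_2(1)\ot t^\fm(1,k-2)$ respectively, simplify the left factors to $\tfrac{7}{4}\zeta^\fl(3)$, and conclude via the ramifiedness of $t^\fm(1,k-2)$ together with Thm.~\ref{thm-Glanois}. The only cosmetic difference is that the paper invokes Prop.~\ref{prop:MtV1s} directly for the ramifiedness of $t^\fm(1,k-2)$ while you cite Prop.~\ref{prop:DtAllwt}, which is equivalent here.
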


\begin{proof}
Write $a=k-1\ge 3$. Then
\begin{align*}
&\, D_{3}T^\fm(1,1,a+1)
= \sum_{\eta_1,\eta_2,\eta_3=\pm1}\eta_1\eta_2\eta_3 D_{3} I^\fm(0;\eta_1,\eta_2,\eta_3,0_a;1) \\
=&\, \sum_{\eta_1,\eta_2,\eta_3=\pm1}\eta_1\eta_2\eta_3 \Big(I^\fl(\eta_1;\eta_2,\eta_3,0;0)\ot I^\fm(0;\eta_1,0_{a-1};1)
 +I^\fl(\eta_2;\eta_3,0,0;0)\ot I^\fm(0;\eta_1,\eta_2,0_{a-2};1)\Big)\\
=&\, -\sum_{\eta_1,\eta_2,\eta_3=\pm1}\eta_1\eta_3 \Big(\eta_2 I^\fl(0;0,\eta_3,\eta_2;1)\ot I^\fm(0;\eta_1,0_{a-1};1)
 +I^\fl(0;0,0,\eta_3;1)\ot I^\fm(0;\eta_1,\eta_2,0_{a-2};1)\Big)\\
=&\, -T^\fl_1(1,1) \ot T^\fm(a)-T^\fl_2(1) \ot t^\fm(1,a-1)\\
=&\, \Big(T^\fl(2,1)+T^\fl(1,2)\Big) \ot (2-2^{1-a})\zeta^\fm(a)-T^\fl(3) \ot t^\fm(1,a-1)\\
=&\, -\frac74(2-2^{1-a})\zeta^\fl(3) \ot\zeta^\fm(a)-\frac74\zeta^\fl(3) \ot t^\fm(1,a-1)
\end{align*}
by Lemma~\ref{lem:singleTs} and Cor.~\ref{cor:DBreduce}.
In the above and throughout the rest of the paper, we often use substitutions
such as $\eta_3\to \eta_3\eta_1$ to convert the motivic integrals to the standard forms of motivic multiple $T$-, $t$- and $S$-values.
Since $t^\fm(1,a-1)$ is ramified by Prop.~\ref{prop:MtV1s} $D_{3}T^\fm(1,1,a+1)\not\in \calL_3\ot\calH_a$
and therefore $T^\fm(1,1,a+1)$ is ramified by Thm.~\ref{thm-Glanois}.
\end{proof}

\begin{prop}\label{prop:TriSstart11}
For all integers $k\in\N$ the motivic triple $S$-values $S^\fm(1,1,k)$ are ramified.
\end{prop}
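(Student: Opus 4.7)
The plan is to derive the ramification of $S^\fm(1,1,k)$ as a direct corollary of Lemma~\ref{lem:D1S1bfl}(2). Taking $\bfk=(1,1,k)$ (with $k\ge 2$ so that admissibility holds), the lemma immediately gives
\begin{equation*}
D_1 S^\fm(1,1,k) = -2\log^\fl 2 \ot T^\fm(1,k).
\end{equation*}
By Thm.~\ref{thm-Glanois}, for $S^\fm(1,1,k)$ to be unramified it is necessary that $D_1 S^\fm(1,1,k)=0$. So it suffices to verify that the right-hand side of the above display is nonzero in $\calL_1 \ot \calH_{k+1}$. This is nothing more than unpacking the ``Consequently'' clause of Lemma~\ref{lem:D1S1bfl}(2) with $\bfl=(1,k)$, so no new machinery beyond that lemma is required. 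Note that in contrast to the $T$-value analogue Prop.~\ref{prop:TriTstart11}, which required invoking $D_3$ and the ramification of $t^\fm(1,a-1)$, here the leading $1$ alone already generates the obstruction at the $D_1$ level.

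The only point that deserves explicit verification is the non-vanishing of the tensor factors. The element $\log^\fl 2$ is a nonzero generator in $\calL_1$, since $\log^\fm 2 = -\zeta^\fm(\bar 1)$ is not a product in $\calA^2_{>0}\cdot \calA^2_{>0}$. For the right-hand factor, the motivic double $T$-value $T^\fm(1,k)$ is nonzero in $\calH_{k+1}$ because its period $T(1,k)=\sum_{0<n_1<n_2,\, n_1 \text{ odd},\, n_2 \text{ even}} 4/(n_1 n_2^k)$ is a convergent strictly positive real number for $k\ge 2$; a vanishing motivic element would necessarily have vanishing period under $\dch$, a contradiction. Thus $D_1 S^\fm(1,1,k)\ne 0$, and Thm.~\ref{thm-Glanois} forces $S^\fm(1,1,k)$ to be ramified. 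The main (and essentially only) obstacle is this non-vanishing check, which is elementary.
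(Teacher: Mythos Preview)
Your argument for $k\ge 2$ is correct and coincides with the paper's approach: both compute $D_1 S^\fm(1,1,k)=-2\log^\fl 2\ot T^\fm(1,k)$ and conclude ramification via Thm.~\ref{thm-Glanois}. Your period argument for $T^\fm(1,k)\ne 0$ is a welcome clarification.

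However, there is a genuine gap: the proposition asserts ramification for \emph{all} $k\in\N$, including $k=1$. You explicitly restrict to $k\ge 2$ for admissibility, but $S^\fm(1,1,1)$ is still a well-defined motivic object via \eqref{defn:MMSV}, and Lemma~\ref{lem:D1S1bfl}(2) does not apply to it since $(1,1,1)$ is not admissible. The paper treats this case separately: when $k=1$ there is an extra cut $I^\fl(\eta_2;\eta_3;1)\ot I^\fm(0;\eta_1,\eta_2;1)$ coming from the right end, and one finds
\[
D_1 S^\fm(1,1,1)=-\log^\fl 2\ot\big(2T^\fm(1,1)-S^\fm(1,1)\big).
\]
To see this is nonzero one cannot use a period argument (the underlying series diverge); instead the paper applies $D_1$ again on the right factor and uses \eqref{equ:D1T11} and \eqref{equ:D1S11}, namely $D_1 T^\fm(1,1)=D_1 S^\fm(1,1)=-\log^\fl 2\ot\log^\fm 2$, to obtain $-\log^\fl 2\ot\log^\fm 2\ne 0$ on the right. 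You need to supply this case to complete the proof.
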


\begin{proof}
Write $a=k-1$. First, we assume $a\ge 1$. Then
\begin{align*}
&\, D_1 S^\fm(1,1,a+1)
= \sum_{\eta_1,\eta_2,\eta_3=\pm1} \eta_2\eta_3 D_1 I^\fm(0;\eta_1,\eta_2,\eta_3,0_a;1) \\
=&\, \sum_{\eta_1,\eta_2,\eta_3=\pm1}\eta_2\eta_3 \Big(I^\fl(0;\eta_1;\eta_2)\ot I^\fm(0;\eta_2,\eta_3,0_a;1) \\
&\, +I^\fl(\eta_1;\eta_2;\eta_3)\ot I^\fm(0;\eta_1,\eta_3,0_a;1)
 +I^\fl(\eta_2;\eta_3;0)\ot I^\fm(0;\eta_1,\eta_2,0_a;1)\Big)\\
=&\, \sum_{\eta_1,\eta_2,\eta_3=\pm1}\eta_2\eta_3 \Big(I^\fl(0;\eta_1;\eta_2)\ot I^\fm(0;\eta_2,\eta_3,0_a;1) -I^\fl(0;\eta_2;\eta_3)\ot I^\fm(0;\eta_1,\eta_3,0_a;1) \Big)\\
=&\, \sum_{\eta_1,\eta_2,\eta_3=\pm1}(\eta_2-\eta_1)\eta_3 I^\fl(0;\eta_1;\eta_2)\ot I^\fm(0;\eta_2,\eta_3,0_a;1) \\
=&\, -2\log^\fl 2 \ot T^\fm(1,a+1) \ne 0.
\end{align*}
Hence, $S^\fm(1,1,k)$ is ramified by Thm.~\ref{thm-Glanois}. If $a=0$ then
\begin{align*}
&\, D_1 S^\fm(1,1,1)
= \sum_{\eta_1,\eta_2,\eta_3=\pm1} \eta_2\eta_3 D_1 I^\fm(0;\eta_1,\eta_2,\eta_3;1) \\
=&\, \sum_{\eta_1,\eta_2,\eta_3=\pm1}\eta_2\eta_3 \Big(I^\fl(0;\eta_1;\eta_2)\ot I^\fm(0;\eta_2,\eta_3;1) \\
&\, +I^\fl(\eta_1;\eta_2;\eta_3)\ot I^\fm(0;\eta_1,\eta_3,0_a;1)
 +I^\fl(\eta_2;\eta_3;1)\ot I^\fm(0;\eta_1,\eta_2;1)\Big)\\
=&\, \sum_{\eta_1,\eta_2,\eta_3=\pm1}\Big((\eta_2-\eta_1)\eta_3 I^\fl(0;\eta_1;\eta_2)\ot I^\fm(0;\eta_2,\eta_3;1)
+ \eta_2\eta_3 I^\fl(0;\eta_3;1)\ot I^\fm(0;\eta_1,\eta_2;1) \Big) \\
=&\, -\log^\fl 2 \ot \Big(2T^\fm(1,1)-S^\fm(1,1)\Big) \ne 0
\end{align*}
since $-D_1 T^\fm(1,1)=D_1 S^\fm(1,1)=-\log^\fl 2 \ot \log^\fm 2\ne 0$ by \eqref{equ:D1T11} and \eqref{equ:D1S11}.
Hence, $S^\fm(1,1,k)$ is ramified by Thm.~\ref{thm-Glanois}.
\end{proof}

\begin{prop}\label{prop:TriTstart1}
For all integers $k,l\ge 2$ the motivic triple $T$- and $S$-values $T^\fm(1,k,l)$ (if $(k,l)\ne(2,2)$)
and $S^\fm(1,k,l)$ are ramified.
\end{prop}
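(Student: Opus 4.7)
The proposition has two independent assertions.

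For $S^\fm(1,k,l)$, apply Lemma~\ref{lem:D1S1bfl}(2) with $\bfl=(k,l)$ to obtain
$$D_1 S^\fm(1,k,l) = -2\log^\fl 2 \ot T^\fm(k,l).$$
Since $k,l\ge 2$ guarantees that $T^\fm(k,l)$ has a strictly positive period (hence is nonzero in $\calH_{k+l}$) and $\log^\fl 2\ne 0$ in $\calL_1$, the tensor is nonzero. Thm.~\ref{thm-Glanois} then forces $S^\fm(1,k,l)$ to be ramified.

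For $T^\fm(1,k,l)$, the plan is to compute $D_3$ explicitly. By Lemma~\ref{lem:D1S1bfl}(1), $D_1 T^\fm(1,k,l)=0$, so any obstruction must come from $D_r$ with odd $r>1$. Enumerating length-three cuts of the interior word $(\eta_1,\eta_2,0^{k-1},\eta_3,0^{l-1})$, many cuts vanish by (I2) (when the inner subsequence has equal boundaries and contains no $\eta_i$) or collapse after the $\eta_i$-summations via Lemma~\ref{Murakami-lemma9}. The surviving tensors split into two types: quotients that reduce to products of single motivic zeta values via Lemma~\ref{lem:singleTs} and Cor.~\ref{cor:DBreduce} (hence unramified and lying in $\calL_3\ot\calH_{w-3}$), and a distinguished quotient of the form $t^\fm(1,\cdot)$ or $T^\fm(1,\cdot)$. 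By Prop.~\ref{prop:MtV1s} (for the $t$-case) and Prop.~\ref{prop:DTwtOdd} (for the $T$-case), this distinguished quotient is ramified whenever the remaining argument is nontrivial, which our hypothesis $k,l\ge 2$ guarantees. If this ramified contribution survives, then $D_3 T^\fm(1,k,l)\notin \calL_3\ot\calH_{w-3}$, and Thm.~\ref{thm-Glanois} forces ramification of $T^\fm(1,k,l)$.

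The principal obstacle is the cancellation analysis. The exclusion $(k,l)\ne(2,2)$ is essential: by duality $T^\fm(1,2,2)=T^\fm(2,3)$ is unramified, so the distinguished ramified tensor must cancel exactly when $k=l=2$ and nowhere else. Verifying this ``only at $(2,2)$'' phenomenon amounts to tracking the coefficient of the ramified tensor as an explicit quantity depending on $k,l$ and showing it is nonzero whenever $(k,l)\ne(2,2)$. A secondary difficulty is a small-weight case analysis: when $k=2$ or $l=2$, the zero blocks between the signed slots collapse, causing several length-three cuts to coincide with or alter the shape of the distinguished cut, so these boundary cases require individual verification before the generic argument applies.
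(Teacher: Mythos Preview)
Your argument for $S^\fm(1,k,l)$ is correct and is in fact more economical than the paper's: Lemma~\ref{lem:D1S1bfl}(2) gives $D_1 S^\fm(1,k,l)=-2\log^\fl 2\ot T^\fm(k,l)\ne 0$ in one stroke, whereas the paper treats the three parity cases separately with different derivations.

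For $T^\fm(1,k,l)$, however, your plan to rely on $D_3$ throughout cannot succeed. Take for instance $T^\fm(1,4,3)$. In the word $(\eta_1,\eta_2,0,0,0,\eta_3,0,0)$ the only length-$3$ cuts that survive the axioms (I2), (I4) and Lemma~\ref{Murakami-lemma9} are at $p=1$ and $p=5$, giving
\[
D_3 T^\fm(1,4,3)= -T^\fl_2(1)\ot S^\fm(2,3)+T^\fl(3)\ot T^\fm(1,4)
= T^\fl(3)\ot\bigl(T^\fm(1,4)-S^\fm(2,3)\bigr).
\]
Both $T^\fm(1,4)$ and $S^\fm(2,3)$ are unramified by Prop.~\ref{prop:DTwtOdd}(2), so $D_3 T^\fm(1,4,3)\in\calL_3\ot\calH_5$ and no obstruction arises. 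More generally, for $k\ge 4$ the dominant surviving right-hand factor from $D_3$ is $S^\fm(k-2,l)$, which is unramified precisely when $k+l$ is odd; in that entire regime $D_3$ is blind. Your expectation of a ``distinguished ramified quotient of the form $t^\fm(1,\cdot)$ or $T^\fm(1,\cdot)$'' is also off: such quotients, when they appear at all, need not be ramified (e.g.\ $T^\fm(1,4)$ above), and you have cited Prop.~\ref{prop:DTwtOdd} for their ramification when that proposition actually asserts the opposite.

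This is why the paper does not fix a single $r$: it uses $D_k$ when $k,l$ are both odd (Case~I), $D_{w-1}$ when $k$ is odd and $l$ even (Case~II), and $D_{k+1}$ when $k$ is even (Case~III), with a further split at $l=2$. Your choice $r=3$ agrees with the paper's only for $k\in\{2,3\}$. The ``principal obstacle'' you flag is not merely bookkeeping to be filled in later; the approach itself must change.
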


\begin{proof}
Let $a=k-1$ and $b=l-1$. We break the proposition into three cases:
\begin{center}
I. $a$ and $b$ are even; \quad II. $a$ is even, $b$ is odd; \quad III. $a$ is odd.
\end{center}

\medskip
\noindent
\textbf{Case I}. Suppose $a,b\ge 2$ and $a$ and $b$ are both even.
Since $a+b\ge 4$ then we have the following possible cuts for $D_{a+1}$.

\begin{center}
\begin{tikzpicture}[scale=0.9]
\node (A0) at (0.05,0) {$0;\eta_1,\eta_2,0_{b-1},0,0_{a-b},\eta_3,0,0_{b-1};1$};
\node (A1) at (-0.8,-0.4) {${}$};
\node (A2) at (-0.6,0.4) {${}$};
\node (A3) at (1.3,0.6) {${}$};
\draw (-2.7,-0.25) to (-2.7,-0.4) to (A1) node {$\cic{1}$} to (1.1,-0.4) to (1.1,-0.25);
\draw (-2.0,0.25) to (-2.0,0.4) to (A2) node {$\cic{2}$} to (1.65,0.4) to (1.65,0.25);
\draw (-.4,0.25) to (-.4,0.6) to (A3) node {$\cic{3}$} to (3.2,0.6) to (3.2,0.25);
\node (A) at (0,-1.2) {Possible cuts of $D_{a+1}T^\fm(1,a+1,b+1)$, $a$ even and $b\ge 2$.};
\end{tikzpicture}
\end{center}

First, since $a$ is even we observe that
$$
I^\fm(\eta_1;\eta_2,0_a;0)=-\zeta^\fm_a(1;\eta_1\eta_2)=-\zeta^\fm(a+1;\eta_1\eta_2)=-I^\fm(0;\eta_2,0_a;\eta_1).
$$
We get
\begin{align*}
T(\ncic{1}\,)=&\, \sum_{\eta_1,\eta_2,\eta_3=\pm1}\eta_1\eta_2\eta_3 I^\fl(\eta_1;\eta_2,0_a;\eta_3) \ot I^\fm(0;\eta_1,\eta_3,0_b;1) \\
=&\, \sum_{\eta_1,\eta_2,\eta_3=\pm1}\eta_1\eta_2\eta_3 \Big(I^\fl(0;\eta_2,0_a;\eta_3)-I^\fl(0;0_a,\eta_2;\eta_1) \Big) \ot I^\fm(0;\eta_1,\eta_3,0_b;1) \\
=&\, \sum_{\eta_1,\eta_2,\eta_3=\pm1} \Big(\eta_1\eta_2I^\fl(0;\eta_2,0_a;1)-\eta_2\eta_3I^\fl(0;0_a,\eta_2;1) \Big) \ot I^\fm(0;\eta_1,\eta_3,0_b;1) \\
=&\, T^\fl(a+1) \ot \Big(t^\fm(1,b+1)-S^\fm(1,b+1)\Big), \\
T(\ncic{2}\,)=&\, \sum_{\eta_1,\eta_2,\eta_3=\pm1}\eta_1\eta_2\eta_3 I^\fl(\eta_2;0_a,\eta_3;0) \ot I^\fm(0;\eta_1,\eta_2,0_b;1)
=- T^\fl(a+1) \ot t^\fm(1,b+1).
\end{align*}
Since $b$ is even, by Lemma~\ref{lem:singleTs} and Prop.~\ref{prop:DTwtEven} we see that
\begin{equation*}
 T( \ncic{1}+\ncic{2}\,)=
\left\{
 \begin{array}{ll}
 - T^\fl(a+1) \ot S^\fm(1,b+1) \in \calL_{a+1} \ot (\calH_{b+1}^2 \setminus \calH_{b+1}) , \phantom{\frac12}& \hbox{if $a>0$;} \\
 - \log^\fl 2 \ot S^\fm(1,b+1) \in (\calL_1^2 \setminus\calL_1) \ot (\calH_{b+1}^2 \setminus \calH_{b+1}) ,\phantom{\frac12} & \hbox{if $a=0$.}
 \end{array}
\right.
\end{equation*}
If $a\ge b$ then we need to consider the additional term
\begin{align*}
T(\ncic{3}\,)= &\, \sum_{\eta_1,\eta_2,\eta_3=\pm1}\eta_1\eta_2\eta_3 I^\fl(0;0_{a-b},\eta_3,0_b;1) \ot I^\fm(0;\eta_1,\eta_2,0_{b};1) \\
=&\, T^\fl_{a-b}(b+1) \ot T^\fm(1,b+1)
=\binom{a}{b}T^\fl(a+1) \ot T^\fm(1,b+1)
\end{align*}
by Lemma~\ref{lem:singleTs} since $(-1)^{a-b}=1$. Thus
\begin{equation*}
 T(\ncic{1}+\ncic{2}+\ncic{3}\,)=
 T^\fl(a+1) \ot \left(\binom{a}{b}T^\fm(1,b+1)-S^\fm(1,b+1)\right).
\end{equation*}
If $a\ne b$, by \eqref{equ:Dw-1DblSwtEven} we have
\begin{equation*}
D_{b+1} \left(\binom{a}{b}T^\fm(1,b+1)-S^\fm(1,b+1)\right)=2\left(\binom{a}{b}+1\right)T^\fl(b+1) \ot \log^\fm 2
\end{equation*}
which is ramified. Thus $T^\fm(1,k,l)$ is ramified in this case.

For $S^\fm(1,k,l)$, if $a+b\ge 4$ then similarly we have by Prop.~\ref{prop:DTwtEven}
\begin{equation*}
 S(\ncic{1}+\ncic{2}\,)=
\left\{
 \begin{array}{ll}
 - T^\fl(a+1) \ot T^\fm(1,b+1) \in \calL_{a+1} \ot (\calH_{b+1}^2 \setminus \calH_{b+1}) , \phantom{\frac12}& \hbox{if $a>0$;} \\
 - \log^\fl 2 \ot T^\fm(1,b+1) \in (\calL_1^2 \setminus\calL_1) \ot (\calH_{b+1}^2 \setminus \calH_{b+1}) ,\phantom{\frac12} & \hbox{if $a=0$.}
 \end{array}
\right.
\end{equation*}
If $a\ge b$ then we need to consider the additional term
\begin{align*}
S(\ncic{3}\,)=\binom{a}{b}T^\fl(a+1) \ot S^\fm(1,b+1).
\end{align*}
Thus
\begin{equation*}
 S(\ncic{1}+\ncic{2}+\ncic{3}\,)=
 T^\fl(a+1) \ot \left(\binom{a}{b}S^\fm(1,b+1)+T^\fm(1,b+1)\right).
\end{equation*}
Similar to the above we see this is ramified and therefore so is $S^\fm(1,k,l)$.

\medskip
\noindent
\textbf{Case II}. Suppose $a,b\ge 1$, $a$ is even and $b$ is odd.

(i) Assume $a>b$. Then
\begin{align*}
&\,  D_{a+1}S^\fm(1,a+1,b+1)\\
=&\, \sum_{\eta_1,\eta_2,\eta_3=\pm1} \eta_2\eta_3 I^\fl(\eta_1;\eta_2,0_a;\eta_3) \ot I^\fm(0;\eta_1,\eta_3,0_b;1)\\
&\, +\sum_{\eta_1,\eta_2,\eta_3=\pm1} \eta_2\eta_3 I^\fl(\eta_2;0_a,\eta_3;0) \ot I^\fm(0;\eta_1,\eta_2,0_b;1)\\
&\, +\sum_{\eta_1,\eta_2,\eta_3=\pm1} \eta_2\eta_3 I^\fl(0;0_{a-b},\eta_3,0_b;1) \ot I^\fm(0;\eta_1,\eta_2,0_b;1)\\
=&\, \sum_{\eta_1,\eta_2,\eta_3=\pm1} \eta_2\eta_3 \big(I^\fl(\eta_1;\eta_2,0_a;0)+I^\fl(0;\eta_2,0_a;\eta_3)\big) \ot  I^\fm(0;\eta_1,\eta_3,0_b;1)\\
&\, -\sum_{\eta_1,\eta_2,\eta_3=\pm1}\eta_2\eta_3 I^\fl(0;\eta_3,0_a;\eta_2) \ot I^\fm(0;\eta_1,\eta_2,0_b;1)+T^\fl_{a-b}(b+1)\ot S^\fm(1,b+1)\\
=&\, -\sum_{\eta_1,\eta_2,\eta_3=\pm1} \eta_2\eta_3 I^\fl(0;0_a,\eta_2;\eta_1)\ot  I^\fm(0;\eta_1,\eta_3,0_b;1)  +T^\fl_{a-b}(b+1)\ot S^\fm(1,b+1)\\
=&\, -T^\fl_{a}(1)\ot T^\fm(1,b+1) +T^\fl_{a-b}(b+1)\ot S^\fm(1,b+1) \\
=&\, -T^\fl(a+1)\ot\bigg( T^\fm(1,b+1)+\binom{a+b}{2b} S^\fm(1,b+1)\bigg).
\end{align*}
Similar computations shows that
\begin{align*}
 D_{a+1}T^\fm(1,a+1,b+1)=&\, -T^\fl(a+1)\ot\bigg(S^\fm(1,b+1)+\binom{a+b}{2b} T^\fm(1,b+1)\bigg).
\end{align*}
By Prop.~\ref{prop:DTwtOdd} $T^\fm(1,b+1)$ is unramified while $S^\fm(1,b+1)$ is ramified by Lemma~\ref{lem:D1S1bfl} we see that
both $S^\fm(1,a+1,b+1)$ and $T^\fm(1,a+1,b+1)$ are ramified in this case.

(ii)  Assume $b>a$. Then
\begin{align*}
&\,  D_{b+2}S^\fm(1,a+1,b+1)\\
=&\, \sum_{\eta_1,\eta_2,\eta_3=\pm1} \eta_2\eta_3 I^\fl(\eta_1;\eta_2,0_a,\eta_3,0_{b-a};0) \ot I^\fm(0;\eta_1,0_a;1)\\
&\, +\sum_{\eta_1,\eta_2,\eta_3=\pm1} \eta_2\eta_3 I^\fl(\eta_2;0_a,\eta_3,0_{b-a+1};0) \ot I^\fm(0;\eta_1,\eta_2,0_{a-1};1) \\
&\, +\sum_{\eta_1,\eta_2,\eta_3=\pm1} \eta_2\eta_3 I^\fl(0;0,\eta_3,0_{b};1) \ot I^\fm(0;\eta_1,\eta_2,0_{a-1};1) \\
=&\,  - T^\fl_{b-a}(1,a+1)\ot \tz^\fm(a+1) -T^\fl_{b-a+1}(a+1)\ot \tz^\fm(1,a)+T^\fl_{1}(b+1)\ot S^\fm(1,a) \\
=&\,   T^\fl(b+2)\ot \Big[c_1\tz^\fm(a+1)-\binom{b+1}{a}\tz^\fm(1,a)-(b+1) S^\fm(1,a) \Big]
\end{align*}
by (I3) and Cor.~\ref{cor:DBreduce}, where $c_1$ and $c_2$ are some constants. Since both $\tz^\fm(a+1)$ and $\tz^\fm(1,a+1)$ are unramified
while $S^\fm(1,a)$ is ramified by Lemma~\ref{lem:D1S1bfl}, we see that $S^\fm(1,a+1,b+1)$ is ramified in this case.

Similar computations shows that
\begin{align*}
 D_{b+2}T^\fm(1,a+1,b+1)=&\,  T^\fl(b+2)\ot \Big[c_1 T^\fm(a+1)-\binom{b+1}{a} t^\fm(1,a)-(b+1) T^\fm(1,a) \Big].
\end{align*}
Hence $T^\fm(1,a+1,b+1)$ is ramified since both $T^\fm(a+1)$ and $T^\fm(1,a)$ are unramified by Prop.~\ref{prop:DTwtOdd} while $t^\fm(1,a)$ is ramified by Lemma~\ref{lem:Charlton}.

\medskip
\noindent
\textbf{Case III}.
Suppose $a,b\ge 1$ and $a$ is odd. It is easy to see that
\begin{align*}
D_1 S^\fm(1,a+1,b+1)=&\, \sum_{\eta_1,\eta_2,\eta_3=\pm1} \eta_2\eta_3
\Big(I^\fl(0;\eta_1;\eta_2) \ot I^\fm(0;\eta_2,0_a,\eta_3,0_b;1) \\
&\, \qquad +I^\fl(\eta_1;\eta_2;0) \ot I^\fm(0;\eta_1,0_a,\eta_3,0_b;1) \Big)\\
=&\, \sum_{\eta_1,\eta_2,\eta_3=\pm1}(\eta_2-\eta_1)\eta_3 I^\fl(0;\eta_1;\eta_2) \ot I^\fm(0;\eta_2,0_a,\eta_3,0_b;1) \\
=&\, -\log^\fl 2\ot T^\fm(a+1,b+1)
\end{align*}
which is ramified. Thus $S^\fm(1,a+1,b+1)$ is ramified.

\bigskip
Now we turn to $T^\fm(1,a+1,b+1)$. We divide into two cases: (A) $b=1$ (B) $b\ge 2$.

\medskip
\noindent
(A) If $b=1$ then $a\ge 3$ and
\begin{align*}
D_1 T^\fm(1,a+1,2) = \sum_{\eta_1,\eta_2,\eta_3=\pm1} \eta_1 \eta_2\eta_3 I^\fl(0;\eta_3;1) \ot I^\fm(0;\eta_1,\eta_2,0_a;1)
=\log^\fl 2\ot T^\fm(1,a+1)
\end{align*}
yielding that $T^\fm(1,a+1,2)$ is ramified in this case.

\medskip
\noindent
(B) Assume $b\ge 2$. We have the following possible cuts for $D_{a+2}T^\fm(1,a+1,b+1)$.

\begin{center}
\begin{tikzpicture}[scale=0.9]
\node (A0) at (0.05,0) {$0;\eta_1,\eta_2,0_{b-2},0,0_{a-b+1},\eta_3,0,0,0_{b-2};1$};
\node (A1) at (-1.2,-0.4) {${}$};
\node (A2) at (-1.0,0.4) {${}$};
\node (A3) at (-0.1,-0.6) {${}$};
\node (A4) at (1.4,0.6) {${}$};
\draw (-3.6,-0.25) to (-3.6,-0.4) to (A1) node {$\cic{1}$} to (1.2,-0.4) to (1.2,-0.25);
\draw (-3.0,0.25) to (-3.0,0.4) to (A2) node {$\cic{2}$} to (1.7,0.4) to (1.7,0.25);
\draw (-2.4,-0.25) to (-2.4,-0.6) to (A3) node {$\cic{3}$} to (2.2,-0.6) to (2.2,-0.25);
\draw (-.8,0.25) to (-.8,0.6) to (A4) node {$\cic{4}$} to (3.6,0.6) to (3.6,0.25);
\node (A) at (0,-1.2) {Possible cuts of $D_{a+2}T^\fm(1,a+1,b+1)$, $a$ odd and $b\ge2$.};
\end{tikzpicture}
\end{center}

We get
\begin{align*}
\ncic{1}=&\, \sum_{\eta_1,\eta_2,\eta_3=\pm1}\eta_1\eta_2\eta_3 I^\fl(0;\eta_1,\eta_2,0_a;\eta_3) \ot I^\fm(0;\eta_3,0_b;1)
= T^\fl(1,a+1) \ot T^\fm(b+1) , \\
\ncic{2}=&\, \sum_{\eta_1,\eta_2,\eta_3=\pm1}\eta_1\eta_2\eta_3 I^\fl(\eta_1;\eta_2,0_a,\eta_3;0) \ot I^\fm(0;\eta_1,0_b;1) \\
=&\, -\sum_{\eta_1,\eta_2,\eta_3=\pm1}\eta_1\eta_2\eta_3 I^\fl(0;\eta_1,0_a,\eta_2;\eta_3) \ot I^\fm(0;\eta_3,0_b;1)
= - T^\fl(a+1,1) \ot T^\fm(b+1) .
\end{align*}
Since $a$ is odd so that both $T^\fl(1,a+1)$ and $T^\fl(a+1,1)$ have odd weight we see that
\begin{equation}\label{equ:1and2}
 \ncic{1}+\ncic{2}=\Big(T^\fl(1,a+1)-T^\fl(a+1,1)\Big) \ot T^\fm(b+1) \in \calL_{a+2}\ot \calH_{b+1}
\end{equation}
by Lemma~\ref{lem:DTwtOdd}. Further
\begin{align}
\ncic{3}=&\, \sum_{\eta_1,\eta_2,\eta_3=\pm1}\eta_1\eta_2\eta_3
 I^\fl(\eta_2;0_a,\eta_3,0;0) \ot I^\fm(0;\eta_1,\eta_2,0_{b-1};1) \notag\\
=&\,- \sum_{\eta_1,\eta_2,\eta_3=\pm1}\eta_1 \eta_3 I^\fl(0;0,\eta_2\eta_3,0_a;\eta_2) \ot I^\fm(0;\eta_1,\eta_2,0_{b-1};1) \notag \\
=&\, - T^\fl_1(a+1) \ot t^\fm(1,b) = (a+1)T^\fl(a+2)\ot t^\fm(1,b) \label{equ:DT1kl3}
\end{align}
by Lemma~\ref{lem:singleTs}, which is ramified by Prop.~\ref{prop:MtV1s}.

\smallskip
\noindent
\textbf{(1)} If $b\ge2$ and $a<b-1$ then $\ncic{4}\,${} does not appear and we see that
$D_{a+2}T^\fm(1,a+1,b+1)=\ncic{1}+\ncic{2}+\ncic{3}\,${} is ramified by \eqref{equ:1and2} and \eqref{equ:DT1kl3}.

\smallskip
\noindent
\textbf{(2)} If $a\ge b-1\ge1$ then $\ncic{4}\,${} appears and we get
\begin{align*}
\ncic{4}=&\, \sum_{\eta_1,\eta_2,\eta_3=\pm1}\eta_1\eta_2\eta_3 I^\fl(0;0_{a-b+1},\eta_3,0_b;1) \ot I^\fm(0;\eta_1,\eta_2,0_{b-1};1) \\
=&\, T^\fl_{a-b+1}(b+1) \ot T^\fm(1,b)
= (-1)^{b}\binom{a+1}{b}T^\fl(a+2) \ot T^\fm(1,b).
\end{align*}
By combining $\ncic{3}\,${} and $\ncic{4}\,${} we get
\begin{equation*}
\ncic{3}+\ncic{4}= T^\fl(a+2) \ot \bigg( (a+1) t^\fm(1,b)+(-1)^{b}\binom{a+1}{b} T^\fm(1,b)\bigg).
\end{equation*}
Since $D_1 T^\fm(1,b)=0$ we see that by Lemma~\ref{lem:Charlton}
\begin{equation*}
D_1\left((a+1) t^\fm(1,b)+ (-1)^{b}\binom{a+1}{b} T^\fm(1,b)\right)
=2 (a+1) \log^\fl 2\ot t^\fm(b)\ne 0.
\end{equation*}
Thus $\ncic{3}+\ncic{4}\in\calL_{a+2}\ot (\calH_{b+1}^2\setminus \calH_{b+1} )$.
Combining this with \eqref{equ:1and2} we find that $D_{a+2}T^\fm(1,a+1,b+1)\not\in \calL_{a+2}\ot \calH_{b+1}$.

Putting (A) and (B) together we can conclude that if $a+b\ge 3$ and $a$ is odd then
$T^\fm(1,a+1,b+1)$ is ramified by Thm.~\ref{thm-Glanois}.
\end{proof}

\section{Triple $T$- and $S$-values: the general case}\label{sec:TripleTandS}
We now consider motivic triple $T$-values $T^\fm(k,l,m)$ and motivic triple $S$-values $S^\fm(k,l,m)$. In view of Prop.~\ref{prop:TriTend1} and Prop.~\ref{prop:TriTstart1} we may assume that $a=k-1\ge 1,c=m-1\ge 1$ and $b=l-1\ge0$ such that $w=k+l+m$. Since $T^\fm(O_{>1},E,O_{>1})$ are all unramified by Murakami (see \cite[Thm.~41]{Murakami2021}),
$S^\fm(E,O_{>1},O_{>1})$ and $S^\fm(O_{>1},E,O_{>1})$ are all unramified by Prop.~\ref{prop:MSVunr},
we only consider the following three cases for triple $T^\fm$-values and two cases for triple $S^\fm$-values when $w$ is even:
\begin{center}
 $f^\fm(E,E,E), \quad f^\fm(E,O,O_{>1}), \quad T^\fm(O_{>1},O,E),$
\end{center}
where $f=T$ or $S$. When $w$ is odd, since
$T^\fm(E,1,E)$ are all unramified by Murakami (see \cite[Thm.~41]{Murakami2021})
and $S^\fm(E,1,E)$ are all unramified by Prop.~\ref{prop:MSVunr}
we only consider the following three cases for triple $T^\fm$-values and three cases for triple $S^\fm$-values
when $w$ is odd:
\begin{center}
 $f^\fm(E,E,O_{>1}), \quad f^\fm(O_{>1},E,E), \quad f^\fm(E,O_{>1},E)$,
\end{center}
where $f=T$ or $S$.

\medskip
\noindent
\textbf{Case I}. (This includes $f^\fm(E,E,E)$,  $f^\fm(E,E,O_{>1})$ and $T^\fm(O_{>1},O,E)$.)
Suppose $a$ and $b$ have the same parity.
Then we have the following possible cuts for $D_{a+b+1}T^\fm(k,l,m)$.

\begin{center}
\begin{tikzpicture}[scale=0.9]
\node (A0) at (0.05,0) {$0;\eta_1,\underbrace{0,\dots,0}_{\scriptstyle \text{$a$ times}},\eta_2,\underbrace{0,\dots,0}_{\scriptstyle \text{$b$ times}},\eta_3,0_a,0,0_{c-a-1};1$};
\node (A1) at (-1.2,-0.8) {${}$};
\node (A2) at (0.5,0.7) {${}$};
\node (A3) at (1.5,-0.55) {${}$};
\node (A4) at (2.1,.9) {${}$};
\node (A5) at (0.9,1.1) {${}$};
\draw (-3.5,-0.1) to (-3.5,-0.8) to (A1) node {$\cic{1}$} to (1.2,-0.8) to (1.2,-0.1);
\draw (-1.2,0.55) to (-1.2,0.7) to (A2) node {$\cic{2}$} to (2.25,0.7) to (2.25,0.53);
\draw (-1.2,-0.1) to (-1.2,-0.55) to (A3) node {$\cic{3}$} to (4.2,-0.55) to (4.2,-0.1);
\draw (0.1,0.55) to (0.1,.9) to (A4) node {$\cic{4}$} to (4.2,.9) to (4.2,0.53);
\draw (-2.4,0.55) to (-2.4,1.1) to (A5) node {$\cic{5}$} to (4.2,1.1) to (4.2,0.53);
\node (A) at (0,-1.3) {Possible cuts of $D_{a+b+1} f^\fm(k,l,m)$ ($f=T$ or $S$), $a\equiv b\pmod{2}$.};
\end{tikzpicture}
\end{center}

We have
\begin{align*}
D_{a+b+1}T^\fm(k,l,m)=&\, \sum_{\eta_1,\eta_2,\eta_3=\pm1}\eta_1\eta_2\eta_3 \bigg[I^\fl(\eta_1;0_a,\eta_2,0_b;\eta_3) \ot I^\fm(0;\eta_1,\eta_3,0_c;1) \\
&\, +\delta_{c>a+1} I^\fl(\eta_2;0_b,\eta_3,0_a;0) \ot I^\fm(0;\eta_1,0_a,\eta_2,0_{c-a};1) \\
&\, +\delta_{a=c} I^\fl(\eta_2;0_b,\eta_3,0_a;1) \ot I^\fm(0;\eta_1,0_a,\eta_2;1) \\
&\, +\delta_{c+1>a} I^\fl(0;0_{b-c+a},\eta_3,0_c;1) \ot I^\fm(0;\eta_1,0_a,\eta_2,0_{c-a};1) \\
&\, + \delta_{c+1<a} I^\fl(0;0_{a-c-1},\eta_2,0_b,\eta_3,0_c;1) \ot I^\fm(0;\eta_1,0_{c+1};1) \bigg].
\end{align*}
Since $a$ and $b$ have the same parity, we have
\begin{align*}
\ncic{1}=&\, \sum_{\eta_1,\eta_2,\eta_3=\pm1}\eta_1\eta_2\eta_3 \Big(I^\fl(0;0_a,\eta_2,0_b;\eta_3)+I^\fl(\eta_1;0_a,\eta_2,0_b;0)\Big)\ot I^\fm(0;\eta_1,\eta_3,0_c;1) \\
=&\, \sum_{\eta_1,\eta_2,\eta_3=\pm1} \Big(\eta_1\eta_2I^\fl(0;0_a,\eta_2,0_b;1)+\eta_2\eta_3I^\fl(1;0_a,\eta_2,0_b;0)\Big)\ot I^\fm(0;\eta_1,\eta_3,0_c;1) \\
=&\, T^\fl_a(b+1) \ot \Big( t^\fm(1,c+1)-S^\fm(1,c+1)\Big)\\
=&\,(-1)^a \binom{a+b}{a} T^\fl(a+b+1)\ot \Big( t^\fm(1,c+1)-S^\fm(1,c+1)\Big),\\
\ncic{2}=&\, -\sum_{\eta_1,\eta_2,\eta_3=\pm1} \eta_3 I^\fl(0;0_a,\eta_3,0_b;1) \ot \eta_1 I^\fm(0;\eta_1,0_a,\eta_2,0_{c-a};1) \\
=&\, - T^\fl_a(b+1) \ot t^\fm(a+1,c-a+1)= -(-1)^a \binom{a+b}{a} T^\fl(a+b+1)\ot t^\fm(a+1,c-a+1),\\
\ncic{3}=&\, \sum_{\eta_1,\eta_3=\pm1} \eta_1\eta_3 \Big(I^\fl(0;0_b,\eta_3,0_a;1)+\eta_2 I^\fl(1;0_b,\eta_3,0_a;0)\Big)\ot I^\fm(0;\eta_1,0_a,\eta_2;1) \\
=&\,T^\fl_b(a+1) \ot t^\fm(a+1,1) - T^\fl_a(b+1) \ot T^\fm(a+1,1)\\
= &\, -(-1)^a 2 \binom{a+b}{a} T^\fl(a+b+1)\ot \Big(t^\fm(a+1,1)-T^\fm(a+1,1)\Big),\\
\ncic{4}=&\, \sum_{\eta_1,\eta_2,\eta_3=\pm1}\eta_1\eta_2\eta_3 I^\fl(0;0_{b-c+a},\eta_3,0_c;1) \ot I^\fm(0;\eta_1,0_a,\eta_2,0_{c-a};1) \\
=&\, T^\fl_{b-c+a}(c+1)\ot T^\fm(a+1,c-a+1)
= (-1)^{c} \binom{a+b}{c} T^\fl(a+b+1)\ot T^\fm(a+1,c-a+1),\\
\ncic{5}=&\, T^\fl_{a-c-1}(b+1,c+1)\ot T^\fm(c+2)\\
=&\, (-1)^{a-c-1} \binom{a+b}{c} T^\fl(a+b-c,c+1)\ot T^\fm(c+2).
\end{align*}
Note that $\ncic{5}\,${} is clearly unramified while
$\ncic{2}\,${} is unramified by Lemma~\ref{lem:MtVall>1} since $a\ge1$ and $c-a+1\ge 2$ if $c\ge a+1$.
Thus by putting the ramified contributions together we get
$\ncic{1}+\ncic{3}+\ncic{4}=T^\fl(a+b+1)\ot X$ where
\begin{align*}
X:=&\,
(-1)^a \binom{a+b}{a} \Big( t^\fm(1,c+1)-S^\fm(1,c+1)\Big)\\
&\, -(-1)^a 2 \binom{a+b}{a} \delta_{a=c}\Big(t^\fm(a+1,1)-T^\fm(a+1,1)\Big) \\
&\,+ (-1)^{c} \binom{a+b}{c} \delta_{c+1>a} T^\fm(a+1,c-a+1).
\end{align*}
By Lemma~\ref{lem:Charlton} and Lemma~\ref{lem:D1S1bfl}(2)
\begin{equation} \label{equ:D1t1c+1}
D_1 t^\fm(1,c+1)=2 \log^\fl 2\ot t^\fm(c+1), \qquad D_1 S^\fm(1,c+1)=-2 \log^\fl 2\ot t^\fm(c+1).
\end{equation}
If $c\ne a$ then $D_1 t^\fm(a+1,c-a+1)=D_1 T^\fm(a+1,c-a+1)=0$ by
Lemma~\ref{lem:Charlton} and Lemma~\ref{lem:D1S1bfl}(1). Hence,
\begin{align*}
D_1 X= (-1)^a (2+2) \binom{a+b}{a} \log^\fl 2\ot t^\fm(a+1)\ne 0.
\end{align*}
If $a=c$, then we have
\begin{align*}
X=(-1)^a \binom{a+b}{a} \Big( 3 T^\fm(a+1,1)+t^\fm(1,a+1)-S^\fm(1,a+1)-2t^\fm(a+1,1)\Big).
\end{align*}
Thus if $a=c$ then by \eqref{equ:D1t1c+1} and \eqref{equ:D1T11}
\begin{align*}
D_1 X=9 (-1)^a \binom{a+b}{a} \log^\fl 2\ot t^\fm(a+1)\ne 0.
\end{align*}
In any case we see that $D_{a+b+1}T^\fm(k,l,m)$ is ramified. Hence,
$T^\fm(k,l,m)$ is ramified when $k\equiv l \pmod{2}$.

By the same argument we see that if $(k,l,m)=(E,E,O)$ then
\begin{align*}
D_{a+b+1} S^\fm(k,l,m)=&\, (-1)^a \binom{a+b}{a} T^\fl(a+b+1)\ot \Big( \tz^\fm(1,c+1)-T^\fm(1,c+1)\Big)\\
&\,-(-1)^a \binom{a+b}{a} T^\fl(a+b+1)\ot \delta_{c>a+1} \tz^\fm(a+1,c-a+1) \\
&\,-(-1)^a 2 \binom{a+b}{a} T^\fl(a+b+1)\ot \delta_{a=c} \Big(\tz^\fm(a+1,1)-S^\fm(a+1,1)\Big)\\
&\,+(-1)^{c} \binom{a+b}{c} T^\fl(a+b+1)\ot \delta_{c+1>a} S^\fm(a+1,c-a+1)\\
&\,+(-1)^{a-c-1} \binom{a+b}{c} T^\fl(a+b-c,c+1)\ot \delta_{c+1<a} S^\fm(c+2),
\end{align*}
where the second and the fifth terms are again unramified. Now that $a$ is odd and $c$ is even
the third term cannot appear. By Thm.~\ref{thm-distribution} the only ramified terms are produced by
$(-1)^a \binom{a+b}{a} T^\fl(a+b+1)\ot Y$, where
\begin{equation*}
 Y=\binom{a+b}{a} T^\fm(1,c+1)+ \binom{a+b}{c} \delta_{c+1>a} S^\fm(a+1,c-a+1)
\end{equation*}
for which by \eqref{equ:Dw-1DblTwtEven} and \eqref{equ:Dw-1DblSwtEven} we have
\begin{align*}
D_{c+1}Y
=\bigg(2\binom{a+b}{a}- \delta_{c>a} 2\binom{a+b}{c}\binom{c}{a} \bigg) T^\fl(c+1) \ot \log^\fm 2 \\
=2\binom{a+b}{a}\bigg(1- \delta_{c>a} \binom{b}{c-a} \bigg) T^\fl(c+1) \ot \log^\fm 2 \ne 0
\end{align*}
since $b\ne c-a$ as they have different parities. Thus $S^\fm(k,l,m)$ is ramified in this case, too.

\medskip
\noindent
\textbf{Case II}. (This includes $f^\fm(E,O,O_{>1})$ and $f^\fm(O_{>1},E,E)$.)
Suppose $a\not\equiv b\equiv c \pmod{2}$. Then we have the following possible cuts for $D_{b+c+1}T^\fm(k,l,m)$.

\begin{center}
\begin{tikzpicture}[scale=0.9]
\node (A0) at (0.05,0) {$0;\eta_1,0_{a-c-1},0,0_c,\eta_2,0_{b-a+c},0,0_{c-a-1},\eta_3,0_{c-a-1},0,0_a;1$};
\node (A1) at (1.3,-0.4) {${}$};
\node (A2) at (-0.5,0.4) {${}$};
\node (A3) at (-0.6,-0.75) {${}$};
\node (A4) at (-2.4,-0.44) {${}$};
\node (A5) at (-1.7,0.65) {${}$};
\node (A6) at (-3.5,-0.6) {${}$};
\draw (-1.72,-.18) to (-1.72,-0.38) to (A1) node {$\cic{1}$} to (5.4,-0.38) to (5.4,-0.18);
\draw (-4.8,0.2) to (-4.8,0.4) to (A2) node {$\cic{2}$} to (4.3,0.4) to (4.3,0.2);
\draw (-2.75,-0.2) to (-2.75,-0.75) to (A3) node {$\cic{3}$} to (2.2,-0.75) to (2.2,-0.2);
\draw (-4.8,-0.2) to (-4.8,-0.44) to (A4) node {$\cic{4}$} to (0.3,-0.44) to (0.3,-0.2);
\draw (-5.3,0.2) to (-5.3,0.65) to (A5) node {$\cic{5}$} to (2.2,0.65) to (2.2,0.2);
\draw (-5.3,-0.2) to (-5.3,-0.6) to (A6) node {$\cic{6}$} to (-1.75,-0.6) to (-1.75,-0.2);
\node (A) at (0,-1.6) {Possible cuts of $D_{b+c+1} f^\fm(k,l,m)$ ($f=T$ or $S$), $a\not\equiv b\equiv c\pmod{2}$.};
\end{tikzpicture}
\end{center}

Thus
\begin{align*}
&\, D_{b+c+1}T^\fm(k,l,m)\\
=&\, \sum_{\eta_1,\eta_2,\eta_3=\pm1}\eta_1\eta_2\eta_3 \bigg[I^\fl(\eta_2;0_b,\eta_3,0_c;1) \ot I^\fm(0;\eta_1,0_a,\eta_2;1) \\
&\,+ \gd_{c\ge a+1} \, I^\fl(\eta_1;0_a,\eta_2,0_b,\eta_3,0_{c-a-1};0) \ot I^\fm(0;\eta_1,0_{a+1};1) \\
&\,+ \gd_{c<a} \, I^\fl(0;0_c,\eta_2,0_b;\eta_3) \ot I^\fm(0;\eta_1,0_{a-c},\eta_3,0_c;1) \\
&\,+ \gd_{c\le a\le b+c} \, I^\fl(\eta_1;0_a,\eta_2,0_{b-a+c};0) \ot I^\fm(0;\eta_1,0_{a-c},\eta_3,0_c;1) \\
&\,+ \gd_{a+1=c}\, I^\fl(0;\eta_1,0_a,\eta_2,0_b;\eta_3) \ot I^\fm(0;\eta_3,0_c;1) \\
&\,+ \gd_{a=b+c,2|a} \, I^\fl(0;\eta_1,0_a;\eta_2) \ot I^\fm(0;\eta_2,0_b,\eta_3,0_c;1)\bigg] \\
=&\, \sum_{\eta_1,\eta_2,\eta_3=\pm1} \bigg[
\Big( \eta_1\eta_2\eta_3 I^\fl(0;0_b,\eta_3,0_c;1)+\eta_1\eta_3I^\fl(1;0_b,\eta_3,0_c;0) \Big) \ot I^\fm(0;\eta_1,0_a,\eta_2;1) \\
&\,- \gd_{c\ge a+1} \, \eta_1\eta_2\eta_3 I^\fl(0;0_{c-a-1},\eta_3,0_b,\eta_2,0_a;1) \ot I^\fm(0;\eta_1,0_{a+1};1) \\
&\,+ \gd_{c<a} \, \eta_1 \eta_2 I^\fl(0;0_c,\eta_2,0_b;1) \ot I^\fm(0;\eta_1,0_{a-c},\eta_3,0_c;1) \\
&\,+ \gd_{c< a\le b+c} \, \eta_2\eta_3 I^\fl(1;0_a,\eta_2,0_{b-a+c};0) \ot I^\fm(0;\eta_1,0_{a-c},\eta_3,0_c;1) \\
&\,+ \gd_{a+1=c} \, \eta_1\eta_2\eta_3 I^\fl(0;\eta_1\eta_3,0_a,\eta_2\eta_3,0_b;\eta_3) \ot I^\fm(0;\eta_3,0_c;1) \\
&\,+ \gd_{a=b+c,2|a} \, \eta_1 \eta_3 I^\fl(0;\eta_1,0_a;1) \ot I^\fm(0;\eta_2,0_b,\eta_3,0_c;1) \bigg]\\
=&\, T^\fl_b(c+1)\ot T^\fm(a+1,1)- T^\fl_c(b+1)\ot t^\fm(a+1,1) \\
&\,-\gd_{c\ge a+1}\, T^\fl_{c-a-1}(b+1,a+1)\ot T^\fm(a+2) \\
&\,+ \gd_{c<a}\, T^\fl_c(b+1) \ot t^\fm(a-c+1,c+1)\\
&\,- \gd_{c<a\le b+c}\, T^\fl_{b-a+c}(a+1) \ot S^\fm(a-c+1,c+1)\\
&\,+\gd_{a+1=c}\, T^\fl(a+1,b+1)\ot T^\fm(a+2) \\
&\,+\gd_{a=b+c,2|a} \, T^\fl(a+1)\ot S^\fm(b+1,c+1) \\
\equiv &\, (-1)^b T^\fl(b+c+1) \ot \bigg[ \binom{b+c}{b} \Big(T^\fm(a+1,1)- t^\fm(a+1,1) \Big) \\
&\, \qquad \qquad \qquad + \bigg((-1)^b \gd_{a=b+c,2|a}-\gd_{c<a\le b+c} \, \binom{b+c}{a} \bigg) S^\fm(a-c+1,c+1) \bigg]
\end{align*}
modulo the unramified terms $\cdots \ot T^\fm(a+2)$ by Lemma~\ref{lem:singleTs} and Lemma~\ref{prop:DTwtOdd},
unramified term $T^\fl_c(b+1) \ot t^\fm(a-c+1,c+1)$ by Lemma~\ref{lem:MtVall>1}.
Note that if $c<a$ then $a-c+1,c+1\ge 2$ and therefore $D_1 S^\fm(a-c+1,c+1) =0$ by Lemma~\ref{lem:D1}. But
\begin{equation*}
D_1 \Big(T^\fm(a+1,1)- t^\fm(a+1,1) \Big)= 2 \log^\fl 2\ot T^\fm(a+1)\ne0.
\end{equation*}
This implies that $T^\fm(k,l,m)$ is ramified in this case.

Similarly, for $S^\fm(k,l,m)$ by assuming $b$ and $c$ are both odd we can compute to get
\begin{align*}
&\, D_{b+c+1}S^\fm(k,l,m)\\
=&\, T^\fl_b(c+1)\ot S^\fm(a+1,1)- T^\fl_c(b+1)\ot \tz^\fm(a+1,1) \\
&\,-\gd_{c\ge a+1} \, T^\fl_{c-a-1}(b+1,a+1)\ot \tz^\fm(a+2) \\
&\,+ \gd_{c<a} \, T^\fl_c(b+1) \ot \tz^\fm(a-c+1,c+1)\\
&\,- \gd_{c< a\le b+c}\, T^\fl_{b-a+c}(a+1) \ot T^\fm(a-c+1,c+1)\\
&\,+\gd_{a=c+1} \, S^\fl(a+1,b+1)\ot \tz^\fm(c+1) \\
&\,+\gd_{a=b+c,2|a} \, T^\fl(a+1)\ot T^\fm(b+1,c+1) \\
\equiv &\, (-1)^b T^\fl(b+c+1)\ot \bigg[ \binom{b+c}{b} \Big(S^\fm(a+1,1) -\tz^\fm(a+1,1)\Big) \\
&\, \qquad \qquad- \gd_{c<a\le b+c}\, \binom{b+c}{a} T^\fm(a-c+1,c+1) +(-1)^b \gd_{a=b+c,2|a}\, T^\fm(b+1,c+1) \bigg]
\end{align*}
modulo all the unramified terms. We can see that $S^\fm(k,l,m)$ is ramified since
\begin{equation*}
D_1\Big(S^\fm(a+1,1) -\tz^\fm(a+1,1)\Big) = 2\log^\fl 2\ot S^\fm(a+1)\ne 0
\end{equation*}
using \eqref{equ:D1Sk1} and Lemma~\ref{lem-D1tzk1}.
Note that in this case, $a-c+1,c+1,b+1\ge 2$.

\medskip
\noindent
\textbf{Case III}.  (This includes $f^\fm(E,O_{>1},E)$.)
Suppose $b\not\equiv a\equiv c \equiv 1\pmod{2}$ and $b\ge 2$.

(i) First we consider $D_{a+2}$ when $a\le c$.

\begin{center}
\begin{tikzpicture}[scale=0.9]
\node (A0) at (0.05,0) {$0;\eta_1,0_{b-2},0,0_{a-b+1},\eta_2,0,0,0_{b-2},\eta_3,0_{a-b+1},0,0_{c-a+b-2};1$};
\node (A1) at (-2.6,-0.4) {${}$};
\node (A2) at (-.9,0.4) {${}$};
\node (A3) at (1,-0.6) {${}$};
\node (A4) at (2.7,0.6) {${}$};
\draw (-4.8,-0.25) to (-4.8,-0.4) to (A1) node {$\cic{1}$} to (-0.3,-0.4) to (-0.3,-0.25);
\draw (-3.3,0.25) to (-3.3,0.4) to (A2) node {$\cic{2}$} to (1.2,0.4) to (1.2,0.25);
\draw (-1.3,-0.25) to (-1.3,-0.6) to (A3) node {$\cic{3}$} to (3.3,-0.6) to (3.3,-0.25);
\draw (0.2,0.25) to (0.2,0.6) to (A4) node {$\cic{4}$} to (5.45,0.6) to (5.45,0.25);
\node (A) at (0,-1.2) {Possible cuts of $D_{a+2} f^\fm(k,l,m)$ ($f=T$ or $S$), $a\le c$, $b\not\equiv a\equiv c \equiv 1\pmod{2}$.};
\end{tikzpicture}
\end{center}

Thus
\begin{align*}
&\, D_{a+2}T^\fm(k,l,m)\\
=&\, \sum_{\eta_1,\eta_2,\eta_3=\pm1}\eta_1\eta_2\eta_3 I^\fl(\eta_1;0_a,\eta_2,0;0) \ot I^\fm(0;\eta_1,0_{b-1},\eta_3,0_c;1) \\
&\,+ \gd_{b\le a+1} \sum_{\eta_1,\eta_2,\eta_3=\pm1}\eta_1\eta_2\eta_3 I^\fl(0;0_{a-b+1},\eta_2,0_b;\eta_3) \ot I^\fm(0;\eta_1,0_{b-1},\eta_3,0_c;1) \\
&\,+ \gd_{b\le a+1\le b+c} \sum_{\eta_1,\eta_2,\eta_3=\pm1}\eta_1\eta_2\eta_3 I^\fl(\eta_2;0_b,\eta_3,0_{a-b+1};0) \ot I^\fm(0;\eta_1,0_a,\eta_2,0_{c-a+b-1};1) \\
&\,+ \gd_{c\le a+1\le b+c} \sum_{\eta_1,\eta_2,\eta_3=\pm1}\eta_1\eta_2\eta_3 I^\fl(0;0_{a-c+1},\eta_3,0_c;1) \ot I^\fm(0;\eta_1,0_a,\eta_2,0_{b-a+c-1};1) \\
=&\, -T^\fl_1(a+1) \ot S^\fm(b,c+1)
 + \gd_{b\le a+1} \, T^\fl_{a-b+1}(b+1) \ot t^\fm(b,c+1) \\
&\,- \gd_{b\le a+1\le b+c} \, T^\fl_{a-b+1}(b+1) \ot t^\fm(a+1,c-a+b)
 + \gd_{c=a}\, T^\fl_{1}(c+1) \ot T^\fm(a+1,b) \\
=&\, T^\fl(a+2) \ot \bigg[(a+1) S^\fm(b,c+1)
 + \gd_{b\le a+1}\, \binom{a+1}{b} t^\fm(b,c+1) \\
&\, \qquad - \gd_{b\le a+1\le b+c}\, \binom{a+1}{b} t^\fm(a+1,c-a+b)
-\gd_{c=a}\, (a+1) T^\fm(c+1,b)\bigg]
\end{align*}
by Lemma~\ref{lem:singleTs} since $a\le c$. But
\begin{align*}
&\, D_{b+c} S^\fm(b,c+1) =\sum_{\eta_2=\pm1} \eta_2 I^\fl(\bar1;0_{b-1},\eta_2,0_c;1)\ot I^\fm(0;\bar1;1)\\
=&\,\Big(T^\fl_c(b)+T^\fl_{b-1}(c+1)\Big)\ot I^\fm(0;\bar1;1)=-2\binom{b+c-1}{c} T^\fl(b+c)\ot \log^\fm 2\ne 0
\end{align*}
while $D_{b+c} t^\fm(b,c+1)=D_{b+c} t^\fm(a+1,c-a+b)=0$ by Lemma~\ref{lem:Dw-1t} since $c+1\ge2$ and $c-a+b\ge 2$. If $a=c$ we need to compute one extra contribution:
\begin{align*}
&\, D_{b+c} T^\fm(c+1,b) = -\sum_{\eta_2=\pm1} \eta_2 I^\fl(\bar1;0_c,\eta_2,0_{b-1};1)\ot I^\fm(0;\bar1;1)\\
=&\,-\Big(T^\fl_c(b)+T^\fl_{b-1}(c+1)\Big)\ot I^\fm(0;\bar1;1)= 2\binom{b+c-1}{c} T^\fl(b+c)\ot \log^\fm 2
\end{align*}
so that
\begin{align*}
D_{b+c} \Big(S^\fm(b,c+1)-T^\fm(c+1,b)\Big)=-4\binom{b+c-1}{c} T^\fl(b+c)\ot \log^\fm 2\ne 0.
\end{align*}
Thus $D_{a+2}T^\fm(k,l,m)$ is ramified in this case.

Similarly,
\begin{align*}
&\, D_{a+2}S^\fm(k,l,m)\\
=&\, -T^\fl_1(a+1) \ot T^\fm(b,c+1)
 + \gd_{b\le a+1}\, T^\fl_{a-b+1}(b+1) \ot \tz^\fm(b,c+1) \\
&\,- \gd_{b\le a+1\le b+c}\, T^\fl_{a-b+1}(b+1) \ot \tz^\fm(a+1,c-a+b)
 + \gd_{c=a} \, T^\fl_{1}(c+1) \ot S^\fm(a+1,b) \\
\equiv &\, T^\fl(a+2) \ot \bigg[(a+1)\Big( T^\fm(b,c+1) - \gd_{c=a}\, S^\fm(a+1,b)\Big)-\gd_{a+1=b+c}\, \binom{a+1}{b}\tz^\fm(a+1,1)\bigg]
\end{align*}
modulo unramified terms. This is clearly ramified if $c\ne a$ and $a+1\ne b+c$ by Prop.~\ref{prop:DTwtEven}.
If $c=a$ then $a+1\ne b+c$ since $b\ge 2$, and therefore by \eqref{equ:Dw-1DblTwtEven} and \eqref{equ:Dw-1DblSwtEven}
\begin{align*}
 D_{a+b}\Big( T^\fm(b,a+1)-S^\fm(a+1,b)\Big)= 4\binom{a+b-1}{a}T^\fl(a+b) \ot \log^\fm 2
\end{align*}
which is ramified. If $a+1= b+c$ then $a\ne c$ and therefore
\begin{align*}
 D_1\Big((a+1) T^\fm(b,a+1)-\binom{a+1}{b}\tz^\fm(a+1,1)\Big)=\binom{a+1}{b}S^\fm(a+1) \ot \log^\fm 2 \ne 0.
\end{align*}
Consequently, $D_{a+2}S^\fm(k,l,m)$ is always ramified in this case.

\medskip
(ii) Next we consider $D_{c+2}$ when $c\le a-2$.

\begin{center}
\begin{tikzpicture}[scale=0.9]
\node (A0) at (0.05,0) {$0;\eta_1,0_{a+b-c-2},0,0_{c-b+1},\eta_2,0_{c-a+1},0,0_{a+b-c-2},\eta_3,0_{c-b+1},0,0_{b-2};1$};
\node (A1) at (-2.6,-0.4) {${}$};
\node (A2) at (-.3,0.4) {${}$};
\node (A3) at (1.7,-0.6) {${}$};
\node (A4) at (4.0,0.6) {${}$};
%\draw (-5.75,-0.25) to (-5.75,-0.4) to (A1) node {$\cic{4}$} to (0.6,-0.4) to (0.6,-0.25);
\draw (-3.35,0.25) to (-3.35,0.4) to (A2) node {$\cic{3}$} to (2.85,0.4) to (2.85,0.25);
\draw (-1.4,-0.25) to (-1.4,-0.6) to (A3) node {$\cic{2}$} to (4.9,-0.6) to (4.9,-0.25);
\draw (1.8,0.25) to (1.8,0.6) to (A4) node {$\cic{1}$} to (6.35,0.6) to (6.35,0.25);
\node (A) at (0,-1.2) {Possible cuts of $D_{c+2}T^\fm(k,l,m)$, $c\le a-2$, $b\not\equiv a\equiv c \equiv 1\pmod{2}$.};
\end{tikzpicture}
\end{center}

We have
\begin{align*}
&\, D_{c+2}T^\fm(k,l,m)\\
=&\, \sum_{\eta_1,\eta_2,\eta_3=\pm1}\eta_1\eta_2\eta_3 I^\fl(0;0,\eta_3,0_c;1) \ot I^\fm(0;\eta_1,0_a,\eta_2,0_{b-1};1) \\
&\,+ \gd_{c\ge b-1} \sum_{\eta_1,\eta_2,\eta_3=\pm1}\eta_1\eta_2\eta_3 I^\fl(\eta_2;0_b,\eta_3,0_{c-b+1};0) \ot I^\fm(0;\eta_1,0_a,\eta_2,0_{b-1};1) \\
&\,+ \gd_{a\ge c-b+1\ge 0} \sum_{\eta_1,\eta_2,\eta_3=\pm1}\eta_1\eta_2\eta_3 I^\fl(0;0_{c-b+1},\eta_2,0_b;\eta_3) \ot I^\fm(0;\eta_1,0_{a+b-c-1},\eta_3,0_c;1) \\
=&\, T^\fl_1(c+1) \ot T^\fm(a+1,b) - \gd_{c\ge b-1} \, T^\fl_{c-b+1}(b+1) \ot t^\fm(a+1,b)\\
&\,+ \gd_{a\ge c-b+1\ge 0}\, T^\fl_{c-b+1}(b+1)\ot t^\fm(a+b-c,c+1) \\
=&\, T^\fl(c+2) \ot \bigg[-(c+1) T^\fm(a+1,b)
- \gd_{c\ge b-1}\, \binom{c+1}{b} t^\fm(a+1,b) \\
&\, \qquad \qquad + \gd_{a\ge c-b+1\ge 0} \, \binom{c+1}{b} t^\fm(a+b-c,c+1) \bigg]
\end{align*}
by Lemma~\ref{lem:singleTs} since $c\le a-2$. But
\begin{align*}
D_{a+b} T^\fm(a+1,b) = &\,-\sum_{\eta_2=\pm1} \eta_2 I^\fl(\bar1;0_a,\eta_2,0_{b-1};1)\ot I^\fm(0;\bar1;1)\\
=&\,-\Big(T^\fl_a(b)+T^\fl_{b-1}(a+1)\Big)\ot I^\fm(0;\bar1;1)\\
=&\, 2\binom{a+b-1}{a} T^\fl(a+b)\ot \log^\fm 2\ne 0
\end{align*}
while $D_{a+b} t^\fm(a+1,b)=D_{a+b} t^\fm(a+b-c,c+1)=0$. Thus $D_{c+2}T^\fm(k,l,m)$ is ramified in this case.

Similarly,
\begin{align*}
 D_{c+2}S^\fm(k,l,m)
=&\, T^\fl_1(c+1) \ot S^\fm(a+1,b) - \gd_{c\ge b-1}\, T^\fl_{c-b+1}(b+1) \ot \tz^\fm(a+1,b)\\
&\,+ \gd_{a\ge c-b+1\ge 0}\, T^\fl_{c-b+1}(b+1)\ot \tz^\fm(a+b-c,c+1)
\end{align*}
whose first term is the only ramified term so the expression as a whole is ramified.

Combining (i) and (ii) we see that both $T^\fm(k,l,m)$ and $S^\fm(k,l,m)$ are ramified by Thm.~\ref{thm-Glanois}.

\bigskip
We have now exhausted all the possibilities for the triple $T$-values and triple $S$-values, which complete the proof
of Thm.~\ref{thm:MTV} and Thm.~\ref{thm:MSV}.

\section{Ramified and unramified MtVs}\label{sec:MtVs}
In this section, we shall prove Thm.~\ref{thm:unramfiedMtV} and Thm.~\ref{thm:ramfiedMtV} using Glanois's descent theory. We start with the following result due to Murakami.

\begin{lem}\label{lem:MtVall>1} \emph{(cf. \cite[Thm.~8]{Murakami2021})}
If positive integers $k_1\ge 2,\dots,k_d\ge 2$ then the motivic multiple $t$-value $t^\fm(k_1,\dots,k_d)$ is unramified.
\end{lem}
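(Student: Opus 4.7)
The plan is to invoke Theorem~\ref{thm-Glanois}: it suffices to verify that $D_1 t^\fm(\bfk) = 0$ and that $D_r t^\fm(\bfk) \in \calL_r \otimes \calH_{|\bfk|-r}$ for every odd $r$ with $1 < r < |\bfk|$. I would proceed by induction on the weight $w = |\bfk|$, the base case being supplied by Lemma~\ref{lem:singleTs}.

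To establish $D_1 t^\fm(\bfk) = 0$, note that since every $k_i \geq 2$, the word $(0; \eta_1, 0_{k_1-1}, \ldots, \eta_d, 0_{k_d-1}; 1)$ has at least one zero between any two consecutive $\eta_i$'s. A weight-one cut picks a consecutive triple $(a, b, c)$; each such triple either has equal endpoints (vanishing by the trivial-path axiom (I2)) or produces an $I^\fl$-integral of the form $(0; 0; \pm 1)$ or $(\pm 1; 0; 0)$ which vanishes by regularization (I4), possibly after applying (I5). Summing over the $\eta_i$'s therefore gives zero.

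For odd $r \geq 3$, I would enumerate the cuts by position, in the style of the bracket diagrams used throughout \S\ref{sec:DBMixedVs}--\S\ref{sec:TripleTandS}. Each cut produces a tensor in which the left factor is a length-$(r+2)$ motivic $I^\fl$-integral and the right factor is the integral on the complementary quotient sequence. After summing over the $\eta_i$'s lying inside the subsequence, the left factor reduces either to a single motivic $T$- or $t$-value (unramified by Lemma~\ref{lem:singleTs}) when the cut touches only one $\eta_i$, or to a depth-two motivic Euler sum of odd weight (unramified by Cor.~\ref{cor:DBreduce}) when it straddles two $\eta_i$'s. In the benign cases the surviving right factor is again a motivic multiple $t$-value whose components remain $\geq 2$, hence unramified by the inductive hypothesis.

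The main obstacle is a cut which removes an entire block of zeros and thereby forces two $\eta_i$'s to become adjacent in the quotient sequence. Such a collapse would, in isolation, yield a motivic $t$-value containing a unit component, and Prop.~\ref{prop:MtV1s} shows these are in general ramified. The key technical step, furnished by Lemma~\ref{Murakami-lemma9}, is that for $\alpha, \beta \in \{\pm 1\}$ and $a_0, \ldots, a_d \geq 1$ the sum $\sum_{\eta_1, \ldots, \eta_d} I^\fl(\alpha; 0_{a_0-1}, \eta_1, 0_{a_1-1}, \ldots, \eta_d, 0_{a_d-1}; \beta)$ vanishes. Applying this identity to the $\eta_i$'s that would have been absorbed into the left factor kills precisely the dangerous contributions, leaving only tensors whose right factor is a genuine $t^\fm$-value with every component $\geq 2$, to which induction applies. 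An invocation of Theorem~\ref{thm-Glanois} then delivers $t^\fm(\bfk) \in \calH$.
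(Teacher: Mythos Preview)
The paper does not prove this lemma; it simply cites Murakami \cite[Thm.~8]{Murakami2021}. Your overall strategy---Glanois's criterion (Thm.~\ref{thm-Glanois}), induction on weight, and elimination of the ``dangerous'' cuts via Lemma~\ref{Murakami-lemma9}---is indeed Murakami's, and the handling of $D_1$ and of the unit-component obstruction is correct.

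There is, however, a genuine gap in your treatment of the odd $r\ge 3$. You assert that the left factor of every surviving cut is either a \emph{single} $T$-/$t$-value or a \emph{depth-two} Euler sum of odd weight. That is false: a cut may straddle any number of $\eta_i$'s, so the left factor can have arbitrarily large depth. Two kinds of surviving cuts occur. First, the deconcatenation cut starting at the initial $0$ and ending at some $\eta_m$ gives $t^\fl(\bfk_{1,m-1})\otimes t^\fm(\bfk_{m,d})$; here the left factor has depth $m-1$, possibly large, and its unramifiedness must come from the inductive hypothesis on weight, not from Lemma~\ref{lem:singleTs} or Cor.~\ref{cor:DBreduce}. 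Second, a cut with one endpoint $0$ and the other $\eta_m$, straddling $\eta_{i+1},\dots,\eta_{m-1}$, yields after summing over those internal $\eta$'s a left factor of the form $\tz^\fl_c(\bfk_{i+1,m-1})$ (or its reversal). This is not a depth-two Euler sum in general; the reason it is unramified is the distribution relation, Thm.~\ref{thm-distribution}, which identifies it with a rational multiple of a genuine $\zeta^\fl_c$-value. You never invoke this, and Cor.~\ref{cor:DBreduce} does not cover it. Once you replace the depth-$\le 2$ claim by ``the left factor is either $t^\fl$ of smaller weight (handled by induction) or $\tz^\fl_c$ of an admissible composition (handled by Thm.~\ref{thm-distribution})'', and observe that $|\bfk_{i,j}|-r\ge 2$ ensures the right factor keeps all components $\ge 2$, the argument closes; this is precisely the content of Charlton's formula Thm.~\ref{thm:Charlton}.
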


In \cite{Charlton2021}, Charlton improved some of Murakami's results and proved the following.
For $\bfk = (k_1,\ldots,k_d) \in \N^d$, we put $\bfk_{i,j} = (k_i, \ldots, k_j)$ and
set $\bfk_{j,i}$ to be the reversal of $\bfk_{i,j}$ for all $1\le i\le j\le d$.

\begin{lem}\label{lem:Charlton} \emph{(cf. \cite[Prop.~5.8]{Charlton2021})}
Let $\bfk=(k_1,\dots,k_d)$ be a $d$-tuple of positive integers. Then
\begin{equation}\label{equ:D1MtVs}
 D_1 t^\fm(\bfk)=\log^\fl 2\ot \Big(2\delta_{k_1=1} \, t^\fm(\bfk_{2,d})-\delta_{k_d=1} \,  t^\fm(\bfk_{1,d-1})\Big),
\end{equation}
where $\gd_\bullet=1$ if $\bullet$ is true and $0$ otherwise.
\end{lem}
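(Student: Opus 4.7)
My plan is to expand $D_1 t^\fm(\bfk)$ by applying the coaction formula cut-by-cut to each iterated integral in the defining sum, and then to identify which cuts survive. A weight-one cut at position $p$ produces a factor $I^\fl(a_p;a_{p+1};a_{p+2})$, and using axioms (I2), (I5), and (I6) I first establish a small table: for entries in $\{0,\pm1\}$ this factor equals $-\log^\fl 2$ exactly when $(a_p,a_{p+1},a_{p+2})$ has the form $(0,\pm 1,\mp 1)$, equals $+\log^\fl 2$ when it has the form $(\pm 1,\mp 1,0)$, and vanishes otherwise---in particular whenever the middle letter is $0$, or any two of the three letters coincide. The immediate consequence is that cuts which delete a zero letter of the word all drop out.

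Thus only cuts deleting some $\eta_i$ survive, and I classify them as type~$\alpha$ (with $a_{p_i-1}=0$ and $a_{p_i+1}=-\eta_i$, which forces $k_i=1$, and is automatic on the left when $i=1$ but otherwise requires $k_{i-1}\ge 2$) or type~$\beta$ (with $a_{p_i-1}=-\eta_i$ and $a_{p_i+1}=0$, which forces $k_{i-1}=1$, $i\ge 2$, and $k_i\ge 2$). Summing the remaining $\eta_j$'s together with the overall sign $\eta_1$ reassembles the quotient as a motivic $t$-value $t^\fm(\bfk')$, where $\bfk'$ is $\bfk$ with one entry equal to $1$ omitted. The central computation is then the following cancellation: whenever some $k_i=1$ lies strictly inside $\bfk$ with both neighbors $k_{i-1},k_{i+1}\ge 2$, the type-$\alpha$ cut at $i$ and the type-$\beta$ cut at $i+1$ produce equal and opposite multiples of $\log^\fl 2\ot t^\fm(\bfk')$. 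A slightly more delicate version of this pairing---the $\alpha$-cut at the left end of a maximal block of consecutive $1$'s with the $\beta$-cut just past its right end---extends the cancellation to kill all interior contributions.

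The only uncancelled terms are the boundary ones. When $k_1=1$, the type-$\alpha$ cut at $i=1$ (where the sign $\eta_1$ is itself absorbed into the weight-one factor, flipping the sign) together with the matching type-$\beta$ cut just past the leading block of $1$'s each contribute $+\log^\fl 2\ot t^\fm(\bfk_{2,d})$, producing the coefficient $2$; when $k_d=1$, the type-$\alpha$ cut at $i=d$ (or equivalently the leftmost $\alpha$-cut inside a trailing block of $1$'s) contributes $-\log^\fl 2\ot t^\fm(\bfk_{1,d-1})$. The degenerate case $d=1$ is checked by hand. The principal obstacle I expect is the careful bookkeeping of the $\eta_1$-absorption at the left boundary and the multiplicity counts when $\bfk$ begins or ends with a block of consecutive $1$'s, since there the natural cancellation partner may lie outside $\bfk$; once those edge cases are tracked, the claimed identity drops out.
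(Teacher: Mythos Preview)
The paper does not give its own proof of this lemma; it cites Charlton and records the general $D_r$ formula (Theorem~\ref{thm:Charlton}), of which the lemma is the $r=1$ specialization. Your direct cut-by-cut approach is a legitimate alternative and is essentially how the paper handles the closely analogous computation of $D_1\tz^\fm_a(\bfk,1)$ in Lemma~\ref{lem-D1tzk1}.

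There is, however, one genuine error. Your table asserts that $I^\fl(a;b;c)$ vanishes whenever ``any two of the three letters coincide''; this is false. For instance $I^\fl(1;1;\bar1)=-\log^\fl 2$ and $I^\fl(\bar1;1;1)=\log^\fl 2$, as one checks from axioms (I5), (I6), and (I8). Consequently the cut at $\eta_i$ whose two neighbours are both in $\{\pm1\}$ (the case $k_{i-1}=k_i=1$), and the endpoint cut at $\eta_d$ with neighbours $(\eta_{d-1},1)$ (the case $k_{d-1}=k_d=1$), do \emph{not} vanish term by term. What is true is that they vanish after summing over the deleted variable $\eta_i$, and this is precisely Lemma~\ref{Murakami-lemma9}; the paper uses exactly this device for the interior cuts in the proof of Lemma~\ref{lem-D1tzk1}. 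Once you replace the pointwise vanishing claim by this summation argument, your type-$\alpha$/type-$\beta$ classification and the block-by-block cancellation go through as you describe, including the coefficient $2$ at the left boundary coming from the absorbed factor $\eta_1$.
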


\begin{prop}\label{prop:MtV1s}
For integers $a,b$ with $a+b\ge 1$ the motivic $t$-values $t^\fm(1_a,\bfk,1_b)$ are ramified for all $\bfk$.
\end{prop}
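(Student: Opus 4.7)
The plan is to apply Lemma~\ref{lem:Charlton} and show that $D_1 t^\fm(\bfl)\neq 0$ for $\bfl=(1_a,\bfk,1_b)$ with $a+b\ge 1$. Since $\log^\fl 2\ne 0$ in $\calL^2_1$, and Lemma~\ref{lem:Charlton} writes $D_1 t^\fm(\bfl)=\log^\fl 2\otimes X$ for an explicit $\Z$-linear combination $X$ of shorter motivic $t$-values, this reduces to checking that $X\neq 0$ in $\calH^2_{w-1}$. By Thm.~\ref{thm-Glanois}, non-vanishing of $D_1$ forces $t^\fm(\bfl)\notin\calH_w$, i.e.\ ramification.

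First I would reduce to a normal form by absorbing any leading or trailing $1$'s of $\bfk$ into $1_a$ or $1_b$, so that $\bfk$ is either empty or has $k_1,k_d\ge 2$. Then Lemma~\ref{lem:Charlton} gives four model cases. The two easy cases are (i) $a\ge 1$, $b=0$, $\bfk\ne\emptyset$, yielding $X=2\,t^\fm(1_{a-1},\bfk)$; and (ii) $a=0$, $b=1$, $\bfk\ne\emptyset$, yielding $X=-t^\fm(\bfk)$. In both, the resulting composition is admissible (ends with $k_d\ge 2$), so its period is a convergent sum of strictly positive terms, hence $X\neq 0$ in $\calH^2$.

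The remaining cases split off into (iii) $\bfk=\emptyset$, giving $\bfl=(1_n)$ with $n=a+b$ and $X=t^\fm(1_{n-1})$; (iv) $a=0$, $b\ge 2$; and (v) $a,b\ge 1$ with non-empty $\bfk$, which yields the two-term combination
\[
X=2\,t^\fm(1_{a-1},\bfk,1_b)-t^\fm(1_a,\bfk,1_{b-1}).
\]
For (iii) I would argue by induction on $n$: with $t^\fm(1)=\log^\fm 2$ as the base case, one shows $t^\fm(1_n)\equiv \frac{(\log^\fm 2)^n}{n!}\pmod{\zeta^\fm(2)\calH^2}$ using the shuffle (or equivalently stuffle) relations among motivic iterated integrals; since $\log^\fm 2$ is a genuine new weight-$1$ generator of $\calA^2$, this class is non-zero in the quotient, proving $t^\fm(1_n)\neq 0$ in $\calH^2_n$. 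Case (iv) is handled by the same leading-term analysis combined with the inductive hypothesis on the weight $w$. For case (v) one compares the leading $(\log^\fm 2)^{a+b-1}$-parts of the two summands modulo $\zeta^\fm(2)\calH^2$; these are non-zero rational multiples of $t^\fm(\bfk)_{\text{reg}}$ with distinct coefficients $\frac{2}{(a-1)!\,b!}$ and $\frac{1}{a!\,(b-1)!}$, so their difference is non-zero in $\calA^2/\zeta^\fm(2)\calA^2$.

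The main obstacle is precisely case (v): a priori the two terms in $X$ could coincide and produce a false cancellation. Resolving this requires the leading-term computation above (equivalently, one may iterate $D_1$ on $X$ and invoke induction on the weight, ultimately reducing to the non-vanishing $t^\fm(1_m)\neq 0$ established in case (iii)). Once these non-vanishing statements are in place, the proof consists of assembling the case analysis and applying Thm.~\ref{thm-Glanois}.
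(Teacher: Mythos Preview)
Your overall strategy---compute $D_1$ via Lemma~\ref{lem:Charlton} and iterate until only $t^\fm(\bfk)$ survives---is exactly what the paper does, and your reduction to the normal form with $k_1,k_d\ge 2$ is the same. Cases (i)--(iv) are fine.

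The gap is in case~(v). Your claimed ``leading $(\log^\fm 2)^{a+b-1}$-coefficients'' $\tfrac{2}{(a-1)!\,b!}$ and $\tfrac{1}{a!\,(b-1)!}$ are not correct, and in any event they are \emph{not} always distinct: they coincide precisely when $b=2a$ (e.g.\ $a=1,b=2$). So the sentence ``distinct coefficients, so their difference is non-zero'' fails as written. The underlying issue is that the leading part of $t^\fm(1_a,\bfk,1_b)$ in powers of $\log^\fm 2$ is \emph{not} $\tfrac{1}{a!\,b!}$; the two ends interact asymmetrically because the $D_1$-recursion from Lemma~\ref{lem:Charlton} has coefficients $2$ and $-1$, not $1$ and $1$.

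Your parenthetical fix---iterate $D_1$ on $X$---is exactly the right repair and is what the paper does directly. Writing $D_1^{(\ell)} t^\fm(1_a,\bfk,1_b)=(\log^\fl 2)^{\ot\ell}\ot\sum_{i+j=\ell} C^{(\ell)}_{i,j}\, t^\fm(1_{a-i},\bfk,1_{b-j})$, the recursion $C^{(\ell)}_{i,j}=2C^{(\ell-1)}_{i-1,j}-C^{(\ell-1)}_{i,j-1}$ solves to $C^{(\ell)}_{i,j}=2^i(-1)^j\binom{\ell}{i}$. Taking $\ell=a+b$ leaves the single term $2^a(-1)^b\binom{a+b}{a}\,t^\fm(\bfk)$, which is never zero since $t(\bfk)>0$. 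This closes the gap without any case distinction on $(a,b)$ and supersedes your separate treatment of (iii)--(v).
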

\begin{proof}
Without loss of generality we may assume $\bfk$ neither starts or ends with 1.
If $\bfk$ is empty and $b=0$ then from Lemma~\ref{lem:singleTs} we may assume $a\ge 2$.
By \cite[Prop.~5.8]{Charlton2021} we get
\begin{align*}
D_1 t^\fm(1_a)=&\, \log^\fl 2\ot t^\fm(1_{a-1})\ne 0.
\end{align*}
Thus the proposition follows from Thm.~\ref{thm-Glanois}.

Now suppose $\bfk$ is nonempty. By \eqref{equ:D1MtVs} we get
\begin{equation*}
D_1t^\fm(1_a,\bfk,1_b)=\log^\fl 2\ot \Big(2 \delta_{a>0} t^\fm(1_{a-1},\bfk,1_b)-\delta_{b>0} t^\fm(1_a,\bfk,1_{b-1})\Big).
\end{equation*}
Defining $D_1^{(0)}=D_1$ and $D_1^{(\ell)}=\text{id}\ot D_1^{(\ell-1)}$ for all $\ell\ge1$ we get
\begin{equation*}
D_1^{(\ell)} t^\fm(1_a,\bfk,1_b)=(\log^\fl 2)^{\ot (\ell-1)}\ot \sum_{\substack{i+j=\ell\\ i,j\ge 0}}
\gd_{a\ge i,b\ge j}\, C_{i,j}^{(\ell)} t^\fm(1_{a-i},\bfk,1_{b-j}),
\end{equation*}
where the coefficients $C_{i,j}^{(\ell)}$ satisfy the recurrence relation
\begin{equation*}
C_{i,j}^{(\ell)}=2C_{i-1,j}^{(\ell-1)}-C_{i,j-1}^{(\ell-1)}.
\end{equation*}
With substitution $C_{i,j}^{(\ell)}=2^i(-1)^j B_{i,j}^{(\ell)}$ the recurrence becomes
\begin{equation*}
B_{i,j}^{(\ell)}=B_{i-1,j}^{(\ell-1)}+B_{i,j-1}^{(\ell-1)}
\end{equation*}
so that $B_{i,j}^{(\ell)}=\binom{\ell}{i}$ are just the binomial coefficients. Hence,
$C_{i,j}^{(\ell)}=2^i(-1)^j \binom{\ell}{i}$ and therefore
\begin{align*}
D_1^{(a+b)} t^\fm(1_a,\bfk,1_b)=(\log^\fl 2)^{\ot (a+b-1)}\ot 2^a(-1)^b \binom{a+b}{a} t^\fm(\bfk) \ne 0.
\end{align*}
Repeatedly using Thm.~\ref{thm-Glanois} we see that none of the $D_1^{(\ell)}$ ($\ell<a+b$) vanishes so that
$t^\fm(1_a,\bfk,1_b)$ is ramified.
\end{proof}

The following fact will be useful in our computations later.
\begin{lem}\label{lem:Dw-1t}
For any admissible $\bfk$ of weight $w$ we have $D_{w-1} t^\fm(\bfk)=0$.
\end{lem}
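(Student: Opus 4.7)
The plan is to expand $D_{w-1}t^\fm(\bfk)$ directly from the definitions and reduce it to an instance of Murakami's Lemma~\ref{Murakami-lemma9}. Concretely, for each summand $I^\fm(0;\eta_1,0_{k_1-1},\dots,\eta_d,0_{k_d-1};1)$ in the definition \eqref{defn:MMtV} of $t^\fm(\bfk)$, the derivation $D_{w-1}$ produces only two nonzero cut positions ($p=0$ and $p=1$), since $p$ ranges over $0$ to $w-r=1$.

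First I would dispose of the $p=0$ contribution: it yields a right-hand factor of $I^\fm(0;a_w;1)$ where $a_w$ is the last entry of the iterated-integral word. Since $\bfk$ is admissible we have $k_d\ge 2$, so $a_w=0$, and then $I^\fm(0;0;1)=0$ by the regularization axiom (I4). Thus only $p=1$ contributes, giving
\begin{equation*}
D_{w-1} t^\fm(\bfk)
=\sum_{\eta_1,\dots,\eta_d=\pm1}\eta_1\,
I^\fl(\eta_1;0_{k_1-1},\eta_2,0_{k_2-1},\dots,\eta_d,0_{k_d-1};1)\ot I^\fm(0;\eta_1;1).
\end{equation*}

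Next I would observe that $I^\fm(0;\eta_1;1)=\eta_1\log^\fm 2$ (directly from the period map, mirroring the computation in Lemma~\ref{lem:singleTs}). Pulling this scalar through the tensor and using $\eta_1^2=1$, the sum collapses to
\begin{equation*}
D_{w-1} t^\fm(\bfk)
=\Bigg[\sum_{\eta_1=\pm 1}\sum_{\eta_2,\dots,\eta_d=\pm1}
 I^\fl(\eta_1;0_{k_1-1},\eta_2,0_{k_2-1},\dots,\eta_d,0_{k_d-1};1)\Bigg]\ot \log^\fm 2.
\end{equation*}
Finally, for each fixed choice of $\eta_1\in\{\pm 1\}$, the inner sum over $\eta_2,\dots,\eta_d$ is precisely the expression in Lemma~\ref{Murakami-lemma9} with $\ga=\eta_1$, $\gb=1$, and exponents $(a_0,a_1,\dots,a_{d-1})=(k_1,k_2,\dots,k_d)$, all of which are $\ge 1$. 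Hence the inner sum vanishes, and summing over $\eta_1$ gives $D_{w-1}t^\fm(\bfk)=0$.

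There is no substantive obstacle here: the argument is essentially bookkeeping plus a single application of Murakami's vanishing lemma. The only subtle point is confirming that the admissibility condition $k_d\ge 2$ is what kills the $p=0$ term—without it, that term would contribute and the conclusion would fail (indeed, ramified $t$-values with $k_d=1$ do appear, as in Prop.~\ref{prop:MtV1s}).
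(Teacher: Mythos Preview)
Your overall strategy matches the paper's exactly: use admissibility ($k_d\ge 2$) to kill the $p=0$ cut, leave only the $p=1$ cut, and then invoke Murakami's Lemma~\ref{Murakami-lemma9}. However, one intermediate claim is incorrect: you assert $I^\fm(0;\eta_1;1)=\eta_1\log^\fm 2$, but in fact $I^\fm(0;1;1)=0$ by axiom (I4) while $I^\fm(0;\bar1;1)=-\log^\fm 2$ (these are the values actually used in Lemma~\ref{lem:singleTs}). Consequently only the $\eta_1=-1$ term survives, giving
\[
D_{w-1}t^\fm(\bfk)=\sum_{\eta_2,\dots,\eta_d=\pm1}I^\fl(\bar1;0_{k_1-1},\eta_2,0_{k_2-1},\dots,\eta_d,0_{k_d-1};1)\ot\log^\fm 2,
\]
which is precisely the expression the paper writes before applying Lemma~\ref{Murakami-lemma9} once with $\ga=-1$, $\gb=1$. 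Your conclusion is unaffected, since Murakami's lemma would in any event kill the inner sum for each fixed $\eta_1\in\{\pm1\}$; but the formula $I^\fm(0;\eta_1;1)=\eta_1\log^\fm 2$ is false and should be replaced by the correct dichotomy above.
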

\begin{proof} Suppose $\bfk=(n_1+1,\dots,n_d+1)$, $n_d\ge 1$. Then
\begin{align*}
D_{w-1} t^\fm(\bfk)=&\,\sum_{\eta_1,\dots,\eta_d=\pm1} \eta_1 D_{w-1}I^\fm(0;\eta_1,0_{n_1},\dots,,\eta_d,0_{n_d};1)\\
=&\,\sum_{\eta_2,\dots,\eta_d=\pm1} I^\fl(\bar1;0_{n_1},\eta_2,0_{n_2},\dots,\eta_d,0_{n_d};1)\ot \log^\fm 2=0
\end{align*}
by Lemma~\ref{Murakami-lemma9}.
\end{proof}

In fact, Charlton proved a detailed computation of the derivation $D_r$ on motivic MtVs as follows.
Lemma~\ref{lem:Charlton} above is only one of its corollaries.

\begin{thm}\label{thm:Charlton} \emph{(cf. \cite[Prop.~5.6]{Charlton2021})}
Let $\bfk = (k_1,\ldots,k_d) \in \N^d$. Then
\begin{align}
&\label{eqn:dr:deconcat} D_r \big( t^\fm(\bfk)\big) =
\sum_{1 \le j \le d} \delta_{|\bfk_{1,j}|=r} t^\fl(\bfk_{1,j}) \ot t^\fm(\bfk_{j+1,d} ) \\[1ex]
& \label{eqn:dr:0eps} + \!\! \sum_{1 \le i < j \le d}
	\delta_{|\bfk_{i+1,j}| \le r < |\bfk_{i,j}|-1} \binom{\zeta^\fl_{r-|\bfk_{i+1,j}|}(\bfk_{i+1,j})}{-\delta_{r=1} \log^\fl 2} \ot t^\fm(\bfk_{1,i-1},|\bfk_{i,j}| - r, \bfk_{j+1,d}) \\[1ex]
& \label{eqn:dr:eps0} - \!\! \sum_{1 \le i < j \le d}
			\delta_{|\bfk_{i,j-1}| \le r <|\bfk_{i,j}|-1} \binom{\zeta^\fl_{r-|\bfk_{i,j-1}|}(\bfk_{j-1,i})}{-\delta_{r=1} \log^\fl 2}\ot t^\fm(\bfk_{1,i-1},|\bfk_{i,j}| - r, \bfk_{j+1,d}).
\end{align}		
\end{thm}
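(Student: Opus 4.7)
The strategy is to expand $D_r$ directly using its definition on iterated integrals and then, for each possible cut position, sum over the signs $\eta_1,\dots,\eta_d$ and use Murakami's vanishing lemma (Lemma \ref{Murakami-lemma9}) to kill all cuts except the three families displayed. Since
\[
t^\fm(\bfk)=\sum_{\eta_1,\dots,\eta_d=\pm1}\eta_1\,I^\fm\bigl(0;\eta_1,0_{k_1-1},\dots,\eta_d,0_{k_d-1};1\bigr),
\]
applying $D_r$ yields a sum over cuts $(a_p;a_{p+1},\dots,a_{p+r};a_{p+r+1})$ of the word $W=(0;\eta_1,0_{k_1-1},\dots,\eta_d,0_{k_d-1};1)$. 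I would classify these cuts by the types of the two endpoints $a_p,a_{p+r+1}$ (each one of $0$, $\pm1$ coming from some $\eta_i$, or a genuine external $0/1$) and by the set of indices $\{i,\dots,j\}\subseteq\{1,\dots,d\}$ such that $\eta_i,\dots,\eta_j$ occur in the inner subword $(a_{p+1},\dots,a_{p+r})$.

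In the first step I would handle the \emph{deconcatenation cuts}, i.e.\ cuts with $a_p=a_0=0$ and $a_{p+r+1}\in\{\eta_{j+1},1\}$ for some $j$: these are precisely the cuts whose inner part exhausts a prefix $\eta_1,\dots,\eta_j$ with all intervening zeros, forcing $|\bfk_{1,j}|=r$. Changing variables $\eta_i\to\eta_i/\eta_{j+1}$ on the right factor (and using (I6)) decouples the sums and produces the first line \eqref{eqn:dr:deconcat}. The second step handles the \emph{$0$-$\eps$ and $\eps$-$0$ cuts}, where $a_p$ is an internal zero between $\eta_i$ and $\eta_{i+1}$ while $a_{p+r+1}=\eta_{j+1}$ (or symmetrically), so the inner subword contains $\eta_{i+1},\dots,\eta_j$. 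Rewriting $I^\fl(0;\dots,\eta_{j+1})$ via (I6) and (I5) as $\zeta^\fl_{r-|\bfk_{i+1,j}|}(\bfk_{i+1,j})$ (resp.\ its reversal) and then summing over $\eta_i$ not appearing on either side of the tensor collapses those sums to a single orbit, giving \eqref{eqn:dr:0eps} and \eqref{eqn:dr:eps0}. The $r=1$ regularized term $-\delta_{r=1}\log^\fl 2$ arises exactly because $\zeta^\fl_0(1)$ is formally $I^\fl(0;\eta;0)=0$ by (I2), so the relevant contribution must instead come from cuts with $a_p=1$ or $a_{p+r+1}=\bar1$ across a single $\eta_i=\bar1$, which through (I5)–(I7) is $-\log^\fl 2$.

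The third, and main combinatorial step, is to show that \emph{every other cut vanishes}. These are cuts in which the left tensor factor $I^\fl$ is an iterated integral whose endpoints belong to $\{0,\pm1\}$ but whose inner letters are all free sign variables $\eta_i$ with no $\eta_i$ appearing in the right tensor factor (so the sum over those $\eta_i$'s is independent of the other integral). Each such left factor has the form
\[
\sum_{\eta_{i+1},\dots,\eta_{j-1}=\pm1}I^\fl(\alpha;0_{a_0-1},\eta_{i+1},0_{a_1-1},\dots,\eta_{j-1},0_{a_{j-i-1}-1};\beta)
\]
for fixed $\alpha,\beta\in\{\pm1\}$, which vanishes by Lemma \ref{Murakami-lemma9}. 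The same mechanism dispatches the spurious $\eta_1$-factor when $\eta_1$ is swept into the left factor but not paired up.

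The chief obstacle is bookkeeping: one must track which $\eta$-indices live in which tensor factor, apply the correct change of variable (using (I6)) to reduce to the canonical shape $\zeta^\fl_s(\bfk_{i+1,j})$ or its path-reversed form $\zeta^\fl_s(\bfk_{j,i+1})$, and ensure the sign $\eta_1$ from the definition of $t^\fm$ is carried to the correct factor. Once this accounting is done cut-by-cut, collecting the surviving contributions yields exactly the three displayed sums, and Thm.~\ref{thm:Charlton} follows.
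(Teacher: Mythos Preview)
The paper does not give its own proof of this statement: it is quoted directly from Charlton (\cite[Prop.~5.6]{Charlton2021}) and used as a black box, so there is no in-paper argument to compare yours against. That said, your strategy is exactly the one by which such a formula is derived (and is the same cut-by-cut analysis the present paper carries out by hand in, e.g., the proofs of Thm.~\ref{thm:unramfiedMtV1} and Lemma~\ref{lem:MtVdifference}): classify the cuts of $D_r$ according to whether the left endpoint is the initial $0$, an internal $0$ in some block $0_{k_i-1}$, or some $\eta_i$, and similarly for the right endpoint; the $\eta$--$\eta$ cuts vanish by Lemma~\ref{Murakami-lemma9}, the $0$--$0$ cuts vanish by (I2), and what remains are precisely the deconcatenation, $0$--$\eta$, and $\eta$--$0$ families.

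Two small points of bookkeeping to tighten. First, after summing over the free $\eta_{i+1},\dots,\eta_j$ in the $0$--$\eta$ cut you naturally land on $\tz^\fl_s(\bfk_{i+1,j})$, not $\zeta^\fl_s(\bfk_{i+1,j})$; the passage to $\zeta^\fl$ as written in the statement uses Thm.~\ref{thm-distribution} (and Charlton's normalization of $t$-values absorbs the resulting power of $2$). Second, your explanation of the $-\delta_{r=1}\log^\fl 2$ correction is not quite right: this term does not come from a separate family of ``$a_p=1$'' cuts, but rather from the degenerate instance of the $0$--$\eta$ (resp.\ $\eta$--$0$) cut with $j=i+1$, $k_{i+1}=1$ (resp.\ $k_i=1$), where the left factor is $\sum_{\eta}I^\fl(0;\eta;\eta')$, which by (I2) and (I6) equals $-\log^\fl 2$ rather than $\zeta^\fl_0(1)=0$. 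With these two adjustments your plan goes through.
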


The next result will be crucial in our proof of some families of unramified MtVs.
\begin{prop}\label{prop:unrmMtVmodProd}
Suppose $\bfk=(k_1,\ldots,k_d)$ where $k_1,\ldots,k_d\ge 2$. Then
\begin{equation*}
t^\fm(\bfk,1)+t^\fm(\bfk)\log^\fm 2, \quad
\tz^\fm(\bfk,1)+\tz^\fm(\bfk)\log^\fm 2,\quad
\tz^\fm(1,\bfk,1)+\tz^\fm(1,\bfk)\log^\fm 2
\end{equation*}
are all unramified.
\end{prop}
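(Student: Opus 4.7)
The plan is to apply Theorem~\ref{thm-Glanois}: an element $X \in \calH^2$ of weight $w$ lies in $\calH$ if and only if $D_1 X = 0$ and $D_r X \in \calL_r \otimes \calH_{w-r}$ for every odd $r$ with $1 < r < w$. I would treat the three expressions simultaneously, proceeding by induction on the weight $|\bfk|$, so that each statement is available at lower weight when needed to dispose of residues in the derivatives of the other two.

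First I would handle $D_1$. For the first expression, Lemma~\ref{lem:Charlton} gives $D_1 t^\fm(\bfk, 1) = -\log^\fl 2 \otimes t^\fm(\bfk)$, because $\bfk$ has no unit entry at either end and only the appended trailing $1$ contributes. Since $D_1$ is a derivation and $D_1 t^\fm(\bfk) = 0$ (same lemma), the term $D_1(t^\fm(\bfk)\,\log^\fm 2)$ reduces to $(1 \otimes t^\fm(\bfk))\,D_1 \log^\fm 2 = \log^\fl 2 \otimes t^\fm(\bfk)$, cancelling the first. An entirely parallel cut analysis for $\tz^\fm(\bfk, 1)$---using that cuts at interior $\eta_j$ vanish by axiom (I2), and that cuts at a single zero reduce to $I^\fl(\eta; 0; \eta') = 0$ via homothety and path reversal, so only the cut isolating $\eta_{d+1}$ survives---yields $D_1 \tz^\fm(\bfk, 1) = -\log^\fl 2 \otimes \tz^\fm(\bfk)$, again cancelled by $D_1(\tz^\fm(\bfk)\,\log^\fm 2)$. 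For the third expression cuts at both the leading $\eta_0$ and trailing $\eta_{d+1}$ contribute, and combining with $D_1(\tz^\fm(1, \bfk)\,\log^\fm 2)$ simplifies to
\begin{equation*}
D_1\bigl(\tz^\fm(1, \bfk, 1) + \tz^\fm(1, \bfk)\,\log^\fm 2\bigr) = -\log^\fl 2 \otimes \bigl(\tz^\fm(\bfk, 1) + \tz^\fm(\bfk)\,\log^\fm 2\bigr),
\end{equation*}
which lies in $\calL_1 \otimes \calH$ by the second statement of the proposition.

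For odd $r > 1$ I would apply Charlton's Theorem~\ref{thm:Charlton} to expand $D_r t^\fm(\bfk, 1)$, and perform the analogous direct cut analysis for $D_r \tz^\fm(\bfk, 1)$ and $D_r \tz^\fm(1, \bfk, 1)$. Because $D_r \log^\fm 2 = 0$ for $r > 1$, we have $D_r(t^\fm(\bfk)\,\log^\fm 2) = D_r t^\fm(\bfk) \cdot (1 \otimes \log^\fm 2)$, and similarly for the $\tz^\fm$ versions. The deconcatenation terms~\eqref{eqn:dr:deconcat} of $D_r t^\fm(\bfk, 1)$ whose cut stops short of the trailing $1$ produce right factors $t^\fm(\bfk_{j+1, d}, 1)$, which pair exactly with $t^\fl(\bfk_{1,j}) \otimes t^\fm(\bfk_{j+1, d})\,\log^\fm 2$ coming from $D_r t^\fm(\bfk) \cdot (1 \otimes \log^\fm 2)$ to form $t^\fl(\bfk_{1, j}) \otimes \bigl(t^\fm(\bfk_{j+1, d}, 1) + t^\fm(\bfk_{j+1, d})\,\log^\fm 2\bigr)$, which is unramified by the inductive hypothesis since $|\bfk_{j+1,d}| < |\bfk|$. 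The $0$-$\epsilon$ and $\epsilon$-$0$ contributions~\eqref{eqn:dr:0eps}-\eqref{eqn:dr:eps0} whose cut includes the appended $1$ produce right factors of the form $t^\fm(\bfk_{1, i-1}, |\bfk_{i, d}| + 1 - r)$ with all entries $\ge 2$, hence unramified by Lemma~\ref{lem:MtVall>1}, while those that do not include the $1$ produce right factors with a trailing $1$ that are again paired with the matching pieces of $D_r(t^\fm(\bfk)\,\log^\fm 2)$ to give inductive instances of statement (a).

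The main obstacle is the combinatorial bookkeeping required in this $r > 1$ step: every potentially ramified right factor arising in Charlton's three-sum decomposition~\eqref{eqn:dr:deconcat}-\eqref{eqn:dr:eps0} must be matched with its correct partner in $D_r(t^\fm(\bfk)\,\log^\fm 2)$, and the argument must be adapted to $\tz^\fm$ (by repeating the cut analysis with the trivial $\eta$-weighting that defines $\tz^\fm$ in place of $\eta_1$ defining $t^\fm$) and then to $\tz^\fm(1, \bfk, 1)$, where cuts touching the leading $1$ are paired against $D_r(\tz^\fm(1, \bfk)\,\log^\fm 2)$ and reduced to lower-weight instances of statement (b). The edge cases in which $|\bfk_{i, j}| - r = 1$ creates an internal unit in a right factor, or in which the middle entry of the quotient coincides with an existing boundary point, require separate but analogous treatment fitting the same inductive template.
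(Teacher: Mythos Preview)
Your overall strategy—induction on $|\bfk|$ combined with Theorem~\ref{thm-Glanois}, using Charlton's formula for $D_r$ and pairing the deconcatenation terms against the Leibniz contributions from $D_r(\cdots\log^\fm 2)$—is exactly the paper's approach, and your treatment of the first two expressions is correct.

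There is, however, a genuine error in your $D_1$ analysis of the third expression. When you say ``cuts at both the leading $\eta_0$ and trailing $\eta_{d+1}$ contribute'' you are counting only the cut $I^\fl(0;\eta_0;\eta_1)$ at the front and missing its companion $I^\fl(\eta_0;\eta_1;0)$. Because $\tz^\fm$ sums over all $\eta$'s with \emph{no} weighting, these two cuts cancel (by path reversal and relabelling $\eta_0\leftrightarrow\eta_1$), exactly as in the proof of Lemma~\ref{lem-D1tzk1}. Hence $D_1\tz^\fm(1,\bfk,1)=-\log^\fl 2\otimes\tz^\fm(1,\bfk)$ with only the trailing contribution surviving, and likewise $D_1\tz^\fm(1,\bfk)=0$. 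The third expression then has $D_1=0$ outright. Your displayed formula $-\log^\fl 2\otimes\bigl(\tz^\fm(\bfk,1)+\tz^\fm(\bfk)\log^\fm 2\bigr)$ is nonzero, and the claim that it ``lies in $\calL_1\otimes\calH$'' does not rescue the argument: Theorem~\ref{thm-Glanois} demands $D_1X=0$, equivalently $D_1X\in\calL_1\otimes\calH_{w-1}$ with $\calL_1=\calL_1^1=0$, and $\log^\fl 2\notin\calL_1$.

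A smaller point: your worry about ``edge cases in which $|\bfk_{i,j}|-r=1$ creates an internal unit'' is unfounded. The strict inequality $r<|\bfk_{i,j}|-1$ in lines~\eqref{eqn:dr:0eps}--\eqref{eqn:dr:eps0} of Theorem~\ref{thm:Charlton} forces $|\bfk_{i,j}|-r\ge 2$, so no internal unit ever appears in the right factor and Lemma~\ref{lem:MtVall>1} applies directly.
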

\begin{proof} Let's consider the motivic MtVs first.
By Glanois's result Thm.~\ref{thm-Glanois} we only need to show that $D_r\Big(t^\fm(\bfk,1)+ t^\fm(\bfk)\log^\fm 2\Big)$ is unramified for odd $1<r\le |\bfk|$ and vanishes for $r=1$. As $D_r$'s are derivations,
by Lemma~\ref{lem:Charlton} we clearly have
\begin{equation*}
D_1\Big(t^\fm(\bfk,1)+ t^\fm(\bfk)\log^\fm 2\Big)=-\log^\fl 2 \ot t^\fm(\bfk)+D_1\Big(t^\fm(\bfk)\log^\fm 2 \Big)
=(1\ot \log^\fm) D_1 t^\fm(\bfk)=0
\end{equation*}
by Lemma~\ref{lem:MtVall>1}.

Now assume $r$ is odd and $1<r\le |\bfk|$. Let $k_{d+1}=1$. Then by Thm.~\ref{thm:Charlton} we see that
(after setting $\bfk_{d+1,d}=\emptyset$ and $t^\fm(\emptyset)=1$)
\begin{align*}
 & D_r\Big(t^\fm(\bfk,1)+ t^\fm(\bfk)\log^\fm 2\Big) =
 \sum_{1 \le j \le d} \delta_{|\bfk_{1,j}|=r} t^\fl(\bfk_{1,j}) \ot
 \Big(t^\fm(\bfk_{j+1,d},1)+t^\fm(\bfk_{j+1,d})\log^\fm 2 \Big) \\[1ex]
& + \!\! \sum_{1 \le i < j \le d}
	\delta_{|\bfk_{i+1,j}| \le r < |\bfk_{i,j}|-1} \zeta^\fl_{r-|\bfk_{i+1,j}|} (\bfk_{i+1,j}) \ot \\[1ex]
& \hskip2cm \Big( t^\fm(\bfk_{1,i-1},|\bfk_{i,j}| - r, \bfk_{j+1,d},1)
  +t^\fm(\bfk_{1,i-1},|\bfk_{i,j}| - r, \bfk_{j+1,d})\log^\fm 2 \Big) \\[1ex]
& - \!\! \sum_{1 \le i < j \le d}
			\delta_{|\bfk_{i,j-1}| \le r <|\bfk_{i,j}|-1} \zeta^\fl_{r-|\bfk_{i,j-1}|} (\bfk_{j-1,i}) \ot \\[1ex]
& \hskip2cm \Big( t^\fm(\bfk_{1,i-1},|\bfk_{i,j}| - r, \bfk_{j+1,d},1)
   + t^\fm(\bfk_{1,i-1},|\bfk_{i,j}| - r, \bfk_{j+1,d}) \log^\fm 2 \Big) \\
& + \!\! \sum_{1 \le i\le d}
	\delta_{|\bfk_{i+1,d+1}| \le r < |\bfk_{i,d}|} \zeta^\fl_{r-|\bfk_{i+1,d+1}|}(\bfk_{i+1,d+1}) \ot t^\fm(\bfk_{1,i-1},|\bfk_{i,d}|+1 - r) .
\end{align*}		
Every term in the first three summands is unramified by induction while the last sum is also unramified by Lemma~\ref{lem:MtVall>1}.

The proof that $\tz^\fm(\bfk,1)+\tz^\fm(\bfk)\log^\fm 2 $ and
$\tz^\fm(1,\bfk,1)+\tz^\fm(1,\bfk)\log^\fm 2$ are unramified is completely similar using
next lemma so we leave it to the interested reader.
\end{proof}

\begin{lem}\label{lem-D1tzk1}
For any nontrivial $\bfk$ we have
\begin{equation*}
D_1\tz^\fm_a(\bfk,1)=-\log^\fl 2\ot \tz^\fm_a(\bfk).
\end{equation*}
\end{lem}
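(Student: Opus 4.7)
The plan is to compute $D_1\tz^\fm_a(\bfk,1)$ directly by expanding all possible length-one cuts of the motivic iterated integrals $I^\fm(0;0_a,\eta_1,0_{k_1-1},\dots,\eta_d,0_{k_d-1},\eta_{d+1};1)$ and then summing over $\bfeta\in\{\pm1\}^{d+1}$. First I would establish, from the axioms (I2), (I4), (I5), (I6), (I8), the auxiliary identities: (a) $I^\fl(\alpha;0;\beta)=0$ for all $\alpha,\beta\in\{0,\pm1\}$, so cuts whose middle entry is a zero never contribute; (b) $\sum_{\eta=\pm1}I^\fl(0;\eta;R)=-\log^\fl 2$ and $\sum_{\eta=\pm1}I^\fl(L;\eta;0)=+\log^\fl 2$ for all $L,R\in\{\pm1\}$; and (c) $\sum_{\eta=\pm1}I^\fl(L;\eta;R)=0$ for $L,R\in\{\pm1\}$. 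Item (c) is the most delicate: the individual integrals $I^\fm(\pm1;\pm1;\mp1)$ are formally divergent and must be regularized via path composition through the intermediate point $0$ before (I5) and (I4) yield the cancellation.

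Consequently only cuts at some $\eta_i$ with exactly one neighbor equal to $0$ survive. I would label such a cut \emph{type} $(A1)$ when its left neighbor is $0$ (contribution $-\log^\fl 2\otimes Q$) and \emph{type} $(A2)$ when its right neighbor is $0$ (contribution $+\log^\fl 2\otimes Q$), with $Q$ the sum of the quotient integrals over the remaining $\eta_j$. Inspecting the defining sequence: type $(A1)$ at $\eta_i$ for $1\le i\le d$ applies iff $k_i=1$ and ($i=1$ or $k_{i-1}\ge2$); type $(A1)$ at $\eta_{d+1}$ applies iff $k_d\ge2$; and type $(A2)$ at $\eta_j$ for $1<j\le d$ applies iff $k_{j-1}=1$ and $k_j\ge2$. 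In every surviving cut, $Q$ equals a $\tz^\fm_a$-value of the composition obtained from $(\bfk,1)$ by deleting one specified entry equal to $1$.

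The final step is a combinatorial pairing organized by the maximal runs of consecutive $1$'s in $\bfk$. For each such run $[p,q]$ with $q<d$, both the type $(A1)$ cut at $\eta_p$ and the type $(A2)$ cut at $\eta_{q+1}$ apply; they produce the same quotient composition (deleting any single $1$ from a run of $1$'s yields an identical sequence) with opposite signs, so they cancel, while the interior positions of the run contribute nothing. The terminal run $[p_t,d]$ of length $s\ge1$, present exactly when $\bfk$ ends in $s$ ones, is the exception: only the type $(A1)$ cut at $\eta_{p_t}=\eta_{d-s+1}$ survives, and its quotient equals $\tz^\fm_a(\bfk)$, because deleting one of the $s+1$ trailing $1$'s in $(\bfk,1)$ and keeping the rest reproduces exactly $\bfk$. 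When $\bfk$ has no trailing $1$'s ($k_d\ge2$) the same term $-\log^\fl 2\otimes\tz^\fm_a(\bfk)$ arises instead from the cut at $\eta_{d+1}$. The main obstacle is item (c) above; once that and the run-by-run bookkeeping are in place, the rest reduces to routine checks.
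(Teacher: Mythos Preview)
Your proposal is correct and follows essentially the same approach as the paper: both expand $D_1$ cut by cut, kill the interior cuts $(\eta_{\ell-1};\eta_\ell;\eta_{\ell+1})$ using the vanishing of $\sum_{\eta}I^\fl(\alpha;\eta;\beta)$ (your item (c), which is exactly Lemma~\ref{Murakami-lemma9} in the paper, so it is not an obstacle but an available tool), and then show that the two boundary cuts of each block of consecutive $\eta$'s cancel, leaving only the terminal contribution $-\log^\fl 2\ot\tz^\fm_a(\bfk)$. The only cosmetic difference is that the paper matches the two boundary cuts via the cyclic substitution $\eta_i\to\eta_{i+1}\to\cdots\to\eta_j\to\eta_j\eta_i/\eta_{i+1}$, whereas you first evaluate $\sum_\eta I^\fl(0;\eta;R)=-\log^\fl2$ and $\sum_\eta I^\fl(L;\eta;0)=+\log^\fl2$ and then observe that deleting any single $1$ from a run of $1$'s yields the same quotient composition; these are two ways of recording the same cancellation.
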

\begin{proof}
Observe that nontrivial cuts of $D_1\tz^\fm_a(\bfk)$ can appear when $\bfk$ has unit components. First, assume
such a component appears in the following block:
\begin{equation*}
\sum_{\eta_1,\dots,\eta_d=\pm1} D_1 I^\fm(0;\bfga,0,\eta_i,\eta_{i+1},\dots,\eta_{j-1},\eta_j,0,\bfgb;1)
\end{equation*}
for some strings $\bfga$ and $\bfgb$. We observe that nontrivial cuts produced from the above block $0,\eta_i,\dots,\eta_j,0$
lead to
\begin{align*}
& \sum_{\eta_1,\dots,\eta_d=\pm1} \Big(I^\fl(0;\eta_i;\eta_{i+1})\ot I^\fm(0;\bfga,0,\eta_{i+1},\eta_{i+2},\dots,\eta_{j-1},\eta_j,0,\bfgb;1)\\
& +I^\fl(\eta_{j-1};\eta_j;0)\ot I^\fm(0;\bfga,0,\eta_i,\eta_{i+1},\dots,\eta_{j-2},\eta_{j-1},0,\bfgb;1) \Big)\\
& +\sum_{\eta_1,\dots,\eta_d=\pm1} \sum_{\ell=i+1}^{j-1} I^\fl(\eta_{\ell-1};\eta_\ell;\eta_{\ell+1})\ot I^\fm(0;\bfga,0,\eta_i,\dots,\widehat{\eta_\ell},\dots,\eta_j,0,\bfgb;1).
\end{align*}
The first two terms cancel if we apply to the second term the substitution
$\eta_i\to \eta_{i+1}\to \cdots\to \eta_{j-1} \to \eta_j \to \eta_j\eta_i/\eta_{i+1}$.
Further, for each fixed $\ell$, by Lemma~\ref{Murakami-lemma9} the sum
\begin{equation*}
\sum_{\eta_\ell =\pm1} I^\fl(\eta_{\ell-1};\eta_\ell;\eta_{\ell+1})=0.
\end{equation*}
Therefore we see that $D_1\tz^\fm_a(\bfk)=D_1\zeta^\fm_a(\bfk)=0$.

Now if there is a unit component at the end then the above argument shows that all but one term is left after cancelation
and the lemma follows easily.
\end{proof}

We are now ready to prove the following theorem.

\begin{thm}\label{thm:unramfiedMtV1}
Let $n,e\in\N$ and all components of $\bfq$ are at least $2$. Then the motivic MtV $t^\fm(\{2n\}_e,1,\bfq)$ is unramified.
Hence, $t(\{2n\}_e,1,\bfq)$ is unramified.
\end{thm}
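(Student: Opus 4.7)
The plan is to apply Glanois's criterion (Theorem~\ref{thm-Glanois}) by induction on the weight $w = 2ne + 1 + |\bfq|$ of $\bfk = (\{2n\}_e, 1, \bfq)$. Writing the components as $k_1 = \cdots = k_e = 2n$, $k_{e+1} = 1$, and $(k_{e+2}, \dots, k_d) = \bfq$, Lemma~\ref{lem:Charlton} gives $D_1 t^\fm(\bfk) = 0$ immediately, since neither endpoint of $\bfk$ equals $1$. It therefore suffices to show $D_r t^\fm(\bfk) \in \calL_r \otimes \calH_{w-r}$ for every odd $r$ with $1 < r < w$, by unpacking the derivation via Theorem~\ref{thm:Charlton}.

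Because every entry of $\bfk$ is a positive integer, each left tensor factor appearing in $D_r$ is a motivic MZV and automatically lies in $\calL$; only the right factors require checking. In the deconcatenation piece \eqref{eqn:dr:deconcat}, the constraint $|\bfk_{1,j}| = r$ (odd) forces $j \ge e+1$, because $|\bfk_{1,j}| = 2nj$ is even for $j \le e$. The subcase $j = e+1$ yields $t^\fl(\{2n\}_e, 1) \otimes t^\fm(\bfq)$, whose left factor is in $\calL$ by Proposition~\ref{prop:unrmMtVmodProd} (the correction term $t^\fm(\{2n\}_e)\log^\fm 2$ is a product and thus vanishes in $\calL$), and whose right factor is unramified by Lemma~\ref{lem:MtVall>1}. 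For $e+1 < j < d$ the left factor $t^\fl(\{2n\}_e, 1, \bfq_{1, j-e-1})$ is of the original form with strictly smaller weight, hence in $\calL$ by the induction hypothesis, while the right factor is a tail of $\bfq$ with all entries $\ge 2$ and is unramified.

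The middle cuts in \eqref{eqn:dr:0eps} and \eqref{eqn:dr:eps0} split into three cases according to the position of the unit entry $k_{e+1}$ relative to $[i,j]$: the cut lies entirely in $\bfq$ when $e+2 \le i < j \le d$; the cut absorbs the $1$ when $i \le e+1 \le j$; or the cut lies strictly inside the $\{2n\}_e$ block when $1 \le i < j \le e$. In the first case the right factor $t^\fm(\{2n\}_e, 1, \bfq'')$ is of the original form with $|\bfq''| < |\bfq|$ and all entries $\ge 2$ (the new middle entry $|\bfk_{i,j}| - r \ge 2$ by the constraint $r < |\bfk_{i,j}| - 1$), so the induction hypothesis applies. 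In the second case the right factor has all entries $\ge 2$ and is unramified by Lemma~\ref{lem:MtVall>1}. The third case is the main obstacle, because the right factor retains the unit entry and is neither of the original form nor accessible via Lemma~\ref{lem:MtVall>1}. The key observation that resolves it is a palindromic cancellation: inside the uniform block one has $\bfk_{i+1,j} = \bfk_{j-1,i} = \{2n\}_{j-i}$ and $|\bfk_{i+1,j}| = |\bfk_{i,j-1}| = 2n(j-i)$, so the matching summands of \eqref{eqn:dr:0eps} and \eqref{eqn:dr:eps0} have identical constraints and identical left and right factors, and cancel pairwise because of their opposite signs. Once this cancellation is invoked, every surviving contribution lies in $\calL_r \otimes \calH_{w-r}$, and Theorem~\ref{thm-Glanois} closes the induction.
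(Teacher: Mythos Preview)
Your proof is correct and follows essentially the same strategy as the paper: induction on weight via Glanois's criterion, with the crucial palindromic cancellation inside the uniform $\{2n\}_e$ block. The main difference is packaging: you invoke Charlton's formula (Theorem~\ref{thm:Charlton}) directly, so the middle-cut left factors are already level-one $\zeta^\fl$-values and lie in $\calL$ automatically, whereas the paper computes the cuts from the iterated-integral definition and obtains $\tz^\fl$-factors that must then be reduced to MZVs via the distribution relation (Theorem~\ref{thm-distribution}) and Proposition~\ref{prop:unrmMtVmodProd}. Your route therefore sidesteps some of the case analysis the paper carries out on the left factors.

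One minor wording point: your sentence ``each left tensor factor appearing in $D_r$ is a motivic MZV and automatically lies in $\calL$'' is literally false for the deconcatenation piece \eqref{eqn:dr:deconcat}, where the left factor is a $t^\fl$-value. Since you treat those separately in the next paragraph (via Proposition~\ref{prop:unrmMtVmodProd} and the induction hypothesis) there is no actual gap, but it would be cleaner to restrict that claim explicitly to the middle-cut terms \eqref{eqn:dr:0eps}--\eqref{eqn:dr:eps0}.
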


\begin{proof}
Let $\bfk=(\{2n\}_e,1,\bfq)$. By Lemma~\ref{lem:Charlton} we see that $D_1 t^\fm(\bfk)=0$. Assume now $1<r<|\bfk|$ is odd. Then by Lemma~\ref{Murakami-lemma9} we only need to consider the cuts of $D_r$ either starting at or ending at 0:

\begin{center}
\begin{tikzpicture}[scale=0.9]
\node (A0) at (0.05,0) {$\ \hskip-5.2cm 0;\dots,\eta_i,0,\dots,0,\eta_{i+1},\dots,\eta_j,0,\dots,0,\eta_{j+1},
 \dots,\eta_{e+1},\eta_{e+2},\dots,\eta_g,0,\dots,0,\eta_{g+1},\dots$};
\node (A1) at (-4.8,-0.4) {${}$};
\node (A2) at (-7,0.4) {${}$};
\node (A3) at (-5.8,-0.6) {${}$};
\node (A4) at (-3.4,0.6) {${}$};
\node (A5) at (-3.5,-0.8) {${}$};
%\node (A8) at (0.9,0.8) {${}$};
\draw (-10.8,-0.25) to (-10.8,-0.4) to (A1) node {$\cic{1}$} to (1.66,-0.4) to (1.66,-0.25);
\draw (-9.56,0.25) to (-9.56,0.4) to (A2) node {$\cic{2}$} to (-4.4,0.4) to (-4.4,0.25);
\draw (-8.4,-0.25) to (-8.4,-0.6) to (A3) node {$\cic{3}$} to (-3.1,-0.6) to (-3.1,-0.25);
\draw (-9.6,0.25) to (-9.6,0.6) to (A4) node {$\cic{4}$} to (2.8,0.6) to (2.8,0.25);
\draw (-8.5,-0.25) to (-8.5,-0.8) to (A5) node {$\cic{5}$} to (1.7,-0.8) to (1.7,-0.25);
%\draw (-1.2,0.25) to (-1.2,0.8) to (A8) node {$\cic{8}$} to (3.0,0.8) to (3.0,0.25);
\end{tikzpicture}

\begin{tikzpicture}[scale=0.9]
\node (A0) at (0.05,0) {$\dots,\eta_g,0,\dots,0,\eta_{g+1},\dots,\eta_h,0,\dots,0,\eta_{h+1},\dots;1$};
\node (A6) at (-0.8,0.4) {${}$};
\node (A7) at (-1,-0.5) {${}$};
\draw (-3.6,0.25) to (-3.6,0.4) to (A6) node {$\cic{6}$} to (1.8,0.4) to (1.8,0.25);
\draw (-2.5,-0.25) to (-2.5,-0.5) to (A7) node {$\cic{7}$} to (0.6,-0.5) to (0.6,-0.25);
\node (A) at (0,-1.4) {Possible cuts of $D_r t^\fm(\bfk)$.};
\end{tikzpicture}
\end{center}

We now show by induction on the depth of $\bfk$ that $D_r t^\fm(\bfk)$ is unramified. Let $\bfq=(q_1,\dots,q_d)$ and
$\bfk=(\{2n\}_{e},1,\bfq)=(k_1,\dots,k_\ell)$ with $\ell=e+1+d$. Put $\eta_{\ell+1}=1$ and
$a_m=k_m-1$ for all $1\le m\le \ell$. Then for all $e+2\le g\le \ell$ by substitution
$\eta_m\to \eta_m\eta_g$ for $1\le m<g$ we get
\begin{align*}
\ncic{1}=&\, \gd_{|\bfk_{1,g-1}|=r}\sum_{\eta_1,\dots,\eta_\ell=\pm1}\eta_1 I^\fl(0;\eta_1,\dots;\eta_g)\ot I^\fm(0;\eta_g,\dots;1) \\
=&\, \gd_{|\bfk_{1,g-1}|=r} \sum_{\eta_1,\dots,\eta_\ell=\pm1}\eta_1 \eta_g I^\fl(0;\eta_1,\dots;1)\ot I^\fm(0;\eta_g,\dots;1) \\
=&\,\gd_{|\bfk_{1,g-1}|=r} \, t^\fl(\bfk_{1,g-1}) \ot t^\fm(\bfk_{g,\ell}) .
\end{align*}
By induction, $t^\fl(\bfk_{1,g-1})$ is unramified (if $g=e+2$ then this follows from Prop.~\ref{prop:unrmMtVmodProd}). By Lemma~\ref{lem:MtVall>1}, $t^\fm(\bfk_{g,\ell})$ is unramified.
Next, for any fixed $1\le i<j\le e$
\begin{align*}
\ncic{2}=&\, \sum_{b=0}^{a_j-1} \gd_{|\bfk_{i,j-1}|+b=r} \sum_{\eta_1,\dots,\eta_\ell=\pm1}\eta_1 I^\fl(\eta_i;0_{a_i},\dots,\eta_j,0_b;0)\ot
I^\fm(0;\eta_1,\dots,\eta_i,0_{a_j-b},\eta_{j+1},\dots;1), \\
\ncic{3}=&\, \sum_{b=0}^{a_i-1} \gd_{|\bfk_{i+1,j}|+b=r} \sum_{\eta_1,\dots,\eta_\ell=\pm1}\eta_1 I^\fl(0;0_b,\eta_{i+1},\dots;\eta_{j+1})\ot
I^\fm(0;\eta_1,\dots,\eta_i,0_{a_i-b},\eta_{j+1},\dots;1)\\
=&\, -\sum_{b=0}^{a_i-1} \gd_{|\bfk_{i+1,j}|+b=r} \sum_{\eta_1,\dots,\eta_\ell=\pm1}\eta_1 I^\fl(\eta_{j+1};\dots,\eta_{i+1},0_b;0)\ot
I^\fm(0;\eta_1,\dots,\eta_i,0_{a_i-b},\eta_{j+1},\dots;1).
\end{align*}
Since $a_i=a_j$ we get $\ncic{2}+\ncic{3}=0$. Next, for any fixed $1\le i\le e+1, e+2\le g\le \ell$
\begin{align*}
\ncic{4}=&\, \sum_{b=0}^{a_g-1} \gd_{|\bfk_{i,g-1}|+b=r} \sum_{\eta_1,\dots,\eta_\ell=\pm1}\eta_1 I^\fl(\eta_i;0_{a_i},\dots,\eta_g,0_b;0)\ot
I^\fm(0;\eta_1,\dots,\eta_i,0_{a_g-b},\eta_{g+1},\dots;1) \\
=&\, - \sum_{b=0}^{a_g-1} \gd_{|\bfk_{i,g-1}|+b=r}\, \tz_b^\fl(\bfk_{g-1,i})\ot t^\fm(\bfk_{1,i-1},k_g-b,\bfk_{g+1,\ell})
\end{align*}
which is unramified by Thm.~\ref{thm-distribution} (if $i=e+1$ and $g>e+2$ then by Prop.~\ref{prop:unrmMtVmodProd})
and Lemma~\ref{lem:MtVall>1}, except for the case $i=e+1=g-1$ and $b=0$ when $\tz_b^\fl(\bfk_{g-1,i})=\tz^\fl(1)$ which is ramified. Similarly,
\begin{align*}
\ncic{5}=&\, \sum_{b=0}^{a_i-1} \gd_{|\bfk_{i+1,g-1}|+b=r} \sum_{\eta_1,\dots,\eta_\ell=\pm1}\eta_1 I^\fl(0;0_b,\eta_{i+1},\dots;\eta_{g})\ot
I^\fm(0;\eta_1,\dots,\eta_i,0_{a_i-b},\eta_{g},\dots;1)\\
=&\, \sum_{b=0}^{a_i-1} \gd_{|\bfk_{i+1,g-1}|+b=r} \, \tz_b^\fl(\bfk_{i+1,g-1})\ot t^\fm(\bfk_{1,i-1},k_i-b,\bfk_{g,\ell})
\end{align*}
which is unramified by Thm.~\ref{thm-distribution} (if $g=e+2$ and $i<e$ then by Prop.~\ref{prop:unrmMtVmodProd})
and Lemma~\ref{lem:MtVall>1}, except for the case $i=e=g-2$ and $b=0$ when $\tz_b^\fl(\bfk_{i+1,g-1})=\tz^\fl(1)$ which is ramified.
But this ramified term is canceled exactly by the exceptional ramified term of $\ncic{4}$\,.

Further, if $\dep(\bfq)=1$ then neither $\ncic{6}$\, and $\ncic{7}$\, can appear and therefore we have proved the base case of the induction.
If $\dep(\bfq)>1$ then both $\ncic{6}$\, and $\ncic{7}$\, have the shape of
$\tz_b^\fl(\bfs)\ot t^\fm(\{2n\}_{e},1,\bfp)$ where $b\ge0$, $\dep(\bfp)<\dep(\bfq)$, and every component of $\bfs$ and $\bfp$ is at least two.
By induction and Thm.~\ref{thm-distribution} we see that these are all unramified.

We have now completed the proof of the theorem by Thm.~\ref{thm-Glanois}.
\end{proof}

\begin{lem}\label{lem:MtVdifference}
Suppose all components of $\bfq$ are at least two. For any positive odd number $O>1$ and even number $E$
the difference $A=t^\fm(E,O,1,\bfq)-t^\fm(O,E,1,\bfq)$ is unramified.
\end{lem}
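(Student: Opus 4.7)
By Theorem~\ref{thm-Glanois}, showing $A$ is unramified reduces to verifying $D_1 A = 0$ and $D_r A \in \calL_r \ot \calH_{w-r}$ for every odd $r$ with $1 < r < w$, where $w := E + O + 1 + |\bfq|$. Set $\bfk_1 := (E, O, 1, \bfq)$ and $\bfk_2 := (O, E, 1, \bfq)$. I would proceed by induction on $w$. The vanishing $D_1 A = 0$ is immediate from Lemma~\ref{lem:Charlton}: neither $\bfk_j$ begins with $1$ (since $E \ge 2$ and $O \ge 3$) nor ends with $1$ (the last entry of $\bfq$ is $\ge 2$), so $D_1 t^\fm(\bfk_1) = D_1 t^\fm(\bfk_2) = 0$ individually.

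For odd $r$ with $1 < r < w$, I would apply Theorem~\ref{thm:Charlton} to both $t^\fm(\bfk_1)$ and $t^\fm(\bfk_2)$ and subtract. The deconcatenation contribution from $j$ with $|\bfk_{1,j}| = r$ splits into four subcases. At $j = 1$ only $\bfk_2$ contributes (since $E$ is even and $r$ is odd), giving $-t^\fl(O) \ot t^\fm(E, 1, \bfq)$, whose right factor is unramified by Theorem~\ref{thm:unramfiedMtV1}. At $j = 2$ the left factor $t^\fl(E, O) - t^\fl(O, E)$ vanishes in $\calL$ by Corollary~\ref{cor:DBreduce}. The case $j = 3$ is excluded by parity, since $|\bfk_{1,3}| = E + O + 1$ is even. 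For $j \ge 4$ the right factor $t^\fm(q_{j-2}, \ldots, q_d)$ is unramified by Lemma~\ref{lem:MtVall>1}, while the left-factor difference $t^\fl(E, O, 1, q_1, \ldots, q_{j-3}) - t^\fl(O, E, 1, q_1, \ldots, q_{j-3})$ is the $\calL$-image of a strictly smaller-weight instance of $A$ and so lies in $\calL$ by induction.

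For the second and third sums of Theorem~\ref{thm:Charlton} (whose left factor is a colored Euler sum), cuts lying entirely inside the $\bfq$-tail (both $i, j \ge 4$) produce identical left factors for $\bfk_1$ and $\bfk_2$, and the difference collapses to $t^\fm(E, O, 1, \bfp) - t^\fm(O, E, 1, \bfp)$ where $\bfp$ still has all components $\ge 2$ (the inserted middle entry $|\bfk_{i,j}| - r$ is $\ge 2$) and total weight $w - r < w$; these are absorbed by the induction. Cuts overlapping positions $1$, $2$, or $3$ must be handled case by case in $(i, j)$, using Theorem~\ref{thm-distribution} to rewrite colored Euler sum factors as motivic MZVs whenever interior $1$'s appear, Corollary~\ref{cor:DBreduce} to make odd-weight depth-two $t^\fl$ differences vanish in $\calL$, and Lemma~\ref{lem:MtVall>1} together with Theorem~\ref{thm:unramfiedMtV1} to dispatch right factors of the requisite shapes.

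The main obstacle is this final case analysis. The subcases are indexed by the position of $(i, j)$ relative to $\{1, 2, 3\}$ and by which of the two patterns in Theorem~\ref{thm:Charlton} occurs, and in each one the potentially ramified pieces must either cancel between $\bfk_1$ and $\bfk_2$ or be identified with explicitly unramified objects via the identities above. The induction itself is clean --- every subproblem has strictly smaller total weight --- but the bookkeeping for the head-overlap cases is where the bulk of the work lies.
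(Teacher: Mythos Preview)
Your proposal is correct and follows essentially the same route as the paper's proof: both argue by induction on the weight (equivalently on $\dep(\bfq)$), dispose of $D_1$ via Lemma~\ref{lem:Charlton}, and for odd $r>1$ perform a cut-by-cut analysis in which the only genuinely ramified contribution is the deconcatenation at $j=2$, where $t^\fl(E,O)=t^\fl(O,E)=-t^\fl(E+O)$ in $\calL$ by Corollary~\ref{cor:DBreduce} forces the cancellation in $A$. Your packaging through Theorem~\ref{thm:Charlton} is a mild organizational difference from the paper's direct iterated-integral cut pictures, but the substance is identical; note that when you carry out the head-overlap cases you will also want Proposition~\ref{prop:unrmMtVmodProd} (for left factors whose composition has a trailing or leading $1$, where Theorem~\ref{thm-distribution} alone does not apply).
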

\begin{proof}
Clearly $D_1A=0$ by Lemma~\ref{lem:Charlton}. Assume $r>1$ is odd. For simplicity we set $\ell=3+\dep(\bfq)$,
$\bfk=(E,O,1,\bfq)$, $a=E-1$ and $b=O-1$.
Using Lemma~\ref{Murakami-lemma9} we only need to consider the following cuts for $D_r t^\fm(E,O,1,\bfq)$ either starting
at or ending at 0. Note that if a cut starts after $\eta_3$ (so $\dep(\bfq)>1$) then it is unramified by induction and Lemma~\ref{lem:MtVall>1}.

\begin{center}
\begin{tikzpicture}[scale=0.9]
\node (A0) at (0.05,0) {$0;\eta_1,0_{a-c-1},0,0_c,\eta_2,0_c,0,0_{b-c-1},\eta_3,\eta_4,\dots,\eta_i,\dots,0,\dots,\eta_{i+1},\dots;1$};
\node (A1) at (-2.6,-0.4) {${}$};
\node (A2) at (-4.2,-0.9) {${}$};
\node (A3) at (-3.6,0.4) {${}$};
\node (A4) at (-1.3,-0.6) {${}$};
\node (A5) at (-2.5,0.6) {${}$};
\node (A6) at (-1,-0.8) {${}$};
\node (A7) at (0.6,0.5) {${}$};
\node (A8) at (0.2,0.7) {${}$};
\node (A9) at (1.8,-0.5) {${}$};
\draw (-6.46,-0.25) to (-6.46,-0.4) to (A1) node {$\cic{1}$} to (.13,-0.4) to (.13,-0.25);
\draw (-6.5,-0.25) to (-6.5,-0.9) to (A2) node {$\cic{2}$} to (2.2,-0.9) to (2.2,-0.25);
\draw (-5.9,0.25) to (-5.9,0.4) to (A3) node {$\cic{3}$} to (-1.77,0.4) to (-1.77,0.25);
\draw (-5.9,-0.25) to (-5.9,-0.6) to (A4) node {$\cic{4}$} to (3.58,-0.6) to (3.58,-0.25);
\draw (-3.98,0.25) to (-3.98,0.6) to (A5) node {$\cic{5}$} to (.15,0.6) to (.15,0.25);
\draw (-3.98,-0.25) to (-3.98,-0.8) to (A6) node {$\cic{6}$} to (2.16,-0.8) to (2.16,-0.25);
\draw (-2.9,0.25) to (-2.9,0.5) to (A7) node {$\cic{7}$} to (3.56,0.5) to (3.56,0.25);
\draw (-1.73,0.25) to (-1.73,0.7) to (A8) node {$\cic{8}$} to (2.15,0.7) to (2.15,0.25);
\draw (.17,-0.25) to (.17,-0.5) to (A9) node {$\cic{9}$} to (3.54,-0.5) to (3.54,-0.25);
\node (A) at (0,-1.4) {Possible cuts of $D_r t^\fm(E,O,1,\bfq)$.};
\end{tikzpicture}
\end{center}

We have
\begin{align*}
\ncic{1}=&\, \gd_{E+O=r}\sum_{\eta_1,\dots,\eta_\ell=\pm1}\eta_1 I^\fl(0;\eta_1,0_a,\eta_2,0_b;\eta_3)\ot I^\fm(0;\eta_3,\eta_4,\dots;1) \\
=&\, \gd_{E+O=r} \, t^\fl(E,O) \ot t^\fm(1,\bfq)
= -\gd_{E+O=r} \, t^\fl(E+O) \ot t^\fm(1,\bfq) \quad\text{by Cor.~\ref{cor:DBreduce};}\\
\ncic{2}=&\, \sum_{i=4}^\ell \gd_{|\bfk_{1,i-1}|=r}\sum_{\eta_1,\dots,\eta_\ell=\pm1}\eta_1 I^\fl(0;\eta_1,\dots;\eta_i)\ot I^\fm(0;\eta_i,\dots;1) \\
=&\,\sum_{i=4}^\ell \gd_{|\bfk_{1,i-1}|=r} \, t^\fl(\bfk_{1,i-1}) \ot t^\fm(\bfk_{i,\ell}) \\
&\, \text{unramified by induction (or Prop.~\ref{prop:unrmMtVmodProd} if $i=4$) and Lemma~\ref{lem:MtVall>1};} \\
\ncic{3}=&\, \sum_{c=0}^{O-2} \gd_{E+c=r}\sum_{\eta_1,\dots,\eta_\ell=\pm1}\eta_1
 I^\fl(\eta_1;0_a,\eta_2,0_c;0)\ot I^\fm(0;\eta_1,0_{b-c},\eta_3,\eta_4,\dots;1) \\
=&\,- \sum_{c=0}^{O-2} \gd_{E+c=r} \, \tz_c^\fl(E) \ot t^\fm(O-c,1,\bfq) \\
&\, \text{unramified by Thm.~\ref{thm-distribution} and Thm.~\ref{thm:unramfiedMtV1} since $c$ is odd;} \\
\ncic{4}=&\, \sum_{i=4}^\ell \sum_{c=0}^{k_i-2} \gd_{|\bfk_{1,i-1}|+c=r}\sum_{\eta_1,\dots,\eta_\ell=\pm1}\eta_1
 I^\fl(\eta_1;\dots,\eta_i,0_c;0)\ot I^\fm(0;\eta_1,0_{k_i-c-1},\eta_{i+1},\dots;1) \\
=&\, - \sum_{i=4}^\ell \gd_{|\bfk_{1,i-1}|+c=r} \, \tz_c^\fl(\bfk_{i-1,1}) \ot t^\fm(k_i-c,\bfk_{i+1,\ell}) \\
&\, \text{unramified by Thm.~\ref{thm-distribution} (or Prop.~\ref{prop:unrmMtVmodProd} if $i=4$) and Lemma~\ref{lem:MtVall>1};}\\
\ncic{5}=&\, \sum_{c=0}^{E-2} \gd_{c+O=r}\sum_{\eta_1,\dots,\eta_\ell=\pm1}\eta_1
 I^\fl(0;0_c,\eta_2,0_b;\eta_3)\ot I^\fm(0;\eta_1,0_{a-c},\eta_3,\eta_4,\dots;1) \\
=&\, \sum_{c=0}^{E-2} \gd_{c+O=r} \, \tz_c^\fl(O) \ot t^\fm(E-c,1,\bfq) \\
&\, \text{unramified by Thm.~\ref{thm-distribution} and Thm.~\ref{thm:unramfiedMtV1} since $c$ is even;}\\
\ncic{6}=&\, \sum_{i=4}^\ell \sum_{c=0}^{E-2} \gd_{c+|\bfk_{2,i-1}|=r} \sum_{\eta_1,\dots,\eta_\ell=\pm1}\eta_1
 I^\fl(0;0_c,\eta_2,0_b,\eta_3,\dots;\eta_i)\ot I^\fm(0;\eta_1,0_{a-c},\eta_i,\dots;1) \\
=&\, \sum_{i=4}^\ell \sum_{c=0}^{E-2}\gd_{c+|\bfk_{2,i-1}|=r}\, \tz_c^\fl(\bfk_{2,i-1}) \ot t^\fm(E-c,\bfk_{i,\ell})\\
&\, \text{unramified by Thm.~\ref{thm-distribution} (or Prop.~\ref{prop:unrmMtVmodProd} if $i=4$) and Lemma~\ref{lem:MtVall>1};}\\
\ncic{7}=&\, \sum_{i=4}^\ell \sum_{c=0}^{k_i-2} \gd_{|\bfk_{2,i-1}|+c=r}\sum_{\eta_1,\dots,\eta_\ell=\pm1}\eta_1
 I^\fl(\eta_2;0_b,\eta_3,\dots,\eta_i,0_c;0)\ot I^\fm(0;\eta_1,0_a,\eta_2,0_{k_i-c-1},\eta_{i+1},\dots;1) \\
=&\, - \sum_{i=4}^\ell \sum_{c=0}^{k_i-2} \gd_{|\bfk_{2,i-1}|+c=r} \, \tz_c^\fl(\bfk_{i-1,2}) \ot t^\fm(E,k_i-c,\bfk_{i+1,\ell}) \\
&\, \text{unramified by Thm.~\ref{thm-distribution} and Lemma~\ref{lem:MtVall>1};}\\
\ncic{8}=&\, \sum_{i=4}^\ell \sum_{c=0}^{O-2} \gd_{|\bfk_{3,i-1}|+c=r}\sum_{\eta_1,\dots,\eta_\ell=\pm1}\eta_1
 I^\fl(0;0_c,\eta_3,\dots;\eta_i)\ot I^\fm(0;\eta_1,0_a,\eta_2,0_{b-c},\eta_i,\dots;1) \\
=&\, \sum_{i=4}^\ell \sum_{c=0}^{O-2}\gd_{|\bfk_{3,i-1}|+c=r} \, \tz_c^\fl(\bfk_{3,i-1}) \ot t^\fm(E,O-c,\bfk_{i,\ell}) \\
&\, \text{unramified by Thm.~\ref{thm-distribution} and Thm.~\ref{thm:unramfiedMtV1} (note that $r>1$ so if $c=0$ then $i>4$);}\\
\ncic{9}=&\, \sum_{i=4}^\ell \sum_{c=0}^{k_i-2} \gd_{|\bfk_{4,i-1}|+c+1=r}\sum_{\eta_1,\dots,\eta_\ell=\pm1}\eta_1
 I^\fl(\eta_3;\eta_4,\dots,\eta_i,0_c;0)\ot I^\fm(0;\eta_1,0_a,\eta_2,0_b,\eta_3,0_{k_i-c-1},\eta_{i+1},\dots;1) \\
=&\, - \sum_{i=4}^\ell \sum_{c=0}^{k_i-2} \gd_{|\bfk_{4,i-1}|+c+1=r} \, \tz_c^\fl(\bfk_{i-1,4},1) \ot t^\fm(E,O,k_i-c,\bfk_{i+1,\ell})
\end{align*}
which is unramified by Prop.~\ref{prop:unrmMtVmodProd} (if $i>4$) and Lemma~\ref{lem:MtVall>1}.
If $i=4$ then $c>0$ so $\tz_c^\fl(\bfk_{i-1,4},1)=\tz_c^\fl(1)$ is unramified.

The computation for $D_r t^\fm(O,E,1,\bfq)$ is completely similar such that all but $\ncic{1}$\, are unramified. But
the contribution from $\ncic{1}$\, is the same for $D_r t^\fm(O,E,1,\bfq)$ and $D_r t^\fm(E,O,1,\bfq)$
by Cor.~\ref{cor:DBreduce} and therefore $D_r A$ is always unramified.
This completes the proof of the lemma by Thm.~\ref{thm-Glanois}.
\end{proof}

\begin{thm}\label{thm:unramfiedMtV2}
Suppose $m,n\in\N$ and all components of $\bfq$ are at least $2$. Then the motivic MtV $t^\fm(2n,2m,2n,1,\bfq)$ is unramified.
\end{thm}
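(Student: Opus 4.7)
The plan is to proceed by induction on $d = \dep(\bfq)$, following the template of Theorem \ref{thm:unramfiedMtV1} and Lemma \ref{lem:MtVdifference}. Set $\bfk = (2n, 2m, 2n, 1, \bfq)$, of depth $\ell = 4 + d$ and weight $w$. By Theorem \ref{thm-Glanois} it suffices to establish $D_1 t^\fm(\bfk) = 0$ and $D_r t^\fm(\bfk) \in \calL_r \ot \calH_{w-r}$ for every odd $r$ with $1 < r < w$. The vanishing of $D_1 t^\fm(\bfk)$ is immediate from Lemma \ref{lem:Charlton}, since the first and last entries of $\bfk$ are $2n \ge 2$ and $q_d \ge 2$.

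For odd $r > 1$ I would expand $D_r t^\fm(\bfk)$ via Theorem \ref{thm:Charlton}. The deconcatenation terms $t^\fl(\bfk_{1, j}) \ot t^\fm(\bfk_{j+1, \ell})$ survive only when $|\bfk_{1, j}|$ is odd, which forces $j \ge 4$ because the head $(2n, 2m, 2n)$ has even weight. For $j = 4$, Proposition \ref{prop:unrmMtVmodProd} places $t^\fl(2n, 2m, 2n, 1)$ in $\calL$ while $t^\fm(\bfq)$ is unramified by Lemma \ref{lem:MtVall>1}; for $5 \le j \le \ell - 1$ the inductive hypothesis applied to the strictly shorter tail $(q_1, \ldots, q_{j-4})$ places $t^\fl(\bfk_{1, j})$ in $\calL$, and again the quotient has all components at least two.

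The crux of the argument is the inner cuts with $1 \le i < j \le 3$, which exploits the palindromic structure of the head. For $(i, j) = (1, 3)$, cut (b) of Theorem \ref{thm:Charlton} contributes $\zeta^\fl_{r - 2m - 2n}(\bfk_{2, 3}) \ot t^\fm(|\bfk_{1, 3}| - r, 1, \bfq)$, while cut (c) contributes the negative of $\zeta^\fl_{r - 2m - 2n}$ of the reversal of $\bfk_{1, 2}$; since $\bfk_{2, 3} = (2m, 2n)$ equals that reversal and the index ranges and quotients coincide, the two contributions cancel identically. For $(i, j) \in \{(1, 2), (2, 3)\}$ I would regroup the four cuts by their single-zeta left factor: the pair (b) at $(1, 2)$ plus (c) at $(2, 3)$ assembles into $\zeta^\fl_{r - 2m}(2m) \ot [t^\fm(O, 2n, 1, \bfq) - t^\fm(2n, O, 1, \bfq)]$ with $O = 2m + 2n - r$ odd and $\ge 3$, and symmetrically (c) at $(1, 2)$ with (b) at $(2, 3)$ produces a $\zeta^\fl_{r - 2n}(2n)$-multiple of the analogous difference. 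Lemma \ref{lem:MtVdifference} then shows that each bracketed difference lies in $\calH$.

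The remaining inner cuts have $j \ge 4$; here the left factors $\zeta^\fl_c(\bfk_{i+1, j})$ and $\zeta^\fl_c(\bfk_{j-1, i})$ are level-one motivic MZVs (hence automatically in $\calL$, with Proposition \ref{prop:unrmMtVmodProd} invoked when a trailing $1$ appears), and the right quotients $t^\fm(\bfk_{1, i-1}, M, \bfk_{j+1, \ell})$ satisfy $M \ge 2$ thanks to the range constraint $r < |\bfk_{i, j}| - 1$. When $i \le 4$ every component of the quotient is at least two and Lemma \ref{lem:MtVall>1} applies; when $i \ge 5$ the quotient takes the form $t^\fm(2n, 2m, 2n, 1, \bfq')$ with $\dep(\bfq') < d$, covered by the inductive hypothesis. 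The main obstacle is the combinatorial bookkeeping needed to ensure the palindrome pairings actually match: the equality $|\bfk_{1, 2}| = |\bfk_{2, 3}| = 2m + 2n$, the shared range constraints, and the reversal identity that $\bfk_{2, 3}$ equals the reversal of $\bfk_{1, 2}$ are all essential. Once these matchings are verified, every contribution lies in $\calL \ot \calH$ and Theorem \ref{thm-Glanois} yields that $t^\fm(2n, 2m, 2n, 1, \bfq)$ is unramified; the classical statement follows by applying the period map.
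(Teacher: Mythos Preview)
Your proposal is correct and follows essentially the same approach as the paper: both reduce to the four cuts at $(i,j)\in\{(1,2),(2,3)\}$ and pair them via Lemma~\ref{lem:MtVdifference}, while all remaining cuts are unramified by the arguments already used in Lemma~\ref{lem:MtVdifference} and Theorem~\ref{thm:unramfiedMtV1}. Your treatment is in fact slightly more explicit than the paper's, since you spell out the exact cancellation of the $(i,j)=(1,3)$ cuts coming from the palindrome $\bfk_{2,3}=(2m,2n)=\bfk_{2,1}$, whereas the paper absorbs this into the phrase ``similar to the argument in Lemma~\ref{lem:MtVdifference}.''
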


\begin{proof} Let $\bfk=(2n,2m,2n,1,\bfq)$ and $\ell=4+\dep(\bfq)$.
First, $D_1 t^\fm(\bfk)=0$ by Lemma~\ref{lem:Charlton}. Suppose $r>1$ is odd. Similar to the argument in Lemma~\ref{lem:MtVdifference}
we only need to consider the following four types of cuts for $D_r t^\fm(\bfk)$.

\begin{center}
\begin{tikzpicture}[scale=0.9]
\node (A0) at (0.05,0) {$0;\eta_1,0_{2n-c-2},0,0_c,\eta_2,0_{2m-1},\eta_3,0_c,0,0_{2n-c-2},\eta_4,\eta_5,\dots;1$};
\node (A1) at (-3.2,-0.4) {${}$};
\node (A2) at (1.43,-0.4) {${}$};
\node (A3) at (-1.3,0.4) {${}$};
\node (A4) at (-.1,0.6) {${}$};
\draw (-5.0,-0.25) to (-5.0,-0.4) to (A1) node {$\cic{1}$} to (-.8,-0.4) to (-.8,-0.25);
\draw (-.6,-0.25) to (-.6,-0.4) to (A2) node {$\cic{2}$} to (3.56,-0.4) to (3.56,-0.25);
\draw (-2.8,0.25) to (-2.8,0.4) to (A3) node {$\cic{3}$} to (0.2,0.4) to (0.2,0.25);
\draw (-1.6,0.25) to (-1.6,0.6) to (A4) node {$\cic{4}$} to (1.5,0.6) to (1.5,0.25);
\node (A) at (0,-1.4) {Possible cuts of $D_r t^\fm(2n,2m,2n,1,\bfq)$.};
\end{tikzpicture}
\end{center}

We have, for $0\le c\le 2m-2$
\begin{align*}
\ncic{1}=&\, \gd_{2n+c=r}\sum_{\eta_1,\dots,\eta_\ell=\pm1}\eta_1
 I^\fl(\eta_1;0_{2n-1},\eta_2,0_c;0)\ot I^\fm(0;\eta_1,0_{2m-c-1},0_{2n-1},\eta_4,\eta_5,\dots;1) \\
=&\, -\gd_{2n+c=r} \, \tz_c^\fl(2n) \ot t^\fm(2m-c,2n,1,\bfq), \\
\ncic{2}=&\, \gd_{2n+c=r}\sum_{\eta_1,\dots,\eta_\ell=\pm1}\eta_1
 I^\fl(0;0_c,\eta_3,0_{2n-1};\eta_4)\ot I^\fm(0;\eta_1,0_{2n-1},\eta_2,0_{2m-c-1},\eta_4,\eta_5,\dots;1) \\
=&\, \gd_{2n+c=r} \, \tz_c^\fl(2n) \ot t^\fm(2n,2m-c,1,\bfq),
\end{align*}
while for $0\le c\le 2n-2$
\begin{align*}
\ncic{3}=&\, \gd_{2m+c=r}\sum_{\eta_1,\dots,\eta_\ell=\pm1}\eta_1
 I^\fl(0;0_c,\eta_2,0_{2m-1};\eta_3)\ot I^\fm(0;\eta_1,0_{2n-c-1},\eta_3,0_{2n-1},\eta_4,\eta_5,\dots;1) \\
=&\, \gd_{2m+c=r} \, \tz_c^\fl(2m) \ot t^\fm(2n-c,2n,1,\bfq), \\
\ncic{4}=&\, \gd_{2m+c=r}\sum_{\eta_1,\dots,\eta_\ell=\pm1}\eta_1
 I^\fl(\eta_2;0_{2m-1},\eta_3,0_c;0)\ot I^\fm(0;\eta_1,0_{2n-1},\eta_2,0_{2n-c-1},\eta_4,\eta_5,\dots;1) \\
=&\, - \gd_{2m+c=r} \, \tz_c^\fl(2m) \ot t^\fm(2n,2n-c,1,\bfq).
\end{align*}
Hence, by Lemma~\ref{lem:MtVdifference} we see immediately that $\ncic{1}+\ncic{2}$\, and $\ncic{3}+\ncic{4}$\, are both unramified.
This completes the proof of the theorem by Thm.~\ref{thm-Glanois}.
\end{proof}

\begin{lem}\label{lem:MtV3to1}
Suppose all components of $\bfq$ are at least $2$. Then
\begin{equation*}
3t^\fm(3,2,1,\bfq)+t^\fm(2,1,2,1,\bfq)
\end{equation*}
is unramified.
\end{lem}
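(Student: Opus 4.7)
The plan is to invoke Theorem \ref{thm-Glanois}: I must verify that $D_1$ annihilates $3t^\fm(3,2,1,\bfq)+t^\fm(2,1,2,1,\bfq)$ and that $D_r$ of this combination lies in $\calL_r\otimes\calH_{w-r}$ for every odd $r$ with $1<r<w$, where $w=6+|\bfq|$. Since every component of $\bfq$ is at least $2$, neither $(3,2,1,\bfq)$ nor $(2,1,2,1,\bfq)$ begins or ends with $1$, so Lemma \ref{lem:Charlton} immediately gives $D_1=0$ on each summand and hence on the combination.

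For each odd $r>1$, I would expand $D_r t^\fm(3,2,1,\bfq)$ and $D_r t^\fm(2,1,2,1,\bfq)$ via Theorem \ref{thm:Charlton}, enumerating the deconcatenation and interior-cut tensors in the same spirit as the proofs of Theorem \ref{thm:unramfiedMtV1}, Theorem \ref{thm:unramfiedMtV2}, and Lemma \ref{lem:MtVdifference}. The bulk of the resulting contributions are automatically in $\calL\otimes\calH$: factors of the form $\tz^\fl_c(\bfs)$ with admissible $\bfs$ reduce to rational multiples of $\zeta^\fl$ by Theorem \ref{thm-distribution}; $\fm$-factors whose entries are all $\geq 2$ are unramified by Lemma \ref{lem:MtVall>1}; $\fm$-factors matching the shape $t^\fm(\{2n\}_e,1,\bfs)$ are unramified by Theorem \ref{thm:unramfiedMtV1}; admissible double MtVs of odd weight reduce to single $t^\fl$ values via Corollary \ref{cor:DBreduce}; and $\fl$-factors of shape $t^\fl(\bfp,1)$ with $\bfp$ admissible are unramified in $\calL^2$ by Proposition \ref{prop:unrmMtVmodProd}, since the correction term $t^\fl(\bfp)\log^\fl 2$ is a product and vanishes in $\calL$.

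The residual contributions are precisely those whose $\fm$-factor is a \emph{ramified} MtV of the form $t^\fm(1,\bfs)$, $t^\fm(b,1,\bfq)$, or $t^\fm(1,b,\bfq)$, produced by cuts that straddle one of the unit components in the leading blocks $(3,2,1)$ and $(2,1,2,1)$. For each such ramified $\fm$-factor I would collect its total $\fl$-coefficient from both MtVs with respective weights $3$ and $1$; the key verification is that these coefficients are calibrated so that every collected $\fl$-coefficient reduces to a product in $\calL$, yielding cancellation in $\calL\otimes\calH$. As a prototype, the $r=5$ deconcatenation cut contributes $\bigl(3t^\fl(3,2)+t^\fl(2,1,2)\bigr)\otimes t^\fm(1,\bfq)$, and one expects the bracketed $\fl$-scalar to collapse to a product via Corollary \ref{cor:DBreduce} combined with a short shuffle manipulation. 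The main obstacle is the combinatorial bookkeeping: systematically enumerating all ramified $\fm$-factors produced by both MtVs, extracting the correct binomial factors from the $\tz^\fl$ expansions via Lemma \ref{lem:zetaBar}, and verifying the cancellation cut-by-cut across every odd $r$. Once this case analysis is complete, Theorem \ref{thm-Glanois} yields the claimed unramifiedness.
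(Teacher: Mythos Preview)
Your outline matches the paper's proof closely: same use of Theorem~\ref{thm-Glanois}, same $D_1$ vanishing via Lemma~\ref{lem:Charlton}, same classification of $D_r$ contributions into ``automatically unramified'' interior cuts (via Theorem~\ref{thm-distribution}, Lemma~\ref{lem:MtVall>1}, Theorem~\ref{thm:unramfiedMtV1}, Proposition~\ref{prop:unrmMtVmodProd}) and a small residual. Two points deserve sharpening.

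First, you omit the induction step. Deconcatenation at a position inside $\bfq$ produces $t^\fl(3,2,1,\bfq')\otimes t^\fm(\bfq'')$ and $t^\fl(2,1,2,1,\bfq')\otimes t^\fm(\bfq'')$; neither left factor is individually unramified, and none of the lemmas you cite covers them. The paper combines them as $\bigl(3t^\fl(3,2,1,\bfq')+t^\fl(2,1,2,1,\bfq')\bigr)\otimes t^\fm(\bfq'')$ and invokes the lemma itself by induction on $\dep(\bfq)$. You should make this explicit.

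Second, and more substantively, your handling of the $r=5$ residual $\bigl(3t^\fl(3,2)+t^\fl(2,1,2)\bigr)\otimes t^\fm(1,\bfq)$ is the crux, and ``Corollary~\ref{cor:DBreduce} combined with a short shuffle manipulation'' does not obviously do the job. Corollary~\ref{cor:DBreduce} only reduces depth-two $t^\fl$ values; it says nothing about $t^\fl(2,1,2)$. The paper instead invokes the explicit evaluations
\[
t^\fm(3,2)=\tfrac{4}{7}t^\fm(2)t^\fm(3)-t^\fm(5),\qquad t^\fm(2,1,2)=3t^\fm(5)-t^\fm(2)t^\fm(3),
\]
citing the data mine \cite{BlumleinBrVe2010} and Charlton \cite{Charlton2021}, so that $3t^\fl(3,2)+t^\fl(2,1,2)$ becomes a pure product and hence vanishes in $\calL$. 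This numerical identity is the one genuinely nontrivial input, and your proposal does not supply a mechanism to obtain it.
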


\begin{proof} Let $\bfk=(3,2,1,\bfq)$. Then clearly $D_1t^\fm(\bfk)=0$ by Lemma~\ref{lem:Charlton}.
Assume now $r>1$ is odd. By Lemma~\ref{Murakami-lemma9} we only need to consider the following cuts of $D_r t^\fm(\bfk)$:

\begin{center}
\begin{tikzpicture}[scale=0.9]
\node (A0) at (0.05,0) {$\hskip-1.4cm 0;\eta_1,0,0,\eta_2,0,\eta_3,\eta_4,\dots,\eta_i,\dots,0,\dots,\eta_{i+1},\dots;1$};
\node (A1) at (-4.4,-0.4) {${}$};
\node (A2) at (-4.0,0.4) {${}$};
\node (A3) at (-3.4,0.6) {${}$};
\node (A4) at (-2.9,-0.6) {${}$};
\node (A5) at (-1.2,-0.4) {${}$};
\node (A6) at (-1.7,0.4) {${}$};
\node (A7) at (-.7,0.6) {${}$};
\node (A8) at (-4.2,-0.7) {${}$};
\node (A9) at (-1.6,-0.8) {${}$};
\node (A10) at (-2.0,0.8) {${}$};
\node (A11) at (-2.4,1) {${}$};
\node (A12) at (-2.2,-1) {${}$};
\draw (-5.42,-0.25) to (-5.42,-0.4) to (A1) node {$\cic{1}$} to (-3.38,-0.4) to (-3.38,-0.25);
\draw (-4.96,0.25) to (-4.96,0.4) to (A2) node {$\cic{2}$} to (-3.02,0.4) to (-3.02,0.25);
\draw (-4.38,0.25) to (-4.38,0.6) to (A3) node {$\cic{3}$} to (-2.4,0.6) to (-2.4,0.25);
\draw (-3.96,-0.25) to (-3.96,-0.6) to (A4) node {$\cic{4}$} to (-1.8,-0.6) to (-1.8,-0.25);
\draw (-3.42,-0.25) to (-3.42,-0.4) to (A5) node {$\cic{5}$} to (1.0,-0.4) to (1.0,-0.25);
\draw (-2.98,0.25) to (-2.98,0.4) to (A6) node {$\cic{6}$} to (-0.42,.4) to (-0.42,0.25);
\draw (-2.34,0.25) to (-2.34,0.6) to (A7) node {$\cic{7}$} to (0.96,0.6) to (0.96,0.25);
\draw (-5.46,-0.25) to (-5.46,-0.7) to (A8) node {$\cic{8}$} to (-2.4,-0.7) to (-2.4,-0.25);
\draw (-5.5,-0.25) to (-5.5,-0.8) to (A9) node {$\cic{9}$} to (-0.42,-0.8) to (-0.42,-0.25);
\draw (-5.0,0.25) to (-5.0,0.8) to (A10) node {$\cic{\hskip-1.2pt1\!0}$} to (1.0,0.8) to (1.0,0.25);
\draw (-4.42,0.25) to (-4.42,1) to (A11) node {$\cic{\hskip-1.2pt1\!1}$} to (-0.38,1) to (-0.38,0.25);
\draw (-4.0,-0.25) to (-4.0,-1) to (A12) node {$\cic{\hskip-1.2pt1\!2}$} to (-0.38,-1) to (-0.38,-0.25);
\node (A) at (0,-1.6) {Possible cuts of $D_r t^\fm(3,2,1,\bfq)$.};
\end{tikzpicture}
\end{center}

By tedious but straightforward computation we can see that
\begin{align*}
\ncic{1}=&\, \gd_{r=3}\, t^\fl(3) \ot t^\fm(2,1,\bfq), \quad
\ncic{2}=-\gd_{r=3} \, \tz^\fl(3) \ot t^\fm(2,1,\bfq), \\
\ncic{3}=&\, \gd_{r=3}\, \tz_1^\fl(2) \ot t^\fm(2,1,\bfq), \quad
\ncic{4}= -\gd_{r=3}\, \tz^\fl(2,1) \ot t^\fm(3,\bfq), \\
\ncic{5}=&\, -\sum_{i=4}^\ell \sum_{c=0}^{k_i-2} \gd_{|\bfk_{i-1,2}|+c=r}\, \tz_c^\fl(\bfk_{i-1,2}) \ot t^\fm(3,k_i-c,\bfk_{i+1,\ell}), \\
\ncic{6}=&\, \sum_{i=5}^\ell \gd_{|\bfk_{3,i-1}|=r}\, \tz^\fl(\bfk_{3,i-1}) \ot t^\fm(3,2,\bfk_{i,\ell}), \\
\ncic{7}=&\, -\sum_{i=4}^\ell \sum_{c=0}^{k_i-2} \gd_{|\bfk_{3,i-1}|+c=r}\, \tz_c^\fl(\bfk_{i-1,3}) \ot t^\fm(3,2,k_i-c,\bfk_{i+1,\ell}), \\
\ncic{8}=&\,  \gd_{r=5}\, t^\fl(3,2) \ot t^\fm(\bfk_{3,\ell}), \\
\ncic{9}=&\, \sum_{i=5}^\ell \gd_{|\bfk_{1,i-1}|=r}\, t^\fl(\bfk_{1,i-1}) \ot t^\fm(\bfk_{i,\ell}), \\
\ncic{\hskip-1.2pt1\!0}=&\, -\sum_{i=4}^\ell \sum_{c=0}^{k_i-2} \gd_{|\bfk_{1,i-1}|+c=r} \, \tz_c^\fl(\bfk_{i-1,1}) \ot t^\fm(k_i-c,\bfk_{i+1,\ell}), \\
\ncic{\hskip-1.2pt1\!1}=&\, \sum_{i=5}^\ell \gd_{|\bfk_{2,i-1}|+1=r}\, \tz_1^\fl(\bfk_{2,i-1}) \ot t^\fm(2,\bfk_{i,\ell}), \\
\ncic{\hskip-1.2pt1\!2}=&\, \sum_{i=5}^\ell \gd_{|\bfk_{2,i-1}|=r} \, \tz^\fl(\bfk_{2,i-1}) \ot t^\fm(3,\bfk_{i,\ell}).
\end{align*}
All terms $\tz_c^\fl(\bfl)\ot t^\fm(\bfs)$ above in $D_r t^\fm(3,2,1,\bfq)$ fall into three categories: either
$\bfs=(2,1,\bfq)$ or all components of $\bfs$ are at least two or $\bfs=(3,2,1,\bfq')$ with $\dep(\bfq')<\dep(\bfq)$.
Thus, all these $t^\fm(\bfs)$ are unramified by Thm.~\ref{thm:unramfiedMtV1}, Lemma~\ref{lem:MtVall>1} and induction, respectively.
The first factor $\tz_c(\bfl)$ in the above is always unramified by
Thm.~\ref{thm-distribution} or Prop.~\ref{prop:unrmMtVmodProd}.

On the other hand, if the terms have the shape $t^\fl(\bfl)\ot t^\fm(\bfs)$ then $\bfl=\bfk_{1,i}$ for some $i$ and $|\bfl|=r$ is odd,
then these terms must have the form (i) $t^\fl(3)\ot t^\fm(2,1,\bfq)$, or (ii) $t^\fl(3,2)\ot t^\fm(1,\bfq)$,
or (iii) $t^\fl(3,2,1,\bfq')\ot t^\fm(\bfq'')$ for some nontrivial decomposition $\bfq=(\bfq',\bfq'')$.
The first form is unramified by Thm.~\ref{thm:unramfiedMtV1}.

Similar computation is valid almost verbatim for $D_r t^\fm(2,1,2,1,\bfq)$. Thus we only need to consider the terms of
the forms (i)' $t^\fl(2,1)\ot t^\fm(2,1,\bfq)$, or (ii)' $t^\fl(2,1,2)\ot t^\fm(1,\bfq)$,
or (iii)' $t^\fl(2,1,2,1,\bfq')\ot t^\fm(\bfq'')$ for some nontrivial decomposition $\bfq=(\bfq',\bfq'')$.
The form (i)' is unramified by Cor.~\ref{cor:DBreduce} and Thm.~\ref{thm:unramfiedMtV1}.
For (ii)' and (iii)' we have to look at $3t^\fm(3,2,1,\bfq)+t^\fm(2,1,2,1,\bfq)$.
Then form (iii)+(iii)' gives rise to
$$\big(3t^\fl(3,2,1,\bfq')+t^\fl(2,1,2,1,\bfq')\big)\ot t^\fm(\bfq'')$$
which is unramified by induction.

Finally, we consider (ii)+(ii)' ((ii) appears as $\ncic{8}$ when $r=5$)
\begin{equation*}
 \big(3t^\fl(3,2)+t^\fl(2,1,2)\big)\ot t^\fm(1,\bfq).
\end{equation*}
But by easy computation (or the data mine \cite{BlumleinBrVe2010}, or \cite[Thm.~3.3]{Charlton2021} and the formula\footnote{Note that our normalization factors for MtVs are different from those used in \cite{Charlton2021}.} two lines above Remark 5.10 in \cite{Charlton2021}) we find
\begin{equation*}
t^\fm(3,2) =\frac47 t^\fm(2)t^\fm(3)- t^\fm(5) \quad\text{and}\quad t^\fm(2,1,2) =3t^\fm(5)-t^\fm(2)t^\fm(3).
\end{equation*}
Therefore,
\begin{equation*}
 (ii)+(ii)'=\big(3t^\fl(3,2)+t^\fl(2,1,2)\big)\ot t^\fm(1,\bfq)= \frac57 \big(t^\fm(2)t^\fm(3)\big)^\fl\ot t^\fm(1,\bfq) =0.
\end{equation*}
This completes the proof of the lemma.
\end{proof}

\begin{thm}\label{thm:unramfiedMtV3}
Suppose all components of $\bfq$ are at least $2$. Then the motivic MtV $t^\fm(2,1,3,2,1,\bfq)$ is unramified.
\end{thm}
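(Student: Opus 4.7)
The plan is to apply Glanois's criterion (Thm.~\ref{thm-Glanois}) to $\bfk = (2,1,3,2,1,\bfq)$ and proceed by induction on $\dep(\bfq)$. It suffices to verify that $D_1 t^\fm(\bfk) = 0$ and $D_r t^\fm(\bfk) \in \calL_r \ot \calH_{w-r}$ for every odd $r$ with $1 < r < w := |\bfk|$. The vanishing of $D_1$ follows from Lemma~\ref{lem:Charlton}, since neither $k_1 = 2$ nor the final entry of $\bfq$ equals $1$; likewise $D_{w-1} t^\fm(\bfk) = 0$ by Lemma~\ref{lem:Dw-1t}, as $\bfk$ is admissible.

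For each odd $r$ with $1 < r < w-1$, the plan is to enumerate all the cuts produced by Charlton's derivation formula (Thm.~\ref{thm:Charlton}). The resulting contributions have the shape $\zeta^\fl_c(\bfs) \ot t^\fm(\bft)$ or $t^\fl(\bfs) \ot t^\fm(\bft)$, and the bulk of them are easy: $t^\fm(\bft)$ is unramified whenever all components of $\bft$ are at least $2$ (apply Lemma~\ref{lem:MtVall>1}), or $\bft$ has the shape $(\{2n\}_e,1,\bfp)$ with the components of $\bfp$ all $\geq 2$ (apply Thm.~\ref{thm:unramfiedMtV1}), or $\bft = (2,1,3,2,1,\bfq')$ for some proper suffix $\bfq'$ of $\bfq$ (apply the induction hypothesis). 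For these cuts the left factor lies in $\calL_r$ by Thm.~\ref{thm-distribution} and Prop.~\ref{prop:unrmMtVmodProd}, while paired cuts analogous to those in the proofs of Thm.~\ref{thm:unramfiedMtV1} and Thm.~\ref{thm:unramfiedMtV2} cancel each other.

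The essential residual contributions occur only at $r = 3$ and involve the ramified MtVs $t^\fm(3,2,1,\bfq)$, arising from the cuts that excise the prefix $(2,1)$ (the deconcatenation cut with $j = 2$ and the $(i,j) = (1,3)$ cut in the sum~\eqref{eqn:dr:eps0}), and $t^\fm(2,1,2,1,\bfq)$, arising from the cuts that collapse the block $(3,2)$ to $(2)$ (the $(i,j) = (3,4)$ cut in each of~\eqref{eqn:dr:0eps} and~\eqref{eqn:dr:eps0}). After summing all the contributing cuts and simplifying the left factors in $\calL_3 = \Q\,\zeta^\fl(3)$ by means of Cor.~\ref{cor:DBreduce}, Prop.~\ref{prop:DBESreduce}, and Lemma~\ref{lem:zetaBar}, a direct computation shows that the two residual coefficients occur in ratio $3 : 1$. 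Consequently the net residual is a scalar multiple of $\zeta^\fl(3) \ot \bigl(3\,t^\fm(3,2,1,\bfq) + t^\fm(2,1,2,1,\bfq)\bigr)$, which lies in $\calL_3 \ot \calH_{w-3}$ by the preparatory Lemma~\ref{lem:MtV3to1}. Glanois's criterion then yields the unramifiedness of $t^\fm(2,1,3,2,1,\bfq)$, and applying the period map gives the stated conclusion for $t(2,1,3,2,1,\bfq)$.

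The main obstacle is the bookkeeping at $r = 3$: contributions to each of the two residual right factors come from all three summands of Thm.~\ref{thm:Charlton}, and one must verify that, after simplification, the emergent ratio is exactly $3 : 1$ (and not merely proportional up to some other ramified error term) so that Lemma~\ref{lem:MtV3to1} can be invoked to kill the residual.
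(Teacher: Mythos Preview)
Your proposal is correct and follows essentially the same route as the paper. The only cosmetic difference is that the paper computes $D_r$ directly from the motivic-integral definition (using Murakami's Lemma~\ref{Murakami-lemma9} to discard cuts that neither begin nor end at a $0$), whereas you invoke Charlton's packaged formula Thm.~\ref{thm:Charlton}; both enumerations produce the same residual at $r=3$, namely a rational multiple of $\zeta^\fl(3)\ot\bigl(3\,t^\fm(3,2,1,\bfq)+t^\fm(2,1,2,1,\bfq)\bigr)$, and both dispatch it via Lemma~\ref{lem:MtV3to1}.

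One small caution: your remark that ``paired cuts analogous to those in the proofs of Thm.~\ref{thm:unramfiedMtV1} and Thm.~\ref{thm:unramfiedMtV2} cancel each other'' is misleading here. In Thm.~\ref{thm:unramfiedMtV1} the cancellation $\ncic{2}+\ncic{3}=0$ relies on $a_i=a_j$ (all initial components equal $2n$), and in Thm.~\ref{thm:unramfiedMtV2} the pairing relies on the palindromic pattern $(2n,2m,2n)$. For $\bfk=(2,1,3,2,1,\bfq)$ no such symmetry is present, and indeed the paper does not appeal to any cancellations: every non-residual cut is shown to be \emph{individually} unramified, the right factor always landing in one of your three categories (all components $\ge 2$; shape $(2,1,\bfp)$ with $\bfp$ componentwise $\ge 2$, covered by Thm.~\ref{thm:unramfiedMtV1}; or $(2,1,3,2,1,\bfq')$ with $\dep(\bfq')<\dep(\bfq)$). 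You should simply drop the cancellation clause and instead verify that the right factors of \emph{all} cuts other than the four $r=3$ residual cuts fall into one of these three categories, which is a straightforward (if tedious) case check.
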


\begin{proof} Let $\bfk=(2,1,3,2,1,\bfq)$ and $\ell=5+\dep(\bfq)$.
First, $D_1 t^\fm(\bfk)=0$ by Lemma~\ref{lem:Charlton}. We now assume $r>1$ is odd.
Using Lemma~\ref{Murakami-lemma9} we only need to consider the following cuts of $D_r t^\fm(2,1,3,2,1,\bfq)$:

\begin{center}
\begin{tikzpicture}[scale=0.9]
\node (A0) at (0.05,0) {$0;\eta_1,0,\eta_2,\eta_3,0,0,\eta_4,0,\eta_5,\eta_6,\dots,\eta_i,\dots,0,\dots,\eta_{i+1},\dots;1$};
\node (A1) at (-4.4,-0.4) {${}$};
\node (A2) at (-3.9,0.4) {${}$};
\node (A3) at (-2.3,0.6) {${}$};
\node (A4) at (-1.9,-0.4) {${}$};
\node (A5) at (-0.6,0.4) {${}$};
\node (A6) at (.3,0.7) {${}$};
\node (A7) at (0,-0.6) {${}$};
\node (A8) at (0.6,-0.4) {${}$};
\draw (-5.5,-0.25) to (-5.5,-0.4) to (A1) node {$\cic{1}$} to (-3.4,-0.4) to (-3.4,-0.25);
\draw (-5.0,0.25) to (-5.0,0.4) to (A2) node {$\cic{2}$} to (-2.8,0.4) to (-2.8,0.25);
\draw (-3.3,0.25) to (-3.3,0.6) to (A3) node {$\cic{3}$} to (-1.3,0.6) to (-1.3,0.25);
\draw (-2.8,-0.25) to (-2.8,-0.4) to (A4) node {$\cic{4}$} to (-.82,-0.4) to (-.82,-0.25);
\draw (-2.4,0.25) to (-2.4,0.4) to (A5) node {$\cic{5}$} to (1.3,.4) to (1.3,0.25);
\draw (-1.9,0.25) to (-1.9,0.7) to (A6) node {$\cic{6}$} to (2.6,0.7) to (2.6,0.25);
\draw (-1.3,-0.25) to (-1.3,-0.6) to (A7) node {$\cic{7}$} to (1.3,-0.6) to (1.3,-0.25);
\draw (-0.78,-0.25) to (-0.78,-0.4) to (A8) node {$\cic{8}$} to (2.6,-0.4) to (2.6,-0.25);
\end{tikzpicture}

\begin{tikzpicture}[scale=0.9]
\node (A0) at (0.05,0) {$0;\eta_1,0,\eta_2,\eta_3,0,0,\eta_4,0,\eta_5,\eta_6,\dots,\eta_i,\dots,0,\dots,\eta_{i+1},\dots;1$};
\node (A9) at (-2.1,-0.4) {${}$};
\node (A10) at (-1.2,0.4) {${}$};
\node (A11) at (-1.6,-0.6) {${}$};
\node (A12) at (-0.7,0.6) {${}$};
\node (A13) at (-0.4,-0.8) {${}$};
\node (A14) at (-0.9,0.8) {${}$};
\draw (-5.5,-0.25) to (-5.5,-0.4) to (A9) node {$\cic{9}$} to (1.26,-0.4) to (1.26,-0.25);
\draw (-5.0,0.25) to (-5.0,0.4) to (A10) node {$\cic{\hskip-1.2pt1\!0}$} to (2.56,0.4) to (2.56,0.25);
\draw (-4.5,-0.25) to (-4.5,-0.6) to (A11) node {$\cic{\hskip-1.2pt1\!1}$} to (1.3,-0.6) to (1.3,-0.25);
\draw (-3.9,0.25) to (-3.9,0.6) to (A12) node {$\cic{\hskip-1.2pt1\!2}$} to (2.6,0.6) to (2.6,0.25);
\draw (-3.4,-0.25) to (-3.4,-0.8) to (A13) node {$\cic{\hskip-1.2pt1\!3}$} to (2.6,-0.8) to (2.6,-0.25);
\draw (-2.8,0.25) to (-2.8,0.8) to (A14) node {$\cic{\hskip-1.2pt1\!4}$} to (1.3,0.8) to (1.3,0.25);
\node (A) at (0,-1.4) {Possible cuts of $D_r t^\fm(2,1,3,2,1,\bfq)$.};
\end{tikzpicture}
\end{center}

We have
\begin{align*}
\ncic{1}=&\, \gd_{r=3}\sum_{\eta_1,\dots,\eta_\ell=\pm1}\eta_1
 I^\fl(0;\eta_1,0,\eta_2;\eta_3)\ot I^\fm(0;\eta_3,0,0,\eta_4,0,\eta_5,\eta_6,\dots;1) \\
=&\, \gd_{r=3}\, t^\fl(2,1) \ot t^\fm(3,2,1,\bfq) = -\gd_{r=3} \, t^\fl(3) \ot t^\fm(3,2,1,\bfq) \quad(\text{by Cor.~\ref{cor:DBreduce}})\\
=&\, -\frac74 \gd_{r=3}\, \zeta^\fl(3) \ot t^\fm(3,2,1,\bfq)= -\gd_{r=3}\, 7\tz^\fl(3) \ot t^\fm(3,2,1,\bfq), \\
\ncic{2}=&\, \gd_{r=3}\, \sum_{\eta_1,\dots,\eta_\ell=\pm1}\eta_1
 I^\fl(\eta_1;0,\eta_2,\eta_3;0)\ot I^\fm(0;\eta_1,0,0,\eta_4,0,\eta_5,\eta_6,\dots;1) \\
=&\, - \gd_{r=3}\, \tz^\fl(1,2) \ot t^\fm(3,2,1,\bfq) \\
=&\, - \gd_{r=3}\, 2\tz^\fl(3) \ot t^\fm(3,2,1,\bfq) \quad\text{by Cor.~\ref{cor:DBreduce}}, \\
\ncic{3}=&\, \gd_{r=3}\sum_{\eta_1,\dots,\eta_\ell=\pm1}\eta_1
 I^\fl(\eta_3;0,0,\eta_4;0)\ot I^\fm(0;\eta_1,0,\eta_2,\eta_3,0,\eta_5,\eta_6,\dots;1) \\
=&\, - \gd_{r=3} \, \tz^\fl(3) \ot t^\fm(2,1,2,1,\bfq), \\
\ncic{4}=&\, \gd_{r=3}\sum_{\eta_1,\dots,\eta_\ell=\pm1}\eta_1
 I^\fl(0;0,\eta_4,0;\eta_5)\ot I^\fm(0;\eta_1,0,\eta_2,\eta_3,0,\eta_5,\eta_6,\dots;1) \\
=&\, \gd_{r=3} \, \tz_1^\fl(2) \ot t^\fm(2,1,2,1,\bfq) \\
=&\, -\gd_{r=3} \, 2\tz^\fl(3) \ot t^\fm(2,1,2,1,\bfq), \\
\ncic{5}=&\, \gd_{r=|\bfk_{4,i-1}|} \sum_{\eta_1,\dots,\eta_\ell=\pm1}\eta_1
 I^\fl(0;\eta_4,0,\eta_5,\eta_6,\dots;\eta_i)\ot I^\fm(0;\eta_1,0,\eta_2,\eta_3,0,0,\eta_i,\dots;1) \\
=&\, \gd_{r=|\bfk_{4,i-1}|} \, \tz^\fl(\bfk_{4,i-1}) \ot t^\fm(2,1,3,\bfk_{i,\ell}), \\
\ncic{6}=&\, \sum_{c=0}^{k_i-2}\gd_{r=|\bfk_{4,i-1}|+c} \sum_{\eta_1,\dots,\eta_\ell=\pm1}\eta_1
 I^\fl(\eta_4;0,\eta_5,\eta_6,\dots,\eta_i,0_c;0) \\
 &\, \hskip4cm \ot I^\fm(0;\eta_1,0,\eta_2,\eta_3,0,0,\eta_4,0_{k_i-c-1},\eta_{i+1},\dots;1) \\
=&\, -\gd_{r=|\bfk_{4,i-1}|+c} \, \tz_c^\fl(\bfk_{i-1,4}) \ot t^\fm(2,1,3,k_i-c,\bfk_{i+1,\ell}), \\
\ncic{7}=&\, \gd_{r=|\bfk_{5,i-1}|} \sum_{\eta_1,\dots,\eta_\ell=\pm1}\eta_1
 I^\fl(0;\eta_5,\eta_6,\dots;\eta_i)\ot I^\fm(0;\eta_1,0,\eta_2,\eta_3,0,0,\eta_4,0,\eta_i,\dots;1) \\
=&\, \gd_{r=|\bfk_{5,i-1}|} \, \tz^\fl(\bfk_{5,i-1}) \ot t^\fm(2,1,3,2,\bfk_{i,\ell}), \\
\ncic{8}=&\, \sum_{c=0}^{k_i-2}\gd_{r=|\bfk_{5,i-1}|+c} \sum_{\eta_1,\dots,\eta_\ell=\pm1}\eta_1
 I^\fl(\eta_5;\eta_6,\dots,\eta_i,0_c;0) \\
 &\, \hskip4cm \ot I^\fm(0;\eta_1,0,\eta_2,\eta_3,0,0,\eta_4,0,\eta_5,0_{k_i-c-1},\eta_{i+1},\dots;1) \\
=&\, -\gd_{r=|\bfk_{5,i-1}|+c} \, \tz_c^\fl(\bfk_{i-1,5}) \ot t^\fm(2,1,3,2,k_i-c,\bfk_{i+1,\ell}).
\end{align*}
Hence,
\begin{equation*}
\ncic{1}+\ncic{2}+\ncic{3}+\ncic{4} = -3\gd_{r=3}\, \tz^\fl(3) \ot \Big(3t^\fm(3,2,1,\bfq)+t^\fm(2,1,2,1,\bfq)\Big)
\end{equation*}
which is unramified by Lemma~\ref{lem:MtV3to1}.
Observe that $\ncic{5}$, $\ncic{6}$, $\ncic{7}$\, and $\ncic{8}$\, are all unramified by Thm.~\ref{thm-distribution} (by Prop.~\ref{prop:unrmMtVmodProd} if $i=6$ for $\ncic{5}$\, and $\ncic{6}$)
for the left factor and Thm.~\ref{thm:unramfiedMtV1} for the right factor. Note also that $i=6$ is impossible for $\ncic{7}$\,
while $i=6$ is allowed in  $\ncic{8}$\, with $\tz_c^\fl(\bfk_5)=\tz_c^\fl(1)=(-1)^c\tz^\fl(c+1)=\tz^\fl(r)$ unramified.
Similarly,
\begin{align*}
\ncic{9}=&\, \sum_{i=6}^\ell \gd_{|\bfk_{1,i-1}|=r} \, t^\fl(\bfk_{1,i-1}) \ot t^\fm(\bfk_{i,\ell})
\end{align*}
whose right factor is unramified by Lemma~\ref{Murakami-lemma9} and whose left factor is unramified by induction
(or by Prop.~\ref{prop:unrmMtVmodProd} if $i=6$).

All other cuts in $\ncic{\hskip-1.2pt1\!0}$-$\ncic{\hskip-1.2pt1\!4}$
have the form $\tz_c^\fl(\bfl)\ot t^\fm(\bfs)$ which fall into three categories: either
all components of $\bfs$ are at least two, or $\bfs=(2,1,\bfq')$ with all components of $\bfq'$ are at least two,
or $\bfs=(2,1,3,2,1,\bfq')$ with $\dep(\bfq')<\dep(\bfq)$.
Thus, all these $t^\fm(\bfs)$'s are unramified by Thm.~\ref{thm:unramfiedMtV1}, Lemma~\ref{lem:MtVall>1} and induction, respectively.
The first factor $\tz^\fl_c(\bfl)$ in the above is always unramified by
Thm.~\ref{thm-distribution} or Prop.~\ref{prop:unrmMtVmodProd}. Note that at most two unit components can appear in $\bfl$ and
they must be at the one or both ends if they do appear.

To summarize, we have shown that $D_1 t^\fm(\bfk)=0$ and $D_r t^\fm(\bfk)$ are unramified for all odd $r<\ell$.
This finishes the proof of the theorem by Thm.~\ref{thm-Glanois}.
\end{proof}

By Lemma~\ref{lem:Charlton},  $t^\fm(\bfk)$ is ramified if $t^\fm(\bfk)$ has a unit component at either or both ends.
We now provide a family of ramified motivic MtV $t^\fm(\bfk)$ with a unique unit component not appearing at either end.
Moreover, exactly one component to the left of this unit component is odd, namely, they have the form:
\begin{equation*}
    t(E_a,O,E_b,1,\dots)
\end{equation*}
where $a,b\ge0$, $E$'s are (possibly distinct) even numbers and $O$ is an odd number.

\begin{thm}\label{thm:ramfiedMtV1}
Let $1\le j< n<\ell$. Suppose $k_j\ge 3$ is odd and $k_i$ is even for all $1\le i<n$ and $i\ne j$.
Suppose further that $k_n=1$ and $k_i\ge 2$ for all $n< i\le \ell$.
Then the motivic MtV $t^\fm(k_1,\dots,k_\ell)$ is ramified.
\end{thm}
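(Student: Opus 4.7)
The plan is to invoke Glanois's criterion (Theorem~\ref{thm-Glanois}) by exhibiting an odd $r$ for which $D_r t^\fm(\bfk) \notin \calL_r \ot \calH_{|\bfk|-r}$. I set $r = |\bfk_{1,n-1}|$, which is odd because $\bfk_{1,n-1}$ consists of $n-2$ even entries together with the single odd entry $k_j \ge 3$. Note that $D_1 t^\fm(\bfk) = 0$ immediately by Lemma~\ref{lem:Charlton}, since neither $k_1$ nor $k_\ell$ equals $1$, so the information must come from a larger derivation.

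The first step is to isolate the deconcatenation contribution in Charlton's expansion (Theorem~\ref{thm:Charlton}). Because $m \mapsto |\bfk_{1,m}|$ is strictly increasing, the unique $m$ with $|\bfk_{1,m}| = r$ is $m = n-1$, yielding the summand
\[
 t^\fl(\bfk_{1,n-1}) \ot t^\fm(1, k_{n+1}, \dots, k_\ell).
\]
The right factor starts with $k_n = 1$ and is therefore ramified by Proposition~\ref{prop:MtV1s}.

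Next I would verify that applying $(1 \ot D_1)$ kills every other term in $D_r t^\fm(\bfk)$. By Lemma~\ref{lem:Charlton}, $D_1 t^\fm(\bfs) = 0$ unless $\bfs$ begins or ends with $1$. For a central-cut right factor $t^\fm(\bfk_{1,i-1}, |\bfk_{i,j}| - r, \bfk_{j+1,\ell})$, the leading component is $k_1 \ge 2$ except possibly when $i = 1$ and $|\bfk_{1,j}| = r + 1$, which forces $j = n$; but then Charlton's strict range inequality $r < |\bfk_{i,j}| - 1 = r$ fails. The symmetric check rules out trailing $1$'s because $k_\ell \ge 2$. Hence
\[
 (1 \ot D_1)\bigl(D_r t^\fm(\bfk)\bigr) = 2\, t^\fl(\bfk_{1,n-1}) \ot \log^\fl 2 \ot t^\fm(k_{n+1},\dots,k_\ell).
\]

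The hard part will be showing this tensor is nonzero. The factor $t^\fm(k_{n+1},\dots,k_\ell)$ is a nonzero element of $\calH$ by Murakami's Lemma~\ref{lem:MtVall>1}, and $\log^\fl 2$ is the nonzero generator of $\calL_1$, so the real obstacle is proving $t^\fl(\bfk_{1,n-1}) \ne 0$ in $\calL_r$, i.e.\ that $t^\fm(\bfk_{1,n-1})$ is not a sum of products of positive-weight motivic periods. For $n = 2$ this follows at once from Lemma~\ref{lem:singleTs}, since $t^\fm(k_1)$ is a nonzero rational multiple of the generator $\zeta^\fm(k_1)$ (with $k_1 = k_j \ge 3$ odd). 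For $n \ge 3$ I would proceed by induction on $n$: if $j < n-1$, apply the deconcatenation part of $D_{r'}$ for $r' = |\bfk_{1,m}|$ with $j \le m < n-1$ (so $r'$ is odd) to reduce to a strictly shorter configuration of the same shape; if $j = n-1$, an analogous central cut isolates a nonzero coaction image. Once this nonvanishing is granted, we conclude $D_r t^\fm(\bfk) \notin \calL_r \ot \calH_{|\bfk|-r}$, and Theorem~\ref{thm-Glanois} delivers the ramification of $t^\fm(\bfk)$.
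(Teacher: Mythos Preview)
Your strategy coincides with the paper's: take $r=|\bfk_{1,n-1}|$, isolate the single deconcatenation term $t^\fl(\bfk_{1,n-1})\ot t^\fm(1,\bfk_{n+1,\ell})$, and verify that every other right factor in Charlton's expansion is killed by $D_1$. Your check of the range inequalities is correct, and the paper's proof follows exactly this line, concluding that the deconcatenation term ``cannot be cancelled'' and hence $D_r t^\fm(\bfk)\notin\calL_r\ot\calH_{|\bfk|-r}$. The paper does not explicitly verify that $t^\fl(\bfk_{1,n-1})\ne 0$ in $\calL_r^2$; you are right that without it the argument is incomplete, since if this class vanished the surviving tensor would be zero.

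Your inductive sketch for this nonvanishing, however, does not work as written. A nonzero $D_{r'} t^\fm(\bfk_{1,n-1})$ does not by itself establish indecomposability: products of positive-weight elements also have nonzero derivations, so a nonzero image in $\calL_{r'}\ot\calH$ does not separate primitives from decomposables. In the case $j<n-1$, deconcatenating at some $m$ with $j\le m<n-1$ produces a right factor $t^\fm(\bfk_{m+1,n-1})$ with all \emph{even} entries, which is not ``a shorter configuration of the same shape'' and does not feed back into the induction. The case $j=n-1$ is left entirely unspecified. For depth $\le 2$ the nonvanishing follows from Lemma~\ref{lem:singleTs} and Corollary~\ref{cor:DBreduce}, but neither the paper nor your proposal supplies an argument in higher depth.
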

\begin{proof}
Let $\bfk=(k_1,\dots,k_\ell)$ and $r=|\bfk_{1,n-1}|$. Then $r$ is odd by the given conditions. We show that $D_r t^\fm(\bfk)$ is ramified.
Indeed, the cut starting at the first 0 and ends at $\eta_n$ yields
\begin{equation}\label{equ:ramMtV}
t^\fl(\bfk_{1,n-1})\ot t^\fm(1,\bfk_{n+1,\ell})
\end{equation}
which is ramified since $D_1 t^\fm(1,\bfk_{n+1,\ell})\ne 0$ by Lemma~\ref{lem:Charlton}. All other cuts of $D_rt^\fm(\bfk)$
have the form $\pm \tz_c^\fl(\bfl)\ot t^\fm(\bfs)$ with $\tz_c^\fl(\bfl)$ being unramified
and the components at the two ends of $\bfs$ both $\ge 2$. Thus $D_1t^\fm(\bfs)=0$ by Lemma~\ref{lem:Charlton}.
Consequently, the unramified term \eqref{equ:ramMtV} cannot be canceled. This completes the proof of the theorem.
\end{proof}

\section{Concluding remarks}
In this paper, we have applied Brown's motivic MZV theory and Glanois's descent theory of Euler sums, further developed by Murakami and Charlton, to study the ramified and unramified motivic mixed values which are variants of the multiple zeta values of level two. We are able to extend a result of Murakami on a conjecture of Kaneko and Tsumura concerning MTVs. Murakami showed that certain conditions discovered by Kaneko and Tsumura are sufficient for MTVs to be unramified. On the motivic level, we showed in Thm.~\ref{thm:MSV} that these conditions, at least for depth less than four, are also necessary.

We are also able to generalize a result of Charlton (see Thm.~\ref{thm:unramfiedMtV-MC}(2))
concerning a family of unramified MtVs with unit components
and provide two more such families in Thm.~\ref{thm:unramfiedMtV}.
We remark that Thm.~\ref{thm:unramfiedMtV}(2) is a special case of the following more general
conjecture for which we have some quite strong numerical evidence.

\begin{conj}\label{conj:unramfiedMtV}
Let $m,n,e\in\N$ and suppose that all components of $\bfq$ are at least two.
Then the MtV $t(\{2n\}_{e},2m,\{2n\}_{e},1,\bfq)$ is unramified.
\end{conj}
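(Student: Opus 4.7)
I propose to prove the stronger motivic statement, namely that $t^\fm(\{2n\}_e, 2m, \{2n\}_e, 1, \bfq)$ is unramified; the conjecture then follows by applying the period map \eqref{equ:periodMap}. By Thm.~\ref{thm-Glanois}, it suffices to verify $D_1 t^\fm(\bfk) = 0$ and $D_r t^\fm(\bfk) \in \calL_r \otimes \calH_{|\bfk|-r}$ for every odd $r$ with $1 < r < |\bfk|$, where $\bfk = (\{2n\}_e, 2m, \{2n\}_e, 1, \bfq)$. The vanishing $D_1 t^\fm(\bfk) = 0$ is immediate from Lemma~\ref{lem:Charlton} since $\bfk$ has no unit component at either end. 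I would proceed by double induction, the outer on $e$ (with base case $e = 1$ being Thm.~\ref{thm:unramfiedMtV2}), and the inner on $\dep(\bfq)$.

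For the inductive step I would enumerate cuts of $D_r t^\fm(\bfk)$ via Thm.~\ref{thm:Charlton}, mirroring the strategy of Thms.~\ref{thm:unramfiedMtV1}--\ref{thm:unramfiedMtV3}. Cuts whose right factor lies entirely inside or beyond the $\bfq$-tail are unramified by the inner induction together with Lemma~\ref{lem:MtVall>1}, Thm.~\ref{thm:unramfiedMtV1}, Prop.~\ref{prop:unrmMtVmodProd} and Thm.~\ref{thm-distribution}. The cuts that could still produce ramified contributions are those straddling the central $2m$ or cutting across one of the two $\{2n\}_e$ blocks. The key observation is that the palindromic structure $\{2n\}_e, 2m, \{2n\}_e$ forces these dangerous cuts to appear in matched pairs of the form
\begin{equation*}
\pm \tz^\fl_c(\bfl) \otimes t^\fm(\{2n\}_{e_1}, 2m - c, \{2n\}_{e_2}, 1, \bfq)
\quad\text{and}\quad
\mp \tz^\fl_c(\bfl) \otimes t^\fm(\{2n\}_{e_2}, 2m - c, \{2n\}_{e_1}, 1, \bfq),
\end{equation*}
with $e_1 + e_2 = 2e$, together with structurally analogous pairs coming from cuts internal to a single block.

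The main new tool I would develop is a generalization of Lemma~\ref{lem:MtVdifference}: for all $e_1, e_2 \in \N_0$ with $e_1 + e_2 = 2e$ and every odd $O \ge 1$, the difference
\begin{equation*}
t^\fm(\{2n\}_{e_1}, O, \{2n\}_{e_2}, 1, \bfq) - t^\fm(\{2n\}_{e_2}, O, \{2n\}_{e_1}, 1, \bfq)
\end{equation*}
is unramified. Granted this "block-swap" lemma, the mirror-paired cuts above combine to unramified pieces, closing the induction. The swap lemma itself I would prove by yet another application of Thm.~\ref{thm-Glanois}: its $D_1$ vanishes by Lemma~\ref{lem:Charlton}, and its higher $D_r$ analysis reduces each surviving term either to a lower-weight instance of the same swap (via induction on $|\bfk|$) or to a deconcatenation summand whose ramified portion is killed by the depth-two reduction formula Cor.~\ref{cor:DBreduce}, exactly as in the proof of Lemma~\ref{lem:MtVdifference}.

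The main obstacle is the combinatorial book-keeping of these mirror cancellations in full generality. Whereas the $e = 1$ case of Thm.~\ref{thm:unramfiedMtV2} required matching only four cuts $\ncic{1}$--$\ncic{4}$, the general $e$ case produces $O(e)$ cut-types inside each $\{2n\}_e$-block that must be organized into orbits under the reflection symmetry swapping the two blocks. A further delicate point is that when the inner parameter $c$ hits $0$, the left factor $\tz^\fl_c(\bfl)$ can specialize to $\tz^\fl(1) = -\log^\fl 2$, which is ramified; such terms must be absorbed into combinations of the form $\tz^\fm(\bfl, 1) + \tz^\fm(\bfl) \log^\fm 2$ and handled by Prop.~\ref{prop:unrmMtVmodProd} rather than by Thm.~\ref{thm-distribution}. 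Tracking these boundary cases while preserving the palindromic pairing is where the bulk of the technical effort will concentrate.
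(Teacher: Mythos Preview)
The statement you are attempting is Conjecture~\ref{conj:unramfiedMtV}, which the paper explicitly leaves open: the authors prove only the case $e=1$ (Thm.~\ref{thm:unramfiedMtV2}) and offer numerical evidence for general $e$. So there is no ``paper's own proof'' to compare against, and your proposal should be read as an attempted resolution of an open problem.

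Your strategy is the natural one and correctly isolates the difficulty, but there is a genuine gap in the proposed block-swap lemma. To prove that
\[
A_{e_1,e_2}:=t^\fm(\{2n\}_{e_1},O,\{2n\}_{e_2},1,\bfq)-t^\fm(\{2n\}_{e_2},O,\{2n\}_{e_1},1,\bfq)
\]
is unramified via Thm.~\ref{thm-Glanois}, you must show $D_r A_{e_1,e_2}\in\calL_r\ot\calH_{w-r}$ for the odd $r$ equal to $(e_1+e_2)2n+O$. That deconcatenation cut (line~\eqref{eqn:dr:deconcat} of Thm.~\ref{thm:Charlton}) contributes
\[
\Big(t^\fl(\{2n\}_{e_1},O,\{2n\}_{e_2})-t^\fl(\{2n\}_{e_2},O,\{2n\}_{e_1})\Big)\ot t^\fm(1,\bfq).
\]
Since $t^\fm(1,\bfq)\in\calH^2\setminus\calH$ by Prop.~\ref{prop:MtV1s}, this term lies in $\calL_r\ot\calH_{w-r}$ only if the left factor \emph{vanishes} in $\calL_r$, i.e.\ only if
\[
t^\fl(\{2n\}_{e_1},O,\{2n\}_{e_2})=t^\fl(\{2n\}_{e_2},O,\{2n\}_{e_1})\quad\text{in }\calL.
\]
For $e_1+e_2\le1$ this is exactly Cor.~\ref{cor:DBreduce} (the depth-two identity $t^\fl(E,O)=t^\fl(O,E)=-t^\fl(E+O)$), which is what makes Lemma~\ref{lem:MtVdifference} go through. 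But for $e_1+e_2\ge2$ the left factors have depth $\ge3$, and nothing in the paper's toolkit---neither Cor.~\ref{cor:DBreduce} nor Prop.~\ref{prop:unrmMtVmodProd} nor Thm.~\ref{thm-distribution}---forces two distinct higher-depth admissible MtVs to coincide in $\calL$. Your sentence ``whose ramified portion is killed by the depth-two reduction formula Cor.~\ref{cor:DBreduce}, exactly as in the proof of Lemma~\ref{lem:MtVdifference}'' is precisely where the argument breaks: that corollary is a depth-two statement and does not extend. Establishing the needed higher-depth $\calL$-identity appears to be the heart of the conjecture, not a routine book-keeping step.

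A secondary issue: your description of the ``dangerous'' cuts as producing right factors of the form $t^\fm(\{2n\}_{e_1},2m-c,\{2n\}_{e_2},1,\bfq)$ with $e_1+e_2=2e$ is too optimistic. Cuts with $j-i\ge2$ in Thm.~\ref{thm:Charlton} absorb several $2n$'s into the left factor, yielding right factors with $e_1+e_2<2e$ and with shapes such as $t^\fm(\{2n\}_{a},X,\{2n\}_{b},1,\bfq)$ or $t^\fm(\{2n\}_{a},X,1,\bfq)$ that are not covered by your swap statement as written. The induction would therefore have to be organised over a larger family of auxiliary differences than the single swap you propose.
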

We further provide a family of ramified motivic MtVs in Thm.~\ref{thm:ramfiedMtV}.

Furthermore, we are able to describe completely when a motivic MSV is ramified if its depth is bounded by three in Thm.~\ref{thm:MSV}.
We also numerically searched for possible unramified MSVs of depth greater than three and found
\begin{equation*}
S(2,1,1,1,4)=\tfrac{3577}{768}\zeta(9)+\tfrac{343}{384}\zeta(3)^3-\tfrac{63}{64}\zeta(2)^3\zeta(3)-\tfrac{217}{640}\zeta(2)^2\zeta(5)
\end{equation*}
seems to be the only one when the weight is at most 15. Charlton kindly informed us that using the MZV data mine \cite{BlumleinBrVe2010} he checked that $S(2,1,1,1,4)$ is indeed the only unramified MSVs of depth greater than three if the weight is less than 12. We now end our paper with the following questions.

\begin{prob}\label{prob:unramfiedMSVandMtV}
Suppose the depth of $\bfk$ is at least four.
 \begin{enumerate}
\item [\upshape{(1)}] Is $S(\bfk)$ unramified only when $\bfk=(2,1,1,1,4)$?

\item [\upshape{(2)}] Is $T(\bfk)$ unramified for any $\bfk$ with depth greater than 4?

\item [\upshape{(3)}] Does Thm.~\ref{thm:unramfiedMtV}(1) and Conjecture~\ref{conj:unramfiedMtV}
exhaust all unramified MtV $t(\bfk)$ where $\bfk$ has exactly one unit component?

\item [\upshape{(4)}] Does Thm.~\ref{thm:unramfiedMtV}(3) exhaust all unramified MtV $t(\bfk)$ where $\bfk$ has exactly two unit components?

\item [\upshape{(5)}] Is there any unramified MtV $t(\bfk)$ where $\bfk$ has at least three unit components?
\end{enumerate}
\end{prob}

We want to point out that due the appearance of a high depth unramified multiple $S$-value $S(2,1,1,1,4)$, it might be very interesting to investigate high depth multiple $T$-value and see if Kaneko--Tsumura conjecture still holds.

\medskip
{\bf Acknowledgments.} Ce Xu is supported by the General Program of Natural Science Foundation of Anhui Province (Grant No. 2508085MA014). Jianqiang Zhao is supported by the Jacobs Prize from The Bishop's School. Both authors would like to thank Prof.~F.~Xu at the Capital Normal University and Prof.~C.~Bai at the Chern Institute of Mathematics for their warm hospitality, and Dr.~S.~Charlton for his helpful comments on an early version of the paper.

\bigskip
\noindent
\textbf{Statement of conflict of interest.} The authors claim that there is no conflict of interest.

\medskip
\noindent
\textbf{Data availability statement.} Our manuscript has no associated data.

\end{document}